\DeclareMathOperator{\UP}{UP}
\DeclareMathOperator{\lh}{lh}
\begin{document}

\newcommand{\s}{\sigma}
\newcommand{\al}{\alpha}
\newcommand{\om}{\omega}
\newcommand{\be}{\beta}
\newcommand{\la}{\lambda}
\newcommand{\vp}{\varphi}

\newcommand{\bo}{\mathbf{0}}
\newcommand{\bone}{\mathbf{1}}

\newcommand{\sse}{\subseteq}
\newcommand{\contains}{\supseteq}
\newcommand{\forces}{\Vdash}

\newcommand{\FIN}{\mathrm{FIN}}
\newcommand{\Fin}{\mathrm{Fin}}
\newcommand{\fin}{\mathrm{fin}}

\newcommand{\fa}{\mathfrak{a}}

\newcommand{\ve}{\vee}
\newcommand{\w}{\wedge}
\newcommand{\bv}{\bigvee}
\newcommand{\bw}{\bigwedge}
\newcommand{\bcup}{\bigcup}
\newcommand{\bcap}{\bigcap}

\newcommand{\tre}{\trianglerighteq}
\newcommand{\tr}{\vartriangleright}
\newcommand{\tle}{\trianglelefteq}
\newcommand{\tl}{\vartriangleleft}

\newcommand{\rgl}{\rangle}
\newcommand{\lgl}{\langle}
\newcommand{\lr}{\langle\ \rangle}
\newcommand{\re}{\restriction}

\newcommand{\bB}{\mathbb{B}}
\newcommand{\bP}{\mathbb{P}}
\newcommand{\bR}{\mathbb{R}}
\newcommand{\bW}{\mathbb{W}}
\newcommand{\bX}{\mathbb{X}}
\newcommand{\bN}{\mathbb{N}}
\newcommand{\bQ}{\mathbb{Q}}
\newcommand{\bS}{\mathbb{S}}
\newcommand{\St}{\tilde{S}}
   
\newcommand{\sd}{\triangle}
\newcommand{\cl}{\prec}
\newcommand{\cle}{\preccurlyeq}
\newcommand{\cg}{\succ}
\newcommand{\cge}{\succcurlyeq}
\newcommand{\dom}{\mathrm{dom}}
\newcommand{\ran}{\mathrm{ran}}

\newcommand{\lra}{\leftrightarrow}
\newcommand{\ra}{\rightarrow}
\newcommand{\llra}{\longleftrightarrow}
\newcommand{\Lla}{\Longleftarrow}
\newcommand{\Lra}{\Longrightarrow}
\newcommand{\Llra}{\Longleftrightarrow}
\newcommand{\rla}{\leftrightarrow}
\newcommand{\lora}{\longrightarrow}
\newcommand{\E}{\mathrm{E}}
\newcommand{\rank}{\mathrm{rank}}
\newcommand{\lefin}{\le_{\mathrm{fin}}}
\newcommand{\Ext}{\mathrm{Ext}}
\newcommand{\lelex}{\le_{\mathrm{lex}}}
\newcommand{\depth}{\mathrm{depth}}

\newcommand{\Erdos}{Erd{\H{o}}s}
\newcommand{\Pudlak}{Pudl{\'{a}}k}
\newcommand{\Rodl}{R{\"{o}}dl}
\newcommand{\Proml}{Pr{\"{o}}ml}
\newcommand{\Fraisse}{Fra{\"{i}}ss{\'{e}}}
\newcommand{\Sokic}{Soki{\'{c}}}
\newcommand{\Nesetril}{Ne{\v{s}}et{\v{r}}il}

\newtheorem{thm}{Theorem}  
\newtheorem{prop}[thm]{Proposition} 
\newtheorem{lem}[thm]{Lemma} 
\newtheorem{cor}[thm]{Corollary} 
\newtheorem{fact}[thm]{Fact}    
\newtheorem{facts}[thm]{Facts}      
\newtheorem*{thmMT}{Main Theorem}
\newtheorem*{thmMTUT}{Main Theorem for $\vec{\mathcal{U}}$-trees}
\newtheorem*{thmnonumber}{Theorem}
\newtheorem*{mainclaim}{Main Claim}
\newtheorem{claim}{Claim}
\newtheorem*{claim1}{Claim $1$}
\newtheorem*{claim2}{Claim $2$}
\newtheorem*{claim3}{Claim $3$}
\newtheorem*{claim4}{Claim $4$}

\theoremstyle{definition}   
\newtheorem{defn}[thm]{Definition} 
\newtheorem{example}[thm]{Example} 
\newtheorem{conj}[thm]{Conjecture} 
\newtheorem{prob}[thm]{Problem} 
\newtheorem{examples}[thm]{Examples}
\newtheorem{question}[thm]{Question}
\newtheorem{problem}[thm]{Problem}
\newtheorem{openproblems}[thm]{Open Problems}
\newtheorem{openproblem}[thm]{Open Problem}
\newtheorem{conjecture}[thm]{Conjecture}
\newtheorem*{problem1}{Problem 1}
\newtheorem*{problem2}{Problem 2}
\newtheorem*{problem3}{Problem 3}
\newtheorem*{notation}{Notation}

\theoremstyle{remark} 
\newtheorem*{rem}{Remark} 
\newtheorem*{rems}{Remarks} 
\newtheorem*{ack}{Acknowledgments} 
\newtheorem*{note}{Note}
\newtheorem{claimn}{Claim}
\newtheorem{subclaim}{Subclaim}
\newtheorem*{subclaimnn}{Subclaim}
\newtheorem*{subclaim1}{Subclaim (i)}
\newtheorem*{subclaim2}{Subclaim (ii)}
\newtheorem*{subclaim3}{Subclaim (iii)}
\newtheorem*{subclaim4}{Subclaim (iv)}
\newtheorem{case}{Case}
 \newtheorem*{case1}{Case 1}
\newtheorem*{case2}{Case 2}
\newtheorem*{case3}{Case 3}
\newtheorem*{case4}{Case 4}

\newcommand{\noprint}[1]{\relax}
\newenvironment{nd}{\noindent\color{red}ND: }{}
\newenvironment{jsm}{\noindent\color{blue}SM: }{}
%To get rid of AB comments, replace {ab} below (not above) this line
%by {comment}\noprint1 and 
%observe that the replacement is reversible.  

\title[Infinite dimensional Ellentuck spaces]{Infinite dimensional  Ellentuck spaces and Ramsey-classification theorems}
\author{Natasha Dobrinen}
\address{Department of Mathematics\\
  University of Denver \\
   2280 S Vine St\\ Denver, CO \ 80208 U.S.A.}
\email{natasha.dobrinen@du.edu}
\urladdr{\url{http://web.cs.du.edu/~ndobrine}} 
\thanks{This work  was  supported by
 National Science Foundation Grant DMS-1301665}

\keywords{Ramsey theory, infinite dimensional Ellentuck space, uniform barrier, canonical equivalence relation, ultrafilter}

\subjclass[2010]{03E05, 03E02,  05D10}

\begin{abstract}
We extend the hierarchy of finite-dimensional Ellentuck spaces  to infinite dimensions.
Using uniform barriers $B$ on $\omega$ as the prototype structures, we construct  a class of continuum many topological Ramsey spaces $\mathcal{E}_B$ which are Ellentuck-like in nature,
and form a linearly ordered hierarchy under projection.
We prove new Ramsey-classification theorems for equivalence relations on fronts, and hence also on barriers, on the spaces $\mathcal{E}_B$, extending the \Pudlak-\Rodl\ Theorem for barriers on the Ellentuck space.

The inspiration for these spaces comes from  continuing the iterative construction of the forcings $\mathcal{P}([\om]^k)/\Fin^{\otimes k}$  to the countable transfinite.
The $\sigma$-closed partial order  $(\mathcal{E}_B, \sse^{\Fin^{B}})$ is  forcing equivalent to 
$\mathcal{P}(B)/\Fin^{B}$,
 which forces a non-p-point  ultrafilter $\mathcal{G}_B$. 
 The present work forms the basis for 
further work classifying the Rudin-Keisler and Tukey structures for  the hierarchy of the generic ultrafilters $\mathcal{G}_B$.
\end{abstract}

%\begin{keyword}Ellentuck space \sep Ramsey theory \sep
%ultrafilter \sep Tukey \sep cofinal map\sep finitary %approximation maps \sep partition property
%\MSC Primary: 54D80 \sep 03E04; Secondary:  03E05 
%\end{keyword}
%\end{frontmatter}

\maketitle

%******************************************************
%****************************************************
%***************************************************

\section{Overview}\label{sec.overview}

Extending work  in \cite{DobrinenJSL15}, we construct  a new class of topological Ramsey spaces which are structurally based on uniform barriers.
The inspiration for these  spaces comes from the continuation of the iterative construction of the Boolean algebras  $\mathcal{P}(\om^k)/\Fin^{\otimes k}$ ($1\le k<\om$) to the countable transfinite.
These spaces form a natural hierarchy in complexity over the Ellentuck space and the finite-dimensional Ellentuck spaces of  \cite{DobrinenJSL15} from several viewpoints.
First,
 whenever $B,C$ are uniform barriers on $\om$ with rank of $C$ less than rank of $B$, 
then $\mathcal{E}_B$  projects to the space $\mathcal{E}_C$.
Second,
any restriction of  the members of a space $\mathcal{E}_B$ to the extensions of some fixed finite initial segment in some member of the space yields an
 isomorphic copy of one of the  lower-dimensional Ellentuck spaces that were used to form $\mathcal{E}_B$.
This is one of the justifications of the terminology
infinite-dimensional Ellentuck space.
Third, for any uniform barriers $B$ and $C$, one of  $\mathcal{E}_B$ and $\mathcal{E}_C$  projects to the other,  modulo possibly restricting below some $X\in\mathcal{E}_B$ and some $Y\in\mathcal{E}_C$.

The three main contributions of this work are the construction of the spaces $\mathcal{E}_B$, the proof that they are indeed topological Ramsey spaces, and the Ramsey-classification Theorem canonizing equivalence relations on fronts on $\mathcal{E}_B$ in terms of projection maps to subtrees which are unique among projection maps satisfying a certain natural property. 
The  Ramsey-classification Theorem can be seen as the natural extension of the \Pudlak-\Rodl\ Theorem in \cite{Pudlak/Rodl82} to these spaces, the \Pudlak-\Rodl\ Theorem, and moreover the Ramsey-classification Theorems for the finite-dimensional Ellentuck spaces in \cite{DobrinenJSL15}, being recovered from our theorem via a projection map.

\section{Introduction}\label{sec.intro}

The Infinite Ramsey Theorem  states that given any positive integers $k$ and  $l$ and any coloring $c$ of  the subsets of the natural numbers of size $k$ into $l$ many colors, there is an infinite set $M$ of natural numbers such that 
 $c$ is constant on   the subsets of $M$ of size $k$  (see \cite{Ramsey29}).
Nash-Williams extended this  to finite colorings of  any  barrier  in \cite{NashWilliams65}.
A finite coloring of a barrier  codes a partition of the Baire space into finitely many clopen sets.
Galvin and Prikry extended the Nash-Williams Theorem to Borel partitions of the Baire space in \cite{Galvin/Prikry73}, and Silver extended it to analytic partitions in \cite{Silver70}, though by non-constructive methods.

Shortly thereafter, Ellentuck  found the optimal Ramsey theorem for finite partitions of the Baire space. 
He captured   the essence of the Ramsey property 
by  equipping
 the Baire space with a  topology finer than the metric topology,  now  called the Ellentuck topology.
In \cite{Ellentuck74},  Ellentuck proved  
 that a subset of the Baire space is  Ramsey  (see Definition \ref{defn.5.2}) if and only if it has the property of Baire in the Ellentuck topology, thus 
providing a constructive proof of Silver's Theorem and moreover 
 extending extending it to all  partitions of the Baire space into finitely many sets each of which has the property of Baire in the Ellentuck topology.

Independently and around the same time,  Louveau   developed ultra-Ramsey methods using a topology similar to Ellentuck's but with infinite sets in a fixed Ramsey ultrafilter, and 
proved the analogue of Ellentuck's Theorem in the ultra-Ramsey setting (see \cite{Louveau74}).
His work  provided a topological proof of Silver's Theorem 
 a theorem of Mathias in \cite{Mathias77} in the ultra-Ramsey setting.
An ultrafilter is {\em Ramsey} if for any finite coloring of the pairs of natural numbers, there is a member of the ultrafilter on which the coloring is homogeneous.
Ramsey ultrafilters are closely connected with the Ellentuck space.
Assuming some axiom in addition to the standard axioms of set theory, for instance the Continuum Hypothesis, Martin's Axiom, or even less ($\mathfrak{p}=\mathfrak{c}$), or by the method of forcing,
one can construct a Ramsey ultrafilter using the Boolean algebra $\mathcal{P}(\om)/\Fin$.
Equivalently, one may use the Ellentuck space partially ordered by almost inclusion.
This sets the stage for later connections between topological Ramsey spaces and ultrafilters satisfying partition properties.

The Ellentuck space is the quintessential example of the more general notion of  a topological Ramsey space (see Definition \ref{defn.5.2}).
Classic examples of topological Ramsey spaces include the Carlson-Simpson space in \cite{Carlson/Simpson84} of equivalence relations on the natural numbers with infinitely many equivalence classes, and the Milliken space of infinite block sequences (see \cite{Milliken75})  which has proved fundamental for progress in certain areas of Banach space theory.
Building on prior work of Carlson and Simpson,
Todorcevic distilled four axioms which, when satisfied, guarantee that a space is a toplogical Ramsey space (see Section \ref{sec.reviewtRs}).
This axiomatic approach to topological Ramsey spaces paved the way for recent work involving connections between ultrafilters satisfying partition properties and constructions of  new topological Ramsey spaces (see \cite{Dobrinen/Todorcevic14}, \cite{Dobrinen/Todorcevic15},
\cite{Dobrinen/Mijares/Trujillo14}, and \cite{DobrinenJSL15}).

Similar to the connection between the Ellentuck space and Ramsey ultrafilters,  each topological Ramsey space has associated ultrafilters, which 
are `selective' or `Ramsey' with respect to the space (see \cite{Mijares07} and  \cite{Trujillo15}).
Using the Continuum Hypothesis or Martin's Axiom (indeed, less is actually required) or forcing, any topological Ramsey space gives rise to an associated ultrafilter satisfying some partition properties, dependent on the space.

One current research program of ours  involves finding the essential Ramsey-like structures inside  partial orderings which are responsible for the partition properties of the ultrafilters that they construct.
The motivation is as follows.
Given a partial order which  constructs an ultrafilter with certain partition properties, if one can  show that  the partial order contains a topological Ramsey space as a dense subset, then 
one gains much machinery and a fine-tuned approach to investigations of 
 the properties of the ultrafilter.
To begin with,  each topological Ramsey space satisfies its version of the Abstract Ellentuck Theorem and the Abstract Nash-Williams Theorem 
(see Theorems \ref{thm.AET} and \ref{thm.ANW}).
The structure of the members of the topological Ramsey space  provide insight  and these theorems streamline proofs involving 
 finding the Ramsey numbers for the associated ultrafilters.
Further,  the structure
 sets the stage for  canonizing equivalence relations on fronts and barriers,  yielding analogues of the \Pudlak-\Rodl\ theorem  in \cite{Pudlak/Rodl82}.
This in turn makes it possible to find the exact Rudin-Keisler and Tukey structures below the ultrafilters.

Todorcevic was the first to notice the connection between canonical equivalence relations on fronts and the Tukey type of a Ramsey ultrafilter.
Tukey reduction on ultrafilters  is a generalization of Rudin-Keisler reduction 
and has been widely studied in \cite{Milovich08}, \cite{Dobrinen/Todorcevic11}, \cite{Milovich12}, \cite{Raghavan/Todorcevic12},
\cite{Dobrinen/Todorcevic14}, \cite{Dobrinen/Todorcevic15},
\cite{Dobrinen/Mijares/Trujillo14}, 
\cite{Blass/Dobrinen/Raghavan15}, and \cite{DobrinenJSL15}.
An ultrafilter $\mathcal{V}$ is {\em Tukey reducible} to an ultrafilter $\mathcal{U}$ if and only if there is a function $f:\mathcal{U}\ra\mathcal{V}$ which sends  each filter base of $\mathcal{U}$ to a filter base of $\mathcal{V}$.
Since the Tukey theory of ultrafilters  is not the focus of the results in this paper, we merely mention its strong motivating force in the current work without going into detail.
For more background on Tukey theory of ultrafilters, the reader is referred to our survey paper \cite{DobrinenTukeySurvey15}.
In \cite{Raghavan/Todorcevic12}, Todorcevic proved that the Tukey type of any Ramsey ultrafilter is minimal  via a judicious application of
 the \Pudlak-\Rodl\ Theorem on the Ellentuck space.
In \cite{Dobrinen/Todorcevic14} and \cite{Dobrinen/Todorcevic15}, Dobrinen and Todorcevic constructed a new class of  topological Ramsey spaces which are dense in the partial orderings  of Laflamme  in \cite{Laflamme89} for ultrafilters with weaker partition properties.  By proving and applying Ramsey-classification theorems, they found the exact initial  Rudin-Keisler and Tukey structures for this class of ultrafilters.
In \cite{Dobrinen/Mijares/Trujillo14}, Dobrinen, Mijares, and Trujillo developed a template for constructions of topological Ramsey spaces, including spaces which generate the $k$-arrow ultrafilters of Baumgartner and Taylor in \cite{Baumgartner/Taylor78}; they proved broad class of Ramsey-classification theorems and applied them to find exact Rudin-Keisler and Tukey structures of the associated ultrafilters.
Each of the new topological Ramsey spaces mentioned above has only finitely many components extending each node, and thus  cannot be thought of as 
a generalizing the Ellentuck space.

The work in this paper builds a new class of continuum many  infinite dimensional Ellentuck spaces.  
This extends our previous work in \cite{DobrinenJSL15} on finite dimensional Ellentuck spaces.
That paper was motivated by 
  \cite{Blass/Dobrinen/Raghavan15},  in which Blass, Dobrinen, and Raghavan initiated the study of 
the Tukey type of the generic ultrafilter $\mathcal{G}_2$ forced by $\mathcal{P}(\om\times\om)/\Fin\otimes\Fin$, but did not find the exact Tukey structure below $\mathcal{G}_2$. 
(Hrusak and Verner also considered this forcing in  \cite{Hrusak/Verner11}.)
In \cite{DobrinenJSL15}, the author developed a hierarchy of spaces $\mathcal{E}_k$ extending the Ellentuck space to every finite dimension $k$.
The  space $\mathcal{E}_k$  forms  dense subsets of the partial order $\mathcal{P}(\om^k)/\Fin^{\otimes k}$   distilling the  internal  structure  responsible for its  forcing and Ramsey properties.
We proved Ramsey-classification theorems for the $\mathcal{E}_k$ spaces and applied them to find that indeed, the initial Rudin-Keisler and Tukey structures below $\mathcal{G}_k$ is  simply  a linear order of size $k$ (see Theorems 40 and 41 in \cite{DobrinenJSL15}).

The inspiration for the the infinite-dimensional Ellentuck  spaces comes from the continuation of the iterative construction of the Boolean algebras  $\mathcal{P}(\om^k)/\Fin^{\otimes k}$ to the countable transfinite.
The underlying structure of these spaces $\mathcal{E}_B$ are based on   uniform barriers on $\om$, which also give the rank of the space.
These spaces form a natural hierarchy in complexity over the Ellentuck space and the hierarchy of finite dimensional Ellentuck spaces in \cite{DobrinenJSL15} space from several viewpoints.
First,
 whenever $B,C$ are uniform barriers on $\om$ with rank of $C$ less than rank of $B$, 
then $\mathcal{E}_B$ projects to $\mathcal{E}_C$.
Second,
as seen in Fact \ref{fact.important}, if we 
fix any finite initial segment of an element of a  member of the space and restrict the space to those members that extend it, then we obtain an isomorphic copy of one of the  lower-dimensional Ellentuck spaces that was used to form $\mathcal{E}_B$.
Lastly, for any uniform barriers $B$ and $C$,  we can always find members $X\in\mathcal{E}_B$ and $Y\in\mathcal{E}_C$ so that 
one of  $\mathcal{E}_B|X$ and $\mathcal{E}_Y$ projects to the other (see Fact \ref{fact.EBCemb}).

Associated to each $\mathcal{E}_B$ is the 
  $\sigma$-closed forcing $\mathcal{P}(B)/\Fin^{B}$, which forces a non-p-point  ultrafilter $\mathcal{G}_B$ (see just below Fact \ref{fact.2barrierscompare} in Section \ref{sec.FinB}).  
Like the spaces $\mathcal{E}_B$, these ultrafilters form a hierarchy, in that whenever $C$ is obtained by a projection of $B$, then $\mathcal{G}_C$ is Rudin-Keisler reducible to $\mathcal{G}_B$.
The Ramsey-classification Theorem \ref{thm.rc} canonizing equivalence relations on fronts on the spaces $\mathcal{E}_B$ will be applied
in forthcoming work to find the Rudin-Keisler and Tukey structures below the ultrafilters $\mathcal{G}_B$.  
For now, we will only  mention that, for infinite rank  $B$, we have proved that there are continuum sized linear orders in  both the Rudin-Keisler and  Tukey structures of ultrafilters   reducible to $\mathcal{G}_B$.
This  contrasts with all  previous related works which only yielded countable initial Tukey structures.

The paper is organized as follows.
Section \ref{sec.reviewtRs} provides the necessary  background on topological Ramsey spaces from Todorcevic's book \cite{TodorcevicBK10} as well as a review of the Ellentuck space and the \Pudlak-\Rodl\ Theorem.
Section \ref{sec.FinB} begins with a review of uniform barriers and then carries out the recursive construction of the $\sigma$-closed ideals $\Fin^B$, $B$ a uniform barrier, and the Boolean algebras $\mathcal{P}(B)/\Fin^B$.
This section also includes a brief discussion of the ultrafilters constructed by $\mathcal{P}(B)/\Fin^B$ and  some basic facts about their Tukey and Rudin-Keisler structures, forming the backdrop to a sequel paper.

The   infinite dimensional Ellentuck spaces $\mathcal{E}_B$   are defined in 
Definition \ref{def.E_B}  in
 Section \ref{sec.defR}. 
These spaces are constructed recursively on the rank of the barrier $B$, via 
defining the legitimate domains (Definitions \ref{defn.prec} and \ref{defn.domainspace}) so as to ensure that spaces of smaller rank embed into spaces of larger rank, and in a way that the space restricted above any fixed stem is isomorphic to a space of smaller rank (see Fact \ref{fact.important}).
In one of the main theorems of this paper,  Theorem \ref{thm.E_BtRs}, we prove that for each uniform barrier $B$, the space $\mathcal{E}_B$ is a topological Ramsey space.

In Section \ref{sec.canoneqrelAE1}, 
Theorems \ref{thm.CanonicalonAE_1} and \ref{thm.canonAR1}
prove the existence of canonical equivalence relations on first approximations and on 1-extensions.  These are given in terms of uniform projections, which are similar in  form to uniform barriers and have a notion of rank.
Thus, there are continuum many canonical equivalence relations on first approximations,
in contrast to the finite-dimensional Ellentuck spaces which have only finitely many such canonical equivalence relations.
These theorems  
lay the groundwork for the Ramsey-classification theorems in the next section.

In Section \ref{sec.rct} we prove  the  Ramsey-classification Theorem \ref{thm.rc}
for each $\mathcal{E}_B$.
In words, it says that each equivalence relation on a front $\mathcal{F}$ on the space $\mathcal{E}_B$ is canonized by an inner Nash-Williams  function  which projects each member $u$ of $\mathcal{F}$ to some subtree of $u$.
Further, this projection function   uniquely canonizes the equivalence relation
among those inner functions which satisfy 
the property $(*)$ (see Definition \ref{def.innerNW}
and Theorem \ref{thm.irred}).
 Though the proof of Theorem \ref{thm.rc} follows the standard outline of the analogous theorems in \cite{Dobrinen/Todorcevic14}, \cite{Dobrinen/Todorcevic15}, \cite{Dobrinen/Mijares/Trujillo14}, and \cite{DobrinenJSL15},
the proof methods are  quite different, involving  new ideas central to the structure of the spaces $\mathcal{E}_B$.

%******************************************************
%****************************************************
%***************************************************

\section{Basics of   topological Ramsey spaces}\label{sec.reviewtRs}

A brief review of topological Ramsey spaces is provided in this section for the reader's convenience.
Building on prior work of Carlson and Simpson in \cite{Carlson/Simpson90}, Todorcevic distilled  key properties of the Ellentuck space into  four axioms, \bf A.1  \rm -  \bf A.4\rm, which guarantee that a space is a topological Ramsey space.
(For further background, the reader  is referred to Chapter 5 of \cite{TodorcevicBK10}.)
The  axioms \bf A.1  \rm -  \bf A.4 \rm
are defined for triples
$(\mathcal{R},\le,r)$
of objects with the following properties:
$\mathcal{R}$ is a nonempty set,
$\le$ is a quasi-ordering on $\mathcal{R}$,
 and $r:\mathcal{R}\times\om\ra\mathcal{AR}$ is a  map producing the sequence $(r_n(\cdot)=r(\cdot,n))$ of  restriction maps, where
$\mathcal{AR}$ is  the collection of all finite approximations to members of $\mathcal{R}$.
For $u\in\mathcal{AR}$ and $X,Y\in\mathcal{R}$,
\begin{equation}
[u,X]=\{Y\in\mathcal{R}:Y\le X\mathrm{\ and\ }(\exists n)\ r_n(Y)=u\}.
\end{equation}

For $u\in\mathcal{AR}$, let $|u|$ denote the length of the sequence $u$.  Thus, $|u|$ equals the integer $k$ for which $u=r_k(u)$.
For $u,v\in\mathcal{AR}$, $u\sqsubseteq v$ if and only if $u=r_m(v)$ for some $m\le |v|$.
$u\sqsubset v$ if and only if $u=r_m(v)$ for some $m<|v|$.
For each $n<\om$, $\mathcal{AR}_n=\{r_n(X):X\in\mathcal{R}\}$.
\vskip.1in

\begin{enumerate}
\item[\bf A.1]\rm
\begin{enumerate}
\item[(1)]
$r_0(X)=\emptyset$ for all $X\in\mathcal{R}$.\vskip.05in
\item[(2)]
$X\ne Y$ implies $r_n(X)\ne r_n(Y)$ for some $n$.\vskip.05in
\item[(3)]
$r_m(X)=r_n(Y)$ implies $m=n$ and $r_k(X)=r_k(X)$ for all $k<n$.\vskip.1in
\end{enumerate}
\item[\bf A.2]\rm
There is a quasi-ordering $\le_{\mathrm{fin}}$ on $\mathcal{AR}$ such that\vskip.05in
\begin{enumerate}
\item[(1)]
$\{v\in\mathcal{AR}:v\le_{\mathrm{fin}} u\}$ is finite for all $u\in\mathcal{AR}$,\vskip.05in
\item[(2)]
$Y\le X$ iff $(\forall n)(\exists m)\ r_n(Y)\le_{\mathrm{fin}} r_m(X)$,\vskip.05in
\item[(3)]
$\forall u,v,y\in\mathcal{AR}[y\sqsubset v\wedge v\le_{\mathrm{fin}} u\ra\exists x\sqsubset u\ (y\le_{\mathrm{fin}} x)]$.\vskip.1in
\end{enumerate}
\end{enumerate}

The number $\depth_X(u)$ is the least $n$, if it exists, such that $u\le_{\mathrm{fin}}r_n(X)$.
If such an $n$ does not exist, then we write $\depth_X(u)=\infty$.
If $\depth_X(u)=n<\infty$, then $[\depth_X(u),X]$ denotes $[r_n(X),X]$.

\begin{enumerate}
\item[\bf A.3] \rm
\begin{enumerate}
\item[(1)]
If $\depth_X(u)<\infty$ then $[u,Y]\ne\emptyset$ for all $Y\in[\depth_X(u),X]$.\vskip.05in
\item[(2)]
$Y\le X$ and $[u,Y]\ne\emptyset$ imply that there is $Y'\in[\depth_X(u),X]$ such that $\emptyset\ne[u,Y']\sse[u,Y]$.\vskip.1in
\end{enumerate}
\end{enumerate}

Additionally, 
for $n>|u|$, let  $r_n[u,X]$  denote the collection of all $v\in\mathcal{AR}_n$ such that $u\sqsubset v$ and $v\le_{\mathrm{fin}} X$.

\begin{enumerate}
\item[\bf A.4]\rm
If $\depth_X(u)<\infty$ and if $\mathcal{O}\sse\mathcal{AR}_{|u|+1}$,
then there is $Y\in[\depth_X(u),X]$ such that
$r_{|u|+1}[u,Y]\sse\mathcal{O}$ or $r_{|u|+1}[u,Y]\sse\mathcal{O}^c$.\vskip.1in
\end{enumerate}

The  {\em Ellentuck topology} on $\mathcal{R}$ is the topology generated by the basic open sets
$[u,X]$;
it refines the  metric topology on $\mathcal{R}$,  considered as a subspace of the Tychonoff cube $\mathcal{AR}^{\bN}$.
Given the Ellentuck topology on $\mathcal{R}$,
the notions of nowhere dense, and hence of meager are defined in the natural way.
We  say that a subset $\mathcal{X}$ of $\mathcal{R}$ has the {\em property of Baire} iff $\mathcal{X}=\mathcal{O}\cap\mathcal{M}$ for some Ellentuck open set $\mathcal{O}\sse\mathcal{R}$ and Ellentuck meager set $\mathcal{M}\sse\mathcal{R}$.

\begin{defn}[\cite{TodorcevicBK10}]\label{defn.5.2}
A subset $\mathcal{X}$ of $\mathcal{R}$ is {\em Ramsey} if for every $\emptyset\ne[u,X]$,
there is a $Y\in[u,X]$ such that $[u,Y]\sse\mathcal{X}$ or $[u,Y]\cap\mathcal{X}=\emptyset$.
$\mathcal{X}\sse\mathcal{R}$ is {\em Ramsey null} if for every $\emptyset\ne [u,X]$, there is a $Y\in[u,X]$ such that $[u,Y]\cap\mathcal{X}=\emptyset$.

A triple $(\mathcal{R},\le,r)$ is a {\em topological Ramsey space} if every subset of $\mathcal{R}$  with the property of Baire  is Ramsey and if every meager subset of $\mathcal{R}$ is Ramsey null.
\end{defn}

The following result can be found as Theorem
5.4 in \cite{TodorcevicBK10}.

\begin{thm}[Abstract Ellentuck Theorem]\label{thm.AET}\rm \it
If $(\mathcal{R},\le,r)$ is closed (as a subspace of $\mathcal{AR}^{\bN}$) and satisfies axioms {\bf A.1}, {\bf A.2}, {\bf A.3}, and {\bf A.4},
then every  subset of $\mathcal{R}$ with the property of Baire is Ramsey,
and every meager subset is Ramsey null;
in other words,
the triple $(\mathcal{R},\le,r)$ forms a topological Ramsey space.
\end{thm}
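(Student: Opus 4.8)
The plan is to carry out the Galvin--Prikry--Ellentuck argument at the level of the axioms A.1--A.4, via a combinatorial forcing whose two engines are a fusion (diagonalization) construction and the one-step pigeonhole A.4, and then to deduce the full statement from the Baire-property decomposition of an arbitrary set.

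For the core I would fix $\mathcal{X}\sse\mathcal{R}$ and, for $u\in\mathcal{AR}$ and $X\in\mathcal{R}$ with $\depth_X(u)<\infty$, introduce the usual notions: $X$ \emph{accepts} $u$ if $[u,X]\sse\mathcal{X}$; $X$ \emph{rejects} $u$ if no $Y\in[\depth_X(u),X]$ accepts $u$; $X$ \emph{decides} $u$ if it accepts or rejects $u$. By definition, below any $X$ some $Y\le X$ decides a given $u$. First I would run a fusion: using A.2(1) to see that at each level only finitely many approximations need attention, A.3 to amalgamate the resulting conditions while keeping them of full depth above $u$, and the closedness of $\mathcal{R}$ in $\mathcal{AR}^{\bN}$ to take the limit, this produces for each $[u,X]$ a $Y\in[\depth_X(u),X]$ deciding every $v\sqsupseteq u$ with $[v,Y]\ne\emptyset$. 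Second I would prove the pigeonhole propagation: if $X$ rejects $u$, there is $X'\in[\depth_X(u),X]$ rejecting every one-step extension $v\in r_{|u|+1}[u,X']$; this is exactly where A.4 is applied, to the set of one-step extensions of $u$ accepted by some condition below $X$ --- the homogeneous $X'$ cannot land on the ``accepted'' side, since amalgamating the finitely many accepting witnesses via A.3 would yield a condition below $X$ accepting $u$, contradicting that $X$ rejects $u$. Interleaving the propagation into the fusion (pushing each rejection one level down before deciding the next level) gives a totally deciding $Y$ such that, if $Y$ rejects $u$, then $Y$ rejects every $v\sqsupseteq u$ realized below it and hence never accepts; and when $\mathcal{X}$ is open in the Ellentuck topology, any $W\in[u,Y]\cap\mathcal{X}$ lies in a basic open set contained in $\mathcal{X}$, so localizing that basic set below $Y$ via A.3 forces $Y$ to accept some $r_n(W)$ --- a contradiction. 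Thus $[u,Y]\cap\mathcal{X}=\emptyset$ in the rejecting case and $[u,Y]\sse\mathcal{X}$ in the accepting case: the depth-preserving local form of the Ramsey dichotomy.

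Running this core on Ellentuck-open sets shows each such set is (locally, depth-preservingly) Ramsey. For an Ellentuck-nowhere-dense $\mathcal{N}$ I would run it on the Ellentuck-open dense set $(\overline{\mathcal{N}})^c$: the ``rejecting'' alternative is impossible, as it would put a nonempty basic set inside $\overline{\mathcal{N}}$, so one is in the ``accepting'' case and obtains $Y\in[\depth_X(u),X]$ with $[u,Y]\cap\mathcal{N}=\emptyset$; a further fusion over a countable cover then shows every Ellentuck-meager set is locally Ramsey null, and in particular every meager set is Ramsey null. Finally, for $\mathcal{X}$ with the Baire property in the Ellentuck topology, I would write $\mathcal{X}=\mathcal{O}\,\triangle\,\mathcal{M}$ with $\mathcal{O}$ Ellentuck-open and $\mathcal{M}$ Ellentuck-meager; given $[u,X]\ne\emptyset$, first decide $\mathcal{O}$ over $u$ on some $Y_0\in[\depth_X(u),X]$, then pass to $Y_1\le Y_0$, still of full depth by A.3, with $[u,Y_1]\cap\mathcal{M}=\emptyset$; on $[u,Y_1]$ we have $\mathcal{X}=\mathcal{O}$, which is either all of $[u,Y_1]$ or empty. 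Together with the meager case this is exactly the content of Definition \ref{defn.5.2}, so $(\mathcal{R},\le,r)$ is a topological Ramsey space.

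The hard part will be the combinatorial forcing core, and in particular the pigeonhole propagation together with its interaction with the fusion: one must apply A.4 to exactly the right family of one-step extensions, amalgamate the conditions it returns without ever leaving $[\depth_X(u),X]$ (so the relevant basic sets stay nonempty by A.3(1)), localize the witnesses of openness below the fusion condition (which needs A.3(2)), and thread all of this through the level-by-level diagonalization so that the limit --- guaranteed by closedness of $\mathcal{R}$ --- is a legitimate condition that still reaches $u$ and decides everything above it. Keeping finiteness of all depths and nonemptiness of all basic sets under control throughout this construction is the real technical content; the reductions in the previous paragraph are routine once the core is in hand.
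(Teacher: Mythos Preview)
The paper does not give its own proof of this theorem; it is quoted as Theorem~5.4 from \cite{TodorcevicBK10} and used as a black box throughout. Your sketch is the standard combinatorial-forcing argument found there (accept/reject/decide, fusion via \textbf{A.2}(1) and \textbf{A.3}, pigeonhole via \textbf{A.4}, then open $\Rightarrow$ nowhere dense $\Rightarrow$ meager $\Rightarrow$ Baire), and it is essentially correct.

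One phrase is loose: in the propagation step you speak of ``amalgamating the finitely many accepting witnesses via \textbf{A.3},'' but the set of one-step extensions of $u$ is infinite. The clean version first fuses to a single $Y$ deciding every $v\sqsupseteq u$, and then applies \textbf{A.4} to $\{w\in\mathcal{AR}_{|v'|+1}: Y\text{ accepts }w\}$; if the homogeneous $Y'\in[\depth_Y(v'),Y]$ lands on the accepted side, then $[v',Y']=\bigcup_{w\in r_{|v'|+1}[v',Y']}[w,Y']\sse\bigcup_w[w,Y]\sse\mathcal{X}$ directly, so $Y'$ accepts $v'$ with no amalgamation needed. This is a cosmetic point in a sketch; your overall architecture is right.
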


\begin{defn}[\cite{TodorcevicBK10}]\label{defn.5.16}
A family $\mathcal{F}\sse\mathcal{AR}$ of finite approximations is
\begin{enumerate}
\item
{\em Nash-Williams} if  $u\ne v\in\mathcal{F}$ implies $v\not\sqsubseteq u$;
\item
{\em Ramsey} if for every partition $\mathcal{F}=\mathcal{F}_0\cup\mathcal{F}_1$ and every $X\in\mathcal{R}$,
there are $Y\le X$ and $i\in\{0,1\}$ such that $\mathcal{F}_i|Y=\emptyset$.
\end{enumerate}
\end{defn}

The Abstract Nash-Williams Theorem (Theorem 5.17  in \cite{TodorcevicBK10}), which follows from the Abstract Ellentuck Theorem,
is actually  sufficient for the proofs in Sections \ref{sec.canoneqrelAE1} and \ref{sec.rct}.

\begin{thm}[Abstract Nash-Williams Theorem]\label{thm.ANW}
Suppose $(\mathcal{R},\le,r)$ is a closed triple that satisfies {\bf A.1} - {\bf A.4}. Then
every Nash-Williams family of finite approximations is Ramsey.
\end{thm}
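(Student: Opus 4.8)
The plan is to derive this from the Abstract Ellentuck Theorem (Theorem~\ref{thm.AET}) via the standard correspondence between families of finite approximations and open subsets of $\mathcal{R}$. Fix a Nash-Williams family $\mathcal{F}\sse\mathcal{AR}$, a partition $\mathcal{F}=\mathcal{F}_0\cup\mathcal{F}_1$, and a member $X\in\mathcal{R}$. For each $i\in\{0,1\}$ I would look at the set
\[
\mathcal{X}_i=\{Z\in\mathcal{R}:(\exists n)\ r_n(Z)\in\mathcal{F}_i\}.
\]
The first step is to check that $\mathcal{X}_i$ is open in the Ellentuck topology: if $u\in\mathcal{F}_i$ and $r_{|u|}(Z)=u$, then axiom {\bf A.1}(3) forces $r_{|u|}(Z')=u$ for every $Z'\in[u,Z]$, so $[u,Z]\sse\mathcal{X}_i$; hence $\mathcal{X}_i$ is a union of basic open sets $[u,Z]$ with $u\in\mathcal{F}_i$. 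Being open, $\mathcal{X}_i$ has the property of Baire, so by Theorem~\ref{thm.AET} it is Ramsey.

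Next I would apply Ramsey-ness of $\mathcal{X}_0$ to the nonempty basic open set $[\emptyset,X]$ (note $X\in[\emptyset,X]$, using {\bf A.1}(1)): this produces $Y\le X$ with either $[\emptyset,Y]\cap\mathcal{X}_0=\emptyset$ or $[\emptyset,Y]\sse\mathcal{X}_0$. In the first case I claim $\mathcal{F}_0|Y=\emptyset$; indeed, if some $u\in\mathcal{F}_0$ satisfied $u\le_{\mathrm{fin}}Y$, then by {\bf A.3} there would be $Z\in[u,Y]\sse[\emptyset,Y]$ with $r_{|u|}(Z)=u\in\mathcal{F}_0$, contradicting $[\emptyset,Y]\cap\mathcal{X}_0=\emptyset$; so the conclusion holds with $i=0$. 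In the second case I claim instead $\mathcal{F}_1|Y=\emptyset$; if not, pick $u\in\mathcal{F}_1$ with $u\le_{\mathrm{fin}}Y$ and $Z\in[u,Y]\sse[\emptyset,Y]\sse\mathcal{X}_0$, so $r_n(Z)\in\mathcal{F}_0$ for some $n$, while also $r_{|u|}(Z)=u\in\mathcal{F}_1$. Since $\mathcal{F}_0\cap\mathcal{F}_1=\emptyset$, the two approximations $r_n(Z)$ and $r_{|u|}(Z)$ are distinct, so $n\ne|u|$, and {\bf A.1}(3) shows the shorter of them is a proper $\sqsubset$-initial segment of the longer. As both lie in $\mathcal{F}$ and are distinct, this contradicts the Nash-Williams property; hence $\mathcal{F}_1|Y=\emptyset$ and the conclusion holds with $i=1$.

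The argument is short, and the single genuinely axiom-sensitive point is the verification that each $\mathcal{X}_i$ is Ellentuck-open (so that the Abstract Ellentuck Theorem applies); everything else is the familiar open-set/approximation dictionary, together with one use of the Nash-Williams hypothesis to forbid a member of $\mathcal{R}$ from having approximations in both pieces of the partition. I do not expect a real obstacle here: the only care needed is to make ``$\mathcal{F}_i|Y=\emptyset$'' match Definition~\ref{defn.5.16} and to invoke {\bf A.3} to pass between ``$u\le_{\mathrm{fin}}Y$'' and ``$[u,Y]\ne\emptyset$''.
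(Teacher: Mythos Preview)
Your argument is correct and is precisely the standard derivation of the Abstract Nash-Williams Theorem from the Abstract Ellentuck Theorem. The paper does not give its own proof of this statement: it simply cites it as Theorem~5.17 of \cite{TodorcevicBK10} and remarks that it ``follows from the Abstract Ellentuck Theorem,'' so what you have written is exactly the content being invoked.

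One small remark on presentation: your case analysis tacitly assumes the two pieces of the partition are disjoint (you use $\mathcal{F}_0\cap\mathcal{F}_1=\emptyset$ to conclude $r_n(Z)\ne r_{|u|}(Z)$). This is the standard reading of ``partition,'' and in any event one can always replace $\mathcal{F}_1$ by $\mathcal{F}_1\setminus\mathcal{F}_0$ without loss, so nothing is lost. Otherwise the invocation of \textbf{A.3}(1) to pass from $u\le_{\mathrm{fin}}Y$ to $[u,Y]\ne\emptyset$, and of \textbf{A.1}(3) to compare $r_n(Z)$ and $r_{|u|}(Z)$ under $\sqsubseteq$, are exactly right.
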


\begin{defn}\label{def.frontR1}
Suppose $(\mathcal{R},\le,r)$ is a closed triple that satisfies {\bf A.1} - {\bf A.4}.
Let $X\in\mathcal{R}$.
A family $\mathcal{F}\sse\mathcal{AR}$ is a {\em front} on $[0,X]$ if
\begin{enumerate}
\item
For each $Y\in[0,X]$, there is a $u\in \mathcal{F}$ such that $u\sqsubset Y$; and
\item
$\mathcal{F}$ is Nash-Williams.
\end{enumerate}
\end{defn}

\begin{rem}
The  stronger notion of  {\em barrier} on a topological Ramsey space  simply replaces Nash-Williams with {\em Sperner} in (2) of Definition \ref{def.frontR1}: 
whenever $u,v\in\mathcal{F}$, $v\ne u\ra v\not\le_{\fin} u$.
For  topological Ramsey spaces 
in which the  quasi-order $\le_{\fin}$ is actually a partial order, 
each front is a barrier when restricted to some member of $\mathcal{R}$ 
(see  Corollary 5.19 in \cite{TodorcevicBK10}).
Since $\le_{\fin}$ is a partial order for all the spaces in this article,
the terms `front' and `barrier' may be used interchangeably  with no change  to our results.
\end{rem}

This section concludes by recalling the Ellentuck space and stating  the \Pudlak-\Rodl\ Theorem for canonical equivalence relations on fronts on the Ellentuck space.

\begin{defn}\label{def.Ellspace}
Let $(\mathcal{E}, \le,r)$ denote the {\em Ellentuck space} $([\om]^{\om},\sse,r)$,
where for each $X\in[\om]^{\om}$ and $n<\om$, 
$r_n(X)$ denotes the set of the $n$ least members of $X$.
\end{defn}

\begin{defn}\label{def.irredPR}
A map $\vp$ on a front $\mathcal{F}\sse [\om]^{<\om}$ on the Ellentuck space is called
\begin{enumerate}
\item
{\em inner} if for each $u\in \mathcal{F}$, 
$\vp(u)\sse u$.
\item
{\em Nash-Williams} if for all pairs $u,v\in \mathcal{F}$,
$\vp(u)\not\sqsubset \vp(v)$.
\item 
{\em irreducible} if it is inner and Nash-Williams.
\end{enumerate}
\end{defn}

\begin{thm}[\Pudlak/\Rodl,  \cite{Pudlak/Rodl82}]
Let $R$ be an equivalence relation on a front $\mathcal{F}$ on the Ellentuck space.
Then there is an irreducible map $\vp$ and an $X\in[\om]^{\om}$ such that for all $u,v\in\mathcal{F}$ with $u,v\sse X$,
\begin{equation}
u\, R\, v\ \ \llra\ \ \vp(u)=\vp(v).
\end{equation}
\end{thm}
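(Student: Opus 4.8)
The plan is to reduce to the case of a uniform barrier and then argue by transfinite induction on its rank, using the Abstract Nash--Williams Theorem (Theorem~\ref{thm.ANW}) to stabilize the equivalence relation $R$ at each stage. First I would use the fact that, since $\lefin$ is just the partial order $\sse$ on $[\om]^{<\om}$, there is $X_0\in[\om]^\om$ on which $\mathcal{F}$ restricts to a uniform barrier, of some countable rank $\al$ (cf.\ Corollary~5.19 of \cite{TodorcevicBK10} together with the classical theory of barriers on $\om$); replacing $\om$ by $X_0$ it then suffices to canonize an equivalence relation on a uniform barrier of rank $\al$, and I induct on $\al$. The base cases are easy: if $\al=0$ then the barrier is $\{\emptyset\}$ and $\vp(\emptyset)=\emptyset$ canonizes $R$; if $\al=1$ the barrier is $[X_0]^1$, and coloring a pair $\{m,n\}\in[X_0]^2$ by whether $\{m\}\,R\,\{n\}$ and passing to a homogeneous set $X$, either all singletons from $X$ are $R$-equivalent, so that $\vp\equiv\emptyset$ works, or $R$ restricted to $[X]^1$ is the identity, so that $\vp(\{n\})=\{n\}$ works; both maps are inner and Nash--Williams.

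For the inductive step with $\al>1$ I would first handle the tail barriers. For each $n$ put $\mathcal{F}_n=\{t:\{n\}\cup t\in\mathcal{F}$ and $\min t>n\}$; this is a uniform barrier of rank $<\al$, carrying the induced equivalence relation $R_n$, where $s\,R_n\,t\iff(\{n\}\cup s)\,R\,(\{n\}\cup t)$. Applying the inductive hypothesis to each $R_n$ and running a fusion argument --- at the $n$-th step of a diagonalization, thinning the current set so as to absorb the condition obtained from $\mathcal{F}_n$, while using \textbf{A.1}--\textbf{A.4} to keep each $\mathcal{F}_n$ a front with its canonizing map still valid on the final diagonal set --- I would obtain a single $X\in[\om]^\om$ and irreducible maps $\vp_n$ on $\mathcal{F}_n|X$ such that, for $u,v\in\mathcal{F}|X$ with $\min u=\min v=n$,
\[
u\,R\,v\ \llra\ \vp_n(u\setminus\{n\})=\vp_n(v\setminus\{n\}).
\]

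It remains to decide $R$ between members with distinct minima and to assemble $\vp$. Here I would apply the Abstract Nash--Williams Theorem once more to make $X$ homogeneous for the question ``do there exist $u,v\in\mathcal{F}|X$ with $\min u\ne\min v$ and $u\,R\,v$?''. On the side where no such pair occurs, I would set $\vp(u)=\{n\}\cup\vp_n(u\setminus\{n\})$ with $n=\min u$: distinct minima then already force $R$-inequivalence, so the fixed-stem biconditional above upgrades to the full one, and $\vp$ is plainly inner and Nash--Williams (an inclusion $\vp(u)\sqsubset\vp(v)$ would force equal minima and then contradict the Nash--Williams property of $\vp_n$). On the other side, I would thin $X$ further to make the tail maps $\vp_n$ mutually coherent --- so that $R$-equivalent members with different minima receive equal $\vp_n$-values and so that the union of their ranges is again a Nash--Williams family --- and set $\vp(u)=\vp_n(u\setminus\{n\})$. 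Combining the fixed-stem and cross-stem cases then yields the required biconditional.

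The hard part will be this last step: isolating the ``is the minimum relevant'' dichotomy in a form that a single Ramsey-theoretic thinning decides, and, in the case where it is not relevant, thinning $X$ so that the inductively obtained tail maps cohere across all $n$ and the assembled $\vp$ remains inner and Nash--Williams. By contrast, the fusion in the tail step and the base cases are routine given the axioms; this coherence and irreducibility bookkeeping is the technical core of the argument.
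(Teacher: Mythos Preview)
The paper does not prove this theorem: it is stated in Section~\ref{sec.reviewtRs} as a background result cited from \cite{Pudlak/Rodl82}, so there is no proof in the paper to compare against directly. The paper's own Ramsey-classification Theorem~\ref{thm.rc} does recover it as the special case $B=[\om]^1$, but by a quite different route: rather than inducting on the rank of the barrier, the paper works globally with the mixing/separating machinery of Definition~\ref{defn.mix}, stabilizes the local $1$-extension projections all at once via Lemma~\ref{lem.hered} and Claim~\ref{claim.mix1}, and then verifies that the assembled map $\vp$ canonizes $R$ and is irreducible.

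On your sketch itself: the induction-on-rank outline is a natural one, and the tail-barrier step with fusion is fine, but the cross-minimum step has a real gap. Your sentence ``apply the Abstract Nash--Williams Theorem once more to make $X$ homogeneous for the question `do there exist $u,v\in\mathcal{F}|X$ with $\min u\ne\min v$ and $u\,R\,v$?'\,'' is not a well-posed application: Theorem~\ref{thm.ANW} homogenizes finite colorings of a front, not a global existential statement about a restriction. What must actually be decided uniformly is whether the first coordinate belongs to the canonical image, and for that one colors a suitable front (roughly, configurations carrying two tails with distinct stems) by whether the corresponding members are $R$-related, and then argues from the homogeneous color --- compare Lemma~\ref{lem.miximpliesp=q} and Claim~\ref{claim.bothE_0} in the paper's proof of Theorem~\ref{thm.rc}. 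Moreover, in your ``minimum irrelevant'' branch, saying you will ``thin $X$ further to make the tail maps $\vp_n$ mutually coherent'' names the conclusion rather than the argument: you must show that after thinning the ranges of the $\vp_n$ all lie on a single front independent of $n$ and that the assembled map remains inner and Nash--Williams, and you have not indicated how. That is exactly where the work is, as you yourself flag.
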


This  theorem has been generalized to new classes of  topological Ramsey spaces in the papers \cite{Dobrinen/Todorcevic14},  \cite{Dobrinen/Todorcevic15},  \cite{Dobrinen/Mijares/Trujillo14} and \cite{DobrinenJSL15}.
In Section \ref{sec.rct}, we will extend it to the infinite dimensional Ellentuck spaces, which will be defined in Section \ref{sec.defR}.

%******************************************************
%****************************************************
%***************************************************

\section{Uniform barriers $B$, the Boolean algebras $\mathcal{P}(B)/\Fin^B$, and the generic ultrafilters $\mathcal{G}_B$}\label{sec.FinB}

This section provides  background on uniform barriers $B$ on $\om$ and our  recursive  constructions of the $\sigma$-closed ideals $\Fin^B$ on $\mathcal{P}(B)$.
These ideals are used to
extend the class of  Boolean algebras $\mathcal{P}([\om]^k)/\Fin^{\otimes k}$ in \cite{DobrinenJSL15} to 
the class of  Boolean algebras 
$\mathcal{P}(B)/\Fin^B$, for every  uniform barrier $B$ of countable ordinal rank.
Since each such Boolean algbebra has a $\sigma$-closed dense subset, forcing with it adds no new subsets of $\om$ or of $B$.
Using the Continuum Hypthothesis, Martin's Axiom, or by forcing, 
$\mathcal{P}(B)/\Fin^B$ constructs 
 a non-p-point ultrafilter, which we shall denote $\mathcal{G}_B$, satisfying some partition properties.
Investigations of the Rudin-Keisler and Tukey structures of these generic ultrafilters  motiviated the construction of the topological Ramsey spaces $\mathcal{E}_B$ and their Ramsey-classification Theorems, which will be applied to classify
their  Rudin-Keisler and Tukey structures in forthcoming work.

For $a,b\in[\om]^{<\om}$,
we shall use the notation $a\trianglelefteq b$  to denote that $a$ is an initial segment of $b$, and $a\vartriangleleft b$ to denote that $a$ is a proper initial segment of $b$.  
This will serve to 
 distinguish the partial ordering of  initial segment on $[\om]^{<\om}$ from the  partial ordering $u\sqsubseteq  v$ of  initial segment
for $u,v\in\mathcal{AR}$,
  finite approximations of members of a topological Ramsey space $\mathcal{R}$.

The following two definitions can be found in \cite{Argyros/TodorcevicBK}.
The notation is  slightly modified to be more suitable  for the present work.

\begin{defn}\label{defn.barrier}
A family $B$ of finite subsets of $\om$ is a 
{\em front} on $\bigcup B$ if
\begin{enumerate}
\item
$a\not \tle b$ whenever $a\ne b\in B$; and 
\item
$\bigcup B$ is infinite and for every infinite $M\sse\bigcup B$ there is an $a\in B$ such that $a\vartriangleleft M$.
\end{enumerate}
$B$ is a 
{\em barrier} on $\bigcup B$
if
\begin{enumerate}
\item[$(1')$]
$a\not\sse b$ whenever $a\ne b\in B$; and
\end{enumerate}
(2) holds.
\end{defn}

\begin{notation}\label{notn.B_n}
For a barrier $B$ and $n\in\om$, let $B_n=\{b\in B:n=\min(b)\}$;
and let $B_{\{n\}}=\{a\in[\om]^{<\om}:\min(a)>n$ and $\{n\}\cup a\in B\}$.
For $N$ an infinite subset of $\bigcup B$,
$B| N=\{b\in B:b\sse N\}$.
\end{notation}

\begin{defn}\label{defn.uniformbarrier}
Let $\al<\om_1$ and $M$ be an infinite subset of $\om$.
A  subset $B\sse[\om]^{<\om}$ is an {\em $\al$-uniform family on $M$} provided that
\begin{enumerate}
\item[(a)]
$\al=0$  implies $B=\emptyset$.
\item[(b)]
$\al=\beta+1$ implies that $\emptyset\not\in B$ and $B_{\{n\}}$ is $\beta$-uniform on $M\setminus (n+1)$, for all $n\in M$.
\item[(c)]
$\al>0$ is a limit ordinal implies that there is an increasing sequence $\{\al_n\}_{n\in M}$ of ordinals converging to $\al$ such that $B_{\{n\}}$ is $\al_n$-uniform on $M\setminus (n+1)$, for all $n\in M$.
\end{enumerate}
A barrier $B\sse[\om]^{<\om}$ which is also a uniform family is called a {\em uniform barrier}.
\end{defn}

The following facts show that, were we to use fronts instead of uniform barriers
 as bases for the structures  $\Fin^B$ and $\mathcal{E}_B$, we would gain nothing more. 
The following results stated in Fact \ref{facts.barriersencompassfronts} appear as 
Lemmas II.3.2, II.3.3,  II.3.8 (due to Galvin),  Corollary II.3.10 and  Lemma II.3.17, respectively, in  \cite{Argyros/TodorcevicBK}.

\begin{facts} 
 \label{facts.barriersencompassfronts}
\begin{enumerate}
\item
Every $\al$-uniform family on $M$ is a front on $M$.
\item
If $B$ is $\al$-uniform on $M$ and if $N$ is an infinite subset of $M$, then $B| N$ is $\al$-uniform on $N$.
\item
For every family $B\sse[\om]^{<\om}$ and every infinite $M\sse \om$, there is an infinite $N\sse M$ such that the restriction $B| N$ is either empty or contains a barrier.
\item
For every family $B\sse[\om]^{<\om}$ and for every infinite $M\sse \om$, there is an infinite $N\sse M$ such that  either
$B| N=\emptyset$ or  $B| N$ contains a uniform barrier.
\item
The lexicographical rank of an $\al$-uniform barrier is equal to $\om^{\al}$.
\end{enumerate}
\end{facts}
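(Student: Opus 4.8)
The plan is to prove items (1), (2) and (5) by transfinite induction on $\al$, following the three-case recursion of Definition~\ref{defn.uniformbarrier} verbatim, and to prove items (3) and (4) by Galvin-style fusion arguments, with the Sperner-izing fusion in (3) carrying essentially all the real weight. For (1), the successor case is the heart of the matter: given an infinite $N\sse M$ with $n=\min(N)$, the tail $N\setminus\{n\}$ is an infinite subset of $M\setminus(n+1)$, so the inductive hypothesis applied to the $\be$-uniform family $B_{\{n\}}$ produces $a\vartriangleleft N\setminus\{n\}$ with $a\in B_{\{n\}}$, whence $\{n\}\cup a\in B$ is a proper initial segment of $N$; and $\tle$-incomparability of two distinct members of $B$ reduces, after comparing their minima, to $\tle$-incomparability inside a single $B_{\{n\}}$, which the inductive hypothesis supplies. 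The limit case is identical using a sequence $\al_n\nearrow\al$, and (2) is immediate once one notes that restriction to $N$ commutes with the reduct $B\mapsto B_{\{n\}}$ and that the witnessing ordinal sequences are inherited. For (5) I would show by induction that the order type of $(B,\lelex)$ equals $\sum_{n<\om}\mathrm{ot}(B_{\{n\}},\lelex)$, the sum taken in increasing order of $n$: in the successor case every summand is $\om^{\be}$, so the sum is $\om^{\be}\cdot\om=\om^{\al}$; in the limit case the summands are $\om^{\al_n}$ with $\al_n\nearrow\al$, and an increasing sum of these is again $\om^{\al}$.

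The substantive item is (3). Given an arbitrary $B\sse[\om]^{<\om}$ and an infinite $M\sse\om$, I would first apply the Galvin--Prikry theorem to the metrically open set $\{X\in[M]^{\om}:(\exists k)\ r_k(X)\in B\}$ to obtain an infinite $N\sse M$ on which this set is either all of $[N]^{\om}$ or empty. In the empty case $B|N=\emptyset$, since any $b\in B|N$ would be an initial segment of some $X\in[N]^{\om}$. In the other case every $X\in[N]^{\om}$ has an initial segment in $B|N$, and then the set $F$ of $\tle$-minimal members of $B|N$ is a front on $N$ contained in $B|N$. The remaining---and genuinely delicate---step is to thin $N$ further so that $F$ becomes Sperner, i.e.\ a barrier: enumerate the potential ``bad pairs'' $a\subsetneq b$ from $F$ and build a decreasing sequence of infinite sets together with a growing finite stock of ``locked'' minimal elements, at each stage discarding finitely many integers so as to kill the next bad pair while preserving the front property, and then take the diagonal intersection. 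This Sperner-izing fusion is where all the real work lies, and I expect it to be the main obstacle.

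Finally, (4) is deduced from (3) by homogenizing ranks. Let $B'\sse B|N$ be a barrier supplied by (3) (or $B|N=\emptyset$, in which case nothing is to prove), and argue by induction on the rank of $B'$ as a well-founded tree, using that $\rank(B'_{\{n\}})<\rank(B')$ for every $n$. Since the ordinals are well-ordered, $n\mapsto\rank(B'_{\{n\}})$ has a non-decreasing subsequence, which is either eventually constant or contains a strictly increasing subsequence; restrict the set of minima accordingly and then---by a further diagonalization over those minima, applying the inductive hypothesis to each $B'_{\{n\}}$ in turn and intersecting---pass to an infinite subset on which every one-step reduct is $\be$-uniform for a fixed $\be$ (successor case) or $\al_n$-uniform with $\al_n\nearrow\al$ (limit case). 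By Definition~\ref{defn.uniformbarrier} this family is then uniform, and being a subfamily of the barrier $B'$ it is Sperner, hence a uniform barrier contained in $B|N$.
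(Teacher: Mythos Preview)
The paper does not prove these facts at all; it simply cites them as Lemmas II.3.2, II.3.3, II.3.8, Corollary II.3.10, and Lemma II.3.17 of Argyros--Todorcevic.  So there is no ``paper's proof'' to compare against, and the question is whether your sketches stand on their own.  Your arguments for (1), (2), (4), and (5) are correct and standard; in particular, the rank-homogenization and diagonalization in (4) go through as you describe, since the resulting family sits inside the barrier $B'$ (hence is Sperner) and the diagonalized restrictions of the $C_n$'s still block every infinite subset.

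The one genuine gap is in (3), precisely at the step you flag as the main obstacle.  Your proposed Sperner-izing fusion---enumerate bad pairs $a\subsetneq b$, lock a growing finite stock of integers, discard finitely many to kill the next bad pair---does not obviously make progress: for a fixed $a\in F$ there may be infinitely many $b\in F$ with $a\subsetneq b$, and once the elements of a bad pair are all locked you cannot remove any of them.  There is no evident reason why, at stage $k$, some choice of the next locked integer avoids creating a new bad pair inside the locked set.  The clean argument bypasses ad hoc fusion entirely.  Having produced the front $F$ on $N$, color each $a\in F$ by $c(a)=0$ if there exists $b\in F$ with $b\subsetneq a$, and $c(a)=1$ otherwise.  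By Nash--Williams (equivalently, one more application of Galvin--Prikry to the clopen set $\{X:c(f(X))=0\}$, where $f(X)$ is the shortest initial segment of $X$ in $F$), there is an infinite $N'\subseteq N$ with $F|N'$ monochromatic.  Color $1$ means $F|N'$ is Sperner, hence a barrier.  Color $0$ is impossible: pick $a_0\in F|N'$; there is $a_1\in F$ with $a_1\subsetneq a_0$, and since $a_1\subseteq a_0\subseteq N'$ we have $a_1\in F|N'$, hence some $a_2\subsetneq a_1$ in $F|N'$, and so on, yielding an infinite strictly descending chain of finite sets.  This Nash--Williams--plus--descent step is the missing idea.
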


Thus, for every front $B\sse[\om]^{<\om}$ there is an infinite $N\sse\bigcup B$ such that  $B| N$ is a uniform barrier, and any further restriction $B| N'$ for $N'\in[N]^{\om}$ is again a uniform barrier with the same rank as $B| N$.
Since the lexicographic rank of an $\al$-uniform barrier is $\om^{\al}$, we will  simply  say that the barrier is  {\em uniform of  rank $\al$}.

\begin{examples}
$\{\emptyset\}$ is the uniform barrier of rank $0$.
For each $1\le k<\om$, $[\om]^k$ is the uniform barrier  of rank $k$.
For $\om\le \al<\om_1$ there are a myriad of uniform barriers of rank $\al$. 
The most well-known is the {\em Schreier barrier}
$S=\{b\in[\om]^{<\om}: |s|=\min(s)+1\}$, which is uniform of rank $\om$.
\end{examples}

For $B,C\sse[\om]^{<\om}$,
write $C\tl B$ if and only if for each $c\in C$  there is a  $b\in B$  such that $c\tl b$.
The next fact is an immediate consequence of the following:  Every barrier on $\om$ is a Nash-Williams family on the Ellentuck space, and hence is 
Ramsey (recall Definition \ref{defn.5.16}). 
Given barriers $B$ and $C$ on $\om$, we may color each $c\in C$ by $0$ if there is a $b\in B$ such that $b= c$, $1$ if there is a $b\in B$ such that $b\tl c$, and $2$ if there is a $b\in B$ such that $b\tr c$.
Applying the Nash-Williams Theorem for this $3$-coloring on $C$ yields the following.
In the case of $B|M\tl C|M$, we shall call $B|M$ a {\em projection} of $C|M$.

\begin{fact}\label{fact.2barrierscompare}
Suppose $B$ and $C$ are two barriers  on the same infinite set $N\sse \om$.
Then there is an infinite $M\sse N$ such that one of the following hold:
$B|M=C|M$, $B|M\tl C|M$, or $B|M\tr C|M$.
\end{fact}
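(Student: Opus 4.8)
The plan is to follow exactly the recipe described in the sentences preceding the statement, turning the informal three-coloring into a genuine application of the Nash-Williams Theorem on the Ellentuck space. First I would recall that since $B$ and $C$ are barriers on the infinite set $N$, each of them is in particular a Nash-Williams family of finite approximations on the Ellentuck space restricted to $[N]^\om$ (by Definition \ref{defn.5.16}, being a barrier, or even just a front, forces the Nash-Williams condition), and hence each is Ramsey by Theorem \ref{thm.ANW}. The key point I want is that $C$, being a front on $N$, has the property that every infinite $M\sse N$ has an initial segment lying in $C$; this is what will let me talk about "the $c\in C$ that is an initial segment of $M$."

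Next I would define the coloring. For $c\in C$, set $\chi(c)=0$ if $c\in B$; $\chi(c)=1$ if there is $b\in B$ with $b\tl c$ (a proper initial segment); and $\chi(c)=2$ otherwise. I would first argue this is well-defined and that in case $\chi(c)=2$, there is in fact $b\in B$ with $c\tl b$: indeed, let $M\in[N]^\om$ be any infinite set with $c\tl M$; since $B$ is a front on $N$, there is $b\in B$ with $b\tl M$, and since $B$ is a barrier the elements of $B$ are pairwise $\sse$-incomparable, so the unique such $b$ satisfies $b\tl M$; comparing $b$ and $c$ as initial segments of the same $M$, either $b\tle c$ or $c\tl b$, and $\chi(c)\neq 0,1$ rules out the first, forcing $c\tl b$. (One must check this $b$ does not depend on the choice of $M$, which again follows from $\sse$-incomparability of distinct members of $B$.) Then I apply the Nash-Williams Theorem / Ramsey property of the barrier $C$ to the three-cell partition of $C$: there is an infinite $M\sse N$ on which $\chi$ is constant, say with value $i$ on $C|M$.

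Finally I would translate the three cases. If $i=0$, then $C|M\sse B$; since by Facts \ref{facts.barriersencompassfronts}(2) both $B|M$ and $C|M$ are still barriers on $M$, and one barrier on $M$ cannot properly $\sse$-contain another distinct one comparable under $\tle$... more carefully: $C|M\sse B|M$ as families, but $B|M$ is also a barrier, so if some $b\in B|M\setminus C|M$ existed it would be comparable (via the front property, having an initial segment in $C|M$) to a member of $C|M\sse B|M$, contradicting that distinct members of the barrier $B|M$ are $\sse$-incomparable; hence $B|M=C|M$. If $i=1$, then every $c\in C|M$ has a proper initial segment $b\in B$, and that $b$ is contained in $M$, so $b\in B|M$; this says precisely $C|M\tl B|M$, i.e.\ $B|M\tr C|M$. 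If $i=2$, the symmetric argument using the strengthened conclusion of the previous paragraph gives $B|M\tl C|M$. This exhausts the cases and proves the Fact.

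The main obstacle I anticipate is the bookkeeping around well-definedness and the case $i=0$: one has to be careful that "the $b\in B$ comparable to a given $c\in C$" is genuinely unique and independent of the witnessing infinite set, and that constancy of the color really upgrades to the clean conclusions $B|M=C|M$, $B|M\tl C|M$, or $B|M\tr C|M$ rather than merely one-sided inclusions. All of this rests on the fact that distinct elements of a barrier are $\sse$-incomparable together with the front property that every infinite subset has an initial segment in the family — both available here since $B$ and $C$ are uniform barriers and restrictions of uniform barriers are uniform barriers (Facts \ref{facts.barriersencompassfronts}(2)). No genuinely new idea beyond the stated Nash-Williams Theorem is needed.
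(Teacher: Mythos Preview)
Your approach is exactly the one the paper uses: three-color the barrier $C$ according to whether each $c$ equals, properly extends, or is properly extended by a member of $B$, then apply the Nash-Williams Theorem. One small slip: in your case $i=1$ (each $c\in C|M$ has a proper initial segment $b\in B|M$), the conclusion should be $B|M\tl C|M$, not $B|M\tr C|M$; what you establish (``for each $c$ there is $b$ with $b\tl c$'') is not literally the definition of either relation, so you still need the short extra step of taking an arbitrary $b'\in B|M$, extending it to an infinite $M'\sse M$, picking the unique $c'\in C|M$ with $c'\tl M'$, and ruling out $c'\tle b'$ via the barrier property of $B$ to conclude $b'\tl c'$.
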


Now we develop the hierarchy of ideals  $\Fin^B$  on $\mathcal{P}(B)$, for $B$ a uniform barrier.
Recall that $\Fin$ denotes the collection of finite subsets of $\om$.
In \cite{DobrinenJSL15}, we let $\Fin^{\otimes 1}$ denote $\Fin$, and
 given $\Fin^{\otimes k}$, $k<\om$, we defined $\Fin^{\otimes k+1}$ to be  the collection of all $A\sse[\om]^{k+1}$ such that for all but finitely many $a_0\in\om$, the set $\{\{a_1,\dots,a_{k}\}\in[\om]^k:a_1>a_0$ and $\{a_0,a_1,\dots,a_{k}\}\in A\}$ is a member of $\Fin^{\otimes k}$.
In the present paper, we shall use the simpler notation
 $\Fin^B$, for $B$ a uniform barrier.
Then $\Fin^{[\om]^k}$ denotes $\Fin^{\otimes k}$  from \cite{DobrinenJSL15}, for  $k<\om$.
Given a uniform barrier $B$ on $\om$ of rank $\al$,  note that
 $B_{\{n\}}$ is a uniform barrier on $\om\setminus (n+1)$ of rank less than $\al$, for any $n<\om$.
Assuming we have defined $\Fin^C$ for each uniform barrier $C$ of rank less than $\al$,
then we have also  defined $\Fin^{B_{\{n\}}}$ for each  $n<\om$
(relativizing to $\om\setminus n+1$).
Let $\Fin^{B_n}=\{X\sse B_n:\{a\setminus \{n\}:a\in X\}\in\Fin^{B_{\{n\}}}\}$.
\begin{defn}\label{def.FinB}
For $B$ a uniform barrier, define 
\begin{equation}
\Fin^B=\{A\sse B:\forall^{\infty} n\ ( A_n\in\Fin^{B_n})\}.
\end{equation}
\end{defn}

For each uniform barrier $B$ on $\om$,  $\Fin^B$ is a $\sigma$-closed ideal; hence,
$(\mathcal{P}(B)/\Fin^B)\setminus \{\bo\}$ 
is a $\sigma$-closed partial order which can be used to construct an  ultrafilter $\mathcal{G}_B$ (on the countable base set $B$).
If $B$ and $C$ are uniform barriers with $C\tl B$, 
the projection map $\pi_{B,C}$ from $B$ to $C$ induces a projection from  $\mathcal{P}(B)/\Fin^B$  to $\mathcal{P}(C)/\Fin^C$, and hence induces a projection from the ultrafilter $\mathcal{G}_B$ to $\mathcal{G}_C$.
It follows that $\pi_{B,C}(\mathcal{G}_B)$ is generic for $\mathcal{P}(C)/\Fin^C$.
Thus, the generic ultrafilters form a hierarchy in the isomorphism classes of ultrafilters:  If $C$ is a projection of $B$, then $\mathcal{G}_C$ is Rudin-Keisler reducible to $\mathcal{G}_B$.

Let $\sse^{\Fin^B}$ denote the following partial order on $\mathcal{P}(B)$:
For $X,Y\sse B$, $Y\sse^{\Fin^B} X$ if and only if $Y\setminus X\in\Fin^B$.
By a routine induction on the rank of $B$, one shows that 
$\mathcal{P}(B)/\Fin^B$ 
is forcing equivalent to $(\mathcal{P}(B),\sse^{\Fin^B})$.
In the next section, we construct topological Ramsey spaces $\mathcal{E}_B$ which, when considered as partially ordered by $\sse^{\Fin^B}$, form dense subsets of  $(\mathcal{P}(B),\sse^{\Fin^B})$.

%******************************************************
%****************************************************
%***************************************************

\section{Infinite dimensional Ellentuck Spaces}\label{sec.defR}

This section contains the construction of the spaces 
$\mathcal{E}_B$, $B$ a uniform barrier of infinite rank, and Theorem \ref{thm.E_BtRs}
proving that each one is a topological Ramsey space.
These spaces extend the hierarchy of the finite dimensional Ellentuck  spaces $\mathcal{E}_k$, $1\le k<\om$, in \cite{DobrinenJSL15},
 $\mathcal{E}_1$ denoting the Ellentuck space here.
Each new space  $\mathcal{E}_B$ is modeled on  the structure of some uniform barrier $B$ on $\om$ of infinite rank and is 
 constructed so that $(\mathcal{E}_B,\sse^{\fin^B})$  is forcing equivalent to $\mathcal{P}(B)/\Fin^B$.
The spaces form a hierarchy in that 
whenever $C$ is a projection of $B$, then $\mathcal{E}_C$ is isomorphic to a projection of $\mathcal{E}_B$ (Fact \ref{fact.projhierarchy}).
Similarly to the way that uniform barriers of rank $\al$ are constructed
by recursion using uniform barriers of rank less than $\al$,
so too the space $\mathcal{E}_B$ is  formed using spaces $\mathcal{E}_C$ for $C$ with rank less than the rank of $B$.
Copies  of  smaller dimensional Ellentuck spaces are seen inside $\mathcal{E}_B$  both as projections of $\mathcal{E}_B$ and as upward images above some fixed root via the recursive construction (Fact \ref{fact.important}).
We point out 
that for any two spaces $\mathcal{E}_B$ and $\mathcal{E}_C$, one is embeddable into the other if we allow for restricting  below some  members of the spaces (Fact \ref{fact.EBCemb}).

Fix the following notation.

\begin{notation}\label{notn.hatB}
Let $\om^{\not\,\downarrow<\om}$  denote the collection of finite non-decreasing sequences of members of $\om$.
For $s\in \om^{\not\,\downarrow<\om}$, let $\lh(s)$ denote the {\em length of $s$}, that is, the cardinality of the domain of the sequence $s$.
Let  $<_{\mathrm{lex}}$ denote the lexicographic ordering on  $\om^{\not\,\downarrow<\om}$, where we also consider any proper initial segment of a sequence to be lexicographically below that sequence.
Given $s,t\in \om^{\not\,\downarrow<\om}$,
define $s\prec t$ if and only if either
\begin{enumerate}
\item
$\max(s)<\max(t)$, or
\item
$\max(s)=\max(t)$ and
$s<_{\mathrm{lex}}t$.
\end{enumerate}
Thus, $\prec$   well-orders
 $\om^{\not\,\downarrow<\om}$ in order-type $\om$, with  the empty sequence $()$ as the $\prec$-minimum.

Define the function $\sigma:[\om]^{<\om}\ra\om^{\not\,\downarrow<\om}$ as follows:
Let $\sigma(\emptyset)=()$, the empty sequence; and
 for $\{a_0,\dots,a_n\}\in[\om]^{<\om}\setminus \{\emptyset\}$, let
\begin{equation}
\sigma(a)=(a_0,a_1-1,a_2-2,\dots,a_n-n).
\end{equation}
Thus, $\sigma$ is a bijection between the collection of all finite subsets of $\om$ and the collection of all finite non-decreasing sequences of members of $\om$.

For  $B\sse[\om]^{<\om}$,  let $\hat{B}$ denote $\{a\in[\om]^{<\om}:\exists b\in B\, (a\tle b)\}$.
For sequences $s,t\in\om^{\not\,\downarrow<\om}$, we shall also use the notation $s\tle t$ to denote that $s=t\re m$
for some $m\le \lh(t)$.
Write $s\tl t$ if and only if $s \tle t$ and $s\ne t$.
For $S\sse \om^{\not\,\downarrow<\om}$, 
let $\hat{S}$ denote $\{s\in \om^{\not\,\downarrow<\om}:\exists t\in S\, (s\tle t)\}$.
\end{notation}

Construct the related spaces of finite sequences as follows.

\begin{defn}[The Sequence Spaces $(S_{\hat{B}},\prec)$ and $(S_B,\prec)$]\label{defn.prec}
Let $B$ be a uniform barrier on $\om$.
Let
\begin{equation}
S_{\hat{B}}=\{\sigma(a):a\in\hat{B}\}\mathrm{\ \ and \ \ }
S_B=\{\sigma(b):b\in B\}.
\end{equation}
\end{defn}

Thus,
$S_{\hat{B}}$ and $S_B$ are   collections of  non-decreasing finite sequences of members of $\om$ which retain the structure of $\hat{B}$ and $B$, respectively.
Note that  $\prec$  well-orders  $S_{\hat{B}}$  and $S_B$
in order-type $\om$.
We now define the maximum member $\bW_B$ of the space $\mathcal{E}_B$.

\begin{defn}[The top member $\bW_B$ of $\mathcal{E}_B$]\label{defn.W_B}
Let  $B$  be a uniform barrier on $\om$.
Since $(S_{\hat{B}},\prec)$  has
 order-type $\om$, 
let $\nu_B:(S_{\hat{B}},\prec)\ra (\om,<)$ denote this order isomorphism.
%Thus, for $s\in S_{\hat{B}}$,  $s$ is the $\nu_B(s)$-th member of  %$(S_{\hat{B}},\prec)$.
For $s\in S_B$, 
let $\lh(s)$ denote the length of $s$ and 
let
\begin{equation}
\bW_B(s)=\{\nu_B(s\re m): 1\le  m\le  \lh(s)\},
\end{equation}
which  is a member of $[\om]^{\lh(s)}$.
Define
\begin{equation}
\bW_B=\{\bW_B(s): s\in S_B\}.
\end{equation}

Define the map $\psi_B:S_B\ra \mathbb{W}_B$ by
$\psi(s)=\mathbb{W}_B(s)$, for $s\in S_B$.
Let $\rho_B$ denote $\psi_B\circ\sigma$.
\end{defn}

\begin{rem}\label{rem.precorderW}
Note that
$\bW_B$ is a subset of $[\om]^{<\om}$ which is isomorphic to $B$ via the map $\rho_B$.
That is, $\rho_B$ preserves the tree structure of $B$, in particular, lengths of nodes and lexicographic order.
Further, extend $\psi_B$ to a map from $S_{\hat{B}}$ to $\widehat{\bW}_B$ as follows:  For $s\in S_B$ and $1\le m\le \lh(s)$,
let $\psi_B(s\re m)=\{\nu_B(s\re i):1\le i\le m\}$.
Then  $\prec$ on $S_{\hat{B}}$ induces a well-order on  $\widehat{\bW}_B$; so we shall abuse notation and consider 
 $\widehat{\bW}_B$ as also being well-ordered by $\prec$.
\end{rem}

Next, we define the domain spaces $D_B$.
These are the collections of sets $S\sse S_B$ 
which we allow to serve as templates, or domains,  for members of $\mathcal{E}_B$,
 in the sense that $\mathcal{E}_B$ will  be defined to be $\{\rho_B(S):S\in D_B\}$.
Since the definition is by recursion on the rank of $B$, we begin with $B=[\om]^1$,  presenting an equivalent definition of the Ellentuck space $\mathcal{E}$, which,   in the present notation,   is
denoted as  $\mathcal{E}_{[\om]^1}$.
The restriction map $r^B_k(S)$ is defined to give the first $k$ members of  $S$, and the full approximation map $\fa^B_k(S)$ is designed to ensure that the structure of previously formed spaces is preserved in the recursive construction.

\begin{defn}[The Domain Space $D_B$,   $r^B_n$, and $\mathfrak{a}^B_n$]\label{defn.domainspace}
For each uniform barrier $B$, we define the {\em domain space} $D_B$, the {\em restriction map} $r_k^B$ and  the {\em full approximation map} $\mathfrak{a}^B_k$.
The construction is by recursion on the rank of $B$.

Starting with $B=[\om]^1$, let
\begin{equation}
D_{[\om]^1}=\{S\sse S_{[\om]^1}:|S|=\om\}.
\end{equation}
For each $S\in D_{[\om]^1}$ and $k<\om$, 
the $k$-th restriction and the $k$-th full approximation  of $S$ are the same:
Letting $\{s_i:i<\om\}$ enumerate $S$ according to  the $\prec$-well-ordering of $S$, define
\begin{equation}
r^{[\om]^1}_k(S)=\mathfrak{a}^{[\om]^1}_k(S)=\{s_i:i<k\},
\end{equation}
the
 $k$ $\prec$-least members of $S$.

Suppose that for all uniform barriers $C$ of rank less than $\al$,  $D_C$ has been defined, and also $r_k^C(S)$ and $\mathfrak{a}_k^C(S)$ have been defined, for all $S\in D_C$ and $k<\om$.
Given a uniform barrier $B$ of rank $\al$, recalling Notation \ref{notn.B_n}, for each $n<\om$,
$B_{\{n\}}$ is a uniform barrier of some rank  less than $\al$ on $[n+1,\om)$.
Let $C^n$ denote the uniform barrier of rank less than $\al$ on $\om$ obtained by shifting the members of $B_{\{n\}}$ down by $n+1$.
That is,  
$C^n$ is the collection of all sets $\{a_0-(n+1),\dots, a_l-(n+1)\}$, where  $\{a_0,\dots,a_l\}\in B_{\{n\}}$.
Since $\rank(C^n)<\al$,   $D_{C^n}$ is already defined, along with the restriction and full approximation maps.

Given $s=(s_0,\dots,s_l)$, let $s+(n+1)$ denote $(s_0+n+1,\dots,s_l+n+1)$.
Let $\upsilon_n:S_{C^n}\ra S_{B_n}$ be the map 
$\upsilon_n(s)=(n)^{\frown} (s+(n+1))$.
Define
\begin{equation}
 D_{B_n}=\{\upsilon_n(S):S\in D_{C^n}\}.
\end{equation}
For $T\in D_{B_n}$ and $k<\om$,
define 
\begin{equation}
\fa^{B_n}_k(T)=
\upsilon_n(\fa^{C^n}_k(\upsilon_n^{-1}(T))).
\end{equation}
Let $\widehat{\fa^{B_n}_k(T)}$ denote $\{s\re m:s\in \fa^{B_n}_k(T)$ and $1\le m\le \lh(s)\}$,
the closure of $\fa^{B_n}_k(T)$
under initial segments.
Given two subsets $T,T'\sse S_{\hat{B}_n}$,
write $T\prec T'$ if and only if for each $s\in T$ and each $s'\in T'$,
$s\prec s'$.
For $S\sse S_B$ and $n<\om$, let $S_n$ denote $\{s\in S:s\tre (n)\}$.
\vskip.1in

For the sake of efficiency we shall  define $S_B$ and $\fa^B_k$  simultaneously, (though technically the definition of $S_B$ does not depend on the definition of $\fa^B_k$).
A subset 
$S\sse S_B$ is defined to be in $D_B$ if and only if
$\hat{S}\cap \om^1$ is infinite,
and  enumerating $\hat{S}\cap \om^1$ as $\{(n_0),(n_1),\dots\}$,
\begin{enumerate}
\item
For each $i<\om$,
$S_{n_i}:=\{s\in S:s\tre (n_i)\}\in D_{B_{n_i}}$; and
\item
Defining $\fa^B_0(S)=\emptyset$ and
$\fa^B_1(S)=\fa^{B_{n_0}}(S_{n_0})$,
given $k\ge 1$ and $\fa^B_k(S)$, we require that 
\begin{enumerate}
\item[(a)]
$\fa^B_k(S)\prec
\widehat{\fa^{B_{n_0}}_{k+1}(S_{n_0})}\setminus \widehat{\fa^B_k(S)}$, and
\item[(b)]
 for each  $1\le i\le k$,
$\fa^{B_{n_{i-1}}}_{k-i}(S_{n_{i-1}})\prec
\widehat{\fa^{B_{n_i}}_{k+1-i}(S_{n_i})}\setminus
\widehat{\fa^{B_{n_i}}_{k-i}(S_{n_i})}$.
\end{enumerate}
Define 
$\fa^B_{k+1}(S)=
\fa^B_{k}(S)\cup \fa^{B_{n_0}}_{k+1}(S_{n_0})
\cup \dots\cup \fa^{B_{n_k}}_1(S_{n_k})$.
\end{enumerate}
Finally,
the restriction map $r^B$ is simply defined by letting $r^B_k(S)$ be the set of the $k$ $\prec$-least members of $S$.
\end{defn}

\begin{rem}
First,  notice that (2)  is equivalent to requiring that
\begin{align}
&\mathfrak{a}^{B_{n_0}}_2(S_{n_0})
\prec 
\widehat{\mathfrak{a}^{B_{n_1}}_1(S_{n_1})}\cr
\prec\  &   \widehat{ \mathfrak{a}^{B_{n_0}}_3(S_{n_0})}\setminus \widehat{\mathfrak{a}^{B_{n_0}}_2(S_{n_0})}
\prec 
\widehat{\mathfrak{a}^{B_{n_1}}_2(S_{n_1})}\setminus \widehat{\mathfrak{a}^{B_{n_1}}_1(S_{n_1})}
\prec 
\widehat{\mathfrak{a}^{B_{n_2}}_1(S_{n_2})}\cr
\prec \ 
&\widehat{\mathfrak{a}^{B_{n_0}}_4(S_{n_0})}\setminus \widehat{\mathfrak{a}^{B_{n_0}}_3(S_{n_0})}
\prec \widehat{\mathfrak{a}^{B_{n_1}}_3(S_{n_1})}\setminus \widehat{\mathfrak{a}^{B_{n_1}}_2(S_{n_1})}\prec\dots
\end{align}
%\prec 
%\widehat{\mathfrak{a}^{B_{n_2}}_2(S_{n_2})}\setminus %\widehat{\mathfrak{a}^{B_{n_2}}_1(S_{n_2})}
%\prec 
%\widehat{\mathfrak{a}^{B_{n_3}}_1(S_{n_3})}
%\prec 
%\widehat{\mathfrak{a}^{B_{n_0}}_5(S_{n_0})}\setminus %\widehat{\mathfrak{a}^{B_{n_0}}_4(S_{n_0})}\prec\dots
%$.
Second, the $k+1$-st full approximation
$\fa_{k+1}^B(S)$
is equal to  $\fa_{k+1}^{B_{n_0}}(S_{n_0})\cup \fa_k^{B_{n_1}}(S_{n_1})\cup\dots\cup \fa_1^{B_{n_k}}(S_{n_k})$.
Third, the last member of a full approximation is free to range over a tail of $S$.
That is, given $S\in D_B$ and $k\ge 1$,  letting $n$ be such that $r_{n+1}^B(S)=\fa_k^B(S)$,
the set $\{s\in S: r_n^B(S)\cup\{s\}=\fa_k^B(S')$ for some $S'\in D_B\}$ is exactly the set $\bigcup\{S_m:m\ge\max(r^B_n(S))\}$, which is again a member of $D_B$.
\end{rem}

Now we  have the correct machinery to define the  spaces which extend the Ellentuck space up to  all countably infinite ranks.

\begin{defn}[The Infinite Dimensional Ellentuck Spaces $(\mathcal{E}_B,\le,r)$]\label{def.E_B}
For a uniform barrier $B$, let
\begin{equation}\label{eq.defE_B}
\mathcal{E}_B=\{\psi_B(S):S\in D_B\}.
\end{equation}
For $X,Y\in\mathcal{E}_B$,
$Y\le X$ if and only if $Y\sse X$.
Given $X\in\mathcal{E}_B$ and  $k<\om$, letting $S$ be the member of $D_B$ such that $X=\psi_B(S)$, 
define the $k$-th restriction of $X$ to be
$r_k^B(X)=\psi_B(r^B_k(S))$ and  the $k$-th full approximation of $X$ to be $\mathfrak{a}_k^B(X)=\psi_B(\mathfrak{a}_k^B(S))$.
\end{defn}

\begin{fact}\label{fact.projhierarchy}
If $B$ and $C$ are uniform barriers such that $C\tl B$, then the projection map from $B$ to $C$ induces a projection from $\mathcal{E}_B$ to $\mathcal{E}_C$.
\end{fact}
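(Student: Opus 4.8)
The plan is to unwind the definition of ``projection'' in this context and then verify the three things one must check: that the induced map sends $\mathcal{E}_B$ into $\mathcal{E}_C$, that it is order-preserving for $\sse$, and that it commutes appropriately with the restriction/approximation maps so as to constitute a projection in Todorcevic's sense (Definition of projection as used for the $\mathcal{E}_k$ in \cite{DobrinenJSL15}). Recall that $C\tl B$ means every $c\in C$ has a (necessarily unique, since $C$ is a barrier and hence Nash--Williams) $b\in B$ with $c\tl b$, giving the surjection $\pi_{B,C}\colon B\to C$ sending $b$ to the initial segment of $b$ lying in $C$. Since $\mathcal{E}_B$ lives on the copy $\bW_B$ of $B$ obtained via $\rho_B=\psi_B\circ\sigma$, and $\mathcal{E}_C$ on $\bW_C$, the first step is to transport $\pi_{B,C}$ through $\rho_B,\rho_C$ to a map $\tilde\pi\colon\bW_B\to\bW_C$; because $\rho_B,\rho_C$ preserve tree structure (lengths and lexicographic order, by Remark \ref{rem.precorderW}), $\tilde\pi$ again sends each node of $\bW_B$ to an initial segment of it that lies in $\bW_C$. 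Then for $X\in\mathcal{E}_B$ one defines the projection of $X$ to be $\tilde\pi[X]:=\{\tilde\pi(w):w\in X\}\sse\bW_C$.

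First I would show $\tilde\pi[X]\in\mathcal{E}_C$. The clean way is induction on $\rank(B)$, mirroring the recursive construction in Definition \ref{defn.domainspace}: if $X=\psi_B(S)$ with $S\in D_B$ and $\hat S\cap\om^1=\{(n_0),(n_1),\dots\}$, then $\pi_{B,C}$ restricted to $B_{n_i}$ is a projection of $B_{n_i}$ onto $C_{n_i}$ (after the common down-shift, a projection of $C^n$-type barriers of smaller rank), so by the inductive hypothesis each $S_{n_i}$ projects into $D_{C_{n_i}}$; one then checks that the interleaving conditions (2)(a),(b) in Definition \ref{defn.domainspace} are preserved under projection because projection only deletes and/or truncates nodes and does not disturb the $\prec$-order among the surviving pieces. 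The base case $B=[\om]^1$ is the statement that $\pi$ takes an infinite subset of $S_{[\om]^1}$ to an infinite subset, which is immediate since $C\tl[\om]^1$ forces $C=\{\emptyset\}$ or $C=[\om]^1$, trivial in either case. Order-preservation $X\sse X'\Rightarrow\tilde\pi[X]\sse\tilde\pi[X']$ is immediate from the pointwise definition. For commutation with restrictions: since $\rho_B$, $\rho_C$ preserve $\prec$, the $\prec$-least $k$ nodes of $X$ map onto an initial segment of $\tilde\pi[X]$ in $\prec$-order, and one reads off that $r^C_m(\tilde\pi[X])$ is determined by $r^B_k(X)$ for suitable $k$; the analogue of the ``finite approximations'' compatibility condition for projections between topological Ramsey spaces (as in \cite{DobrinenJSL15}) then follows by the same bookkeeping.

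The main obstacle I expect is the inductive step verifying that the interleaving/full-approximation conditions (2)(a),(b) of Definition \ref{defn.domainspace} survive projection: one must argue that if $S\in D_B$ then, after projecting each $S_{n_i}$ down to $D_{C_{n_i}}$, the resulting family still admits full approximations arranged in the required $\prec$-staircase. This is delicate because projection changes \emph{which} nodes appear and can collapse many nodes of $S_{n_i}$ to a single shorter node of its image, so the alignment of the $\fa$-blocks has to be re-derived rather than transported verbatim. The key observation that makes it go through is that $\tilde\pi$ is monotone for $\prec$ on each $\widehat{\bW}_{B_{n_i}}$ and commutes with taking initial-segment closures, so a $\prec$-separation $T\prec T'$ between blocks of $S$ yields $\tilde\pi[T]\prec\tilde\pi[T']$ between the corresponding blocks of the image; combined with the fact (remarked after Definition \ref{defn.domainspace}) that the last node of a full approximation ranges freely over a tail, one can always thin the image to realize the exact staircase of Definition \ref{defn.domainspace}, possibly after passing to a $\sse$-smaller member of $\mathcal{E}_C$ — which is harmless, since a projection is only required to land in the space and respect the structure, not to be surjective. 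Everything else is routine diagram-chasing through the maps $\sigma,\psi_B,\rho_B,\upsilon_n,\nu_B$.
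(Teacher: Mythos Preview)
The paper's own argument is a single sentence immediately following the statement: the composite $\rho_C\circ\pi_{C,B}\circ\rho_B^{-1}$ preserves $\prec$, $\tle$, and $<_{\mathrm{lex}}$ on the stems that survive the projection, and since the domain spaces $D_B$, $D_C$ are defined entirely in terms of those three relations, the map carries $\mathcal{E}_B$ into $\mathcal{E}_C$.  Your inductive unpacking is a legitimate, more explicit route to the same conclusion, and the first half of your argument (transporting $\pi_{B,C}$ through $\rho_B,\rho_C$, inducting on rank, noting $\prec$-monotonicity and commutation with initial-segment closures) lines up with what the paper is asserting in compressed form.

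There is, however, a genuine wobble in your final paragraph.  You correctly identify that the delicate point is whether the interleaving conditions (2)(a),(b) of Definition~\ref{defn.domainspace} survive projection, and you correctly observe that $\tilde\pi$ is $\prec$-monotone and commutes with taking closures under initial segments.  But you then hedge: ``one can always thin the image \dots\ possibly after passing to a $\sse$-smaller member of $\mathcal{E}_C$ --- which is harmless.''  This is not harmless.  If thinning were genuinely required, the assignment $X\mapsto\tilde\pi[X]$ would not land in $\mathcal{E}_C$, and you would have no well-defined induced map at all --- only the weaker statement that each $\tilde\pi[X]$ \emph{contains} a member of $\mathcal{E}_C$, which is not what the Fact asserts.

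No thinning is needed.  Your own $\prec$-monotonicity observation is already enough: conditions (2)(a),(b) are stated purely as $\prec$-separations between (closures of) successive full-approximation blocks, and a map that preserves $\prec$ and commutes with $\widehat{(\cdot)}$ sends such separations to separations.  By induction on rank, the projected blocks $\tilde\pi[S_{n_i}]$ lie in $D_{C_{n_i}}$ and their full approximations are the images of the full approximations of $S_{n_i}$, so the staircase transfers verbatim.  Drop the thinning escape hatch and your argument is complete --- and is then a fleshed-out version of exactly what the paper's one-liner claims.
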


This follows from  noting that the map $\rho_C\circ\pi_{C,B}\circ\rho_B^{-1}:\mathcal{E}_B\ra\mathcal{E}_C$ 
preserves that
  $\prec$, $\tle$, and $<_{\mathrm{lex}}$ structures on the stems which are not projected,
where $\pi_{C,B}$ denotes the projection map from $B$ to $C$.

\begin{example}[The Ellentuck space based on the Shreier barrier]\label{example.E_shreier}
Let $\mathcal{S}$ denote the {\em Shreier barrier}, the set $\{b\in[\om]^{<\om}: \lh(b)=\max(b)+1\}$.
Then the first few members in the $\prec$-well-ordering of $S_{\mathcal{S}}$  are 
$()\prec (0)\prec (1)\prec (1,1)\prec (1,2)\prec (2)\prec (2,2)\prec(2,2,2)\prec (1,3)\prec (2,2,3)\prec (2,3)\prec(2,3,3)\prec (3)\prec (3,3)\prec (3,3,3)\prec (3,3,3,3)\prec(1,4)\prec\dots$.
The tree structure of sequence space $S_{\mathcal{S}}$  is the following.

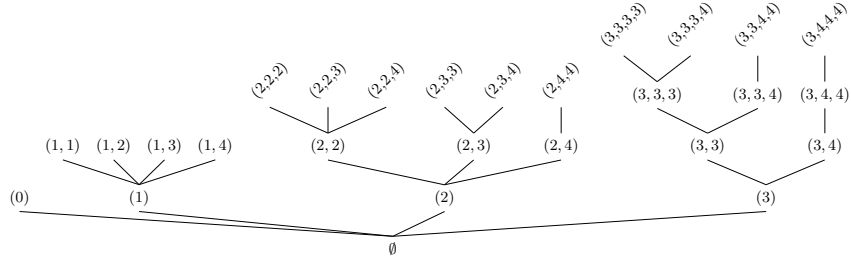
\begin{figure}[\h]
\centering
{\footnotesize
\begin{tikzpicture}[scale=.65,grow'=up, level distance=30pt,sibling distance=.1cm]
\tikzset{grow'=up}
\Tree [.$\emptyset$ [.$(0)$ ][.$(1)$ [.$(1,1)$  ] [.$(1,2)$ ] [.$(1,3)$ ]  [.$(1,4)$ ] ][.$(2)$ [.$(2,2)$ [.$\rotatebox{45}{(2,2,2)}$ ] [.$\rotatebox{45}{(2,2,3)}$ ] [.$\rotatebox{45}{(2,2,4)}$ ]] [.$(2,3)$ [.$\rotatebox{45}{(2,3,3)}$ ][.$\rotatebox{45}{(2,3,4)}$ ]  ] [.$(2,4)$ [.$\rotatebox{45}{(2,4,4)}$ ]  ] ][.$(3)$  [.$(3,3)$ [.$(3,3,3)$ [.$\rotatebox{45}{(3,3,3,3)}$ ] [.$\rotatebox{45}{(3,3,3,4)}$ ]] [.$(3,3,4)$ [.$\rotatebox{45}{(3,3,4,4)}$ ] ] ]
 [.$(3,4)$ [.$(3,4,4)$ [.$\rotatebox{45}{(3,4,4,4)}$ ] ]  ] ]]
\end{tikzpicture}
}
\caption{$S_{\mathcal{S}}$}
\end{figure}

The maximum member of the space $\mathcal{E}_{\mathcal{S}}$ is $\bW_{\mathcal{S}}$.

\begin{figure}[\h]
\centering
{\footnotesize
\begin{tikzpicture}[scale=.55,grow'=up, level distance=30pt,sibling distance=.1cm]
\tikzset{grow'=up}
\Tree [.$\emptyset$ [.$\{0\}$ ][.$\{1\}$ [.$\{1,2\}$  ] [.$\{1,3\}$ ] [.$\{1,7\}$ ]  [.$\{1,15\}$ ] ][.$\{4\}$ [.$\{4,5\}$ [.$\rotatebox{45}{\{4,5,6\}}$ ] [.$\rotatebox{45}{\{4,5,8\}}$ ] [.$\rotatebox{45}{\{4,5,16\}}$ ]] [.$\{4,9\}$ [.$\rotatebox{45}{\{4,9,10\}}$ ][.$\rotatebox{45}{\{4,9,17\}}$ ]  ] [.$\{4,18\}$ [.$\rotatebox{45}{\{4,18,19\}}$ ]  ] ][.$\{11\}$  [.$\{11,12\}$ [.$\{11,12,13\}$ [.$\rotatebox{45}{\{11,12,13,14\}}$ ] [.$\rotatebox{45}{\{11,12,13,20\}}$ ]] [.$\{11,12,21\}$ [.$\rotatebox{45}{\{11,12,21,22\}}$ ] ] ]
 [.$\{11,23\}$ [.$\{11,23,24\}$ [.$\rotatebox{45}{\{11,23,24,25\}}$ ] ]  ] ]]
\end{tikzpicture}
}
\caption{$\mathbb{W}_{\mathcal{S}}$}
\end{figure}
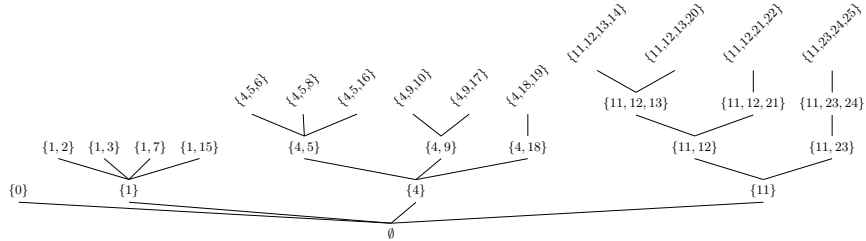

The $k$-th restriction of $\bW_{\mathcal{S}}$ consists of the $k$ $\prec$-least members of  $\bW_{\mathcal{S}}$; for instance, $r^{\mathcal{S}}_1(\bW_{\mathcal{S}})=\{\{0\}\}$,
$r^{\mathcal{S}}_3(\bW_{\mathcal{S}})=\{\{0\},\{1,2\},\{1,3\}\}$, and 
$r^{\mathcal{S}}_5(\bW_{\mathcal{S}})=\{\{0\},\{1,2\},\{1,3\},\{1,7\},\{4,5,8\}\}$.

The first full approximation of $\bW_{\mathcal{S}}$ is $\fa^{\mathcal{S}}_1(\bW_{\mathcal{S}})=\{\{0\}\}
=r^{\mathcal{S}}_1(\bW_{\mathcal{S}})$.
The second full approximation is
$\fa^{\mathcal{S}}_2(\bW_{\mathcal{S}})=\{\{0\},\{1,2\}\}=
r^{\mathcal{S}}_2(\bW_{\mathcal{S}})$.
$\fa^{\mathcal{S}}_3(\bW_{\mathcal{S}})=
\{\{0\},\{1,2\}, \{1,3\},\{4,5,6\}\}=
r^{\mathcal{S}}_4(\bW_{\mathcal{S}})$
and 
$\fa^{\mathcal{S}}_4(\bW_{\mathcal{S}})$ equals the following:
$$
\{\{0\},\{1,2\}, \{1,3\},\{4,5,6\},\{1,7\},\{4,5,8\}, \{4,9,10\},\{11,12,13,14\}\},$$
which is equal to $r^{\mathcal{S}}_8(\bW_{\mathcal{S}})$.
We point out that for each $k<\om$, $\min(\bW_{\mathcal{S}}(k)\setminus r^{\mathcal{S}}_k(\bW_{\mathcal{S}}))>
\max(r^{\mathcal{S}}_k(\bW_{\mathcal{S}}))$.
\end{example}

\begin{notation}\label{notn.EB_a}
$\mathcal{AE}^B_k$ denotes the set $\{r^B_k(X):X\in\mathcal{E}_B\}$, and $\mathcal{AE}^B$ denotes $\bigcup_{k<\om}\mathcal{AE}^B_k$.
For $X\in\mathcal{E}_B$ and $w,w'\in X$, note that $\max(w)<\max(w')$ if and only if $\psi_B^{-1}(w)\prec  \psi_B^{-1}(w')$.
Thus, we will often also use $\prec$ to denote the well-ordering of members of $X$.
For $n<\om$, $X(n)$ denotes the $w\in X$  such that $r_n(X)\cup \{w\}=r_{n+1}(X)$.
Note that  $X(n)$ 
 has the $n$-th smallest value $\max(w)$ in $X$, and is the $n$-th least member in the $\prec$ ordering of $X$.

Given $a\in\hat{B}\setminus B$,
let $B_a$ denote $\{b\in B:b\tr a\}$ and
$w_a=\rho_B(a)$.
For $X\in\mathcal{E}_B$
and
any $w_*\in\hat{X}\setminus X$, 
 let $X_{w_*}$ denote $\{w\in X:w\tr w_*\}$.
Define
 $\mathcal{E}_{B_a}=\{X_{w_a}:X\in\mathcal{E}_B\}$.
Further, let $n=\max(a)$, and let $\delta_n:[n+1,\om)\ra\om$ be the `shift down by $n+1$' map given by $\delta_n(k)=k-n-1$.
Letting  $B^a$ denote $\{\delta_n(b\setminus a):b\in B_a\}$, we see that $B^a$ 
 is a uniform barrier on $\om$.
\end{notation}

\begin{rem}
The idea behind the full approximation map is  to ensure that our inductive construction yields the property that given any $a\in\hat{B}\setminus B$,
the collection $\{X_{w_a}:X\in\mathcal{E}_B\}$ is isomorphic to
the space $\mathcal{E}_{B^a}$.
This next fact is one of the key properties of these spaces, justifying the use of  terminology of infinite dimensional {\em Ellentuck space}.
\end{rem}

\begin{fact}\label{fact.important}
For $a\in\hat{B}\setminus B$,
the space $\mathcal{E}_{B_a}$ is isomorphic to   $\mathcal{E}_{B^a}$, via the map $X_{w_a} \mapsto \{  \rho_{B^a}\circ\delta_n(  \rho_B^{-1}(w)    \setminus a):w\in X_{w_a}\}$ for $X_{w_a}\in \mathcal{E}_{B_a}$,
where $n=\max(a)$.
\end{fact}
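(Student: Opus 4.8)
\textbf{Proof plan for Fact \ref{fact.important}.}

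The plan is to prove the isomorphism by induction on $\rank(B)$, peeling off the minimum of $a$ and reducing to the recursive clause in Definition \ref{defn.domainspace}. First I would set up notation: write $n = \max(a) = \min(a)$ is \emph{not} what we want — rather, let $m = \min(a)$, so that $a = \{m\} \cup a'$ where $a'$ has $\min(a') > m$, and $a' \setminus \{m\}$ (shifted down by $m+1$) is an element of $\widehat{C^m} \setminus C^m$ in the notation of Definition \ref{defn.domainspace}, since $B_{\{m\}}$ is $\beta$-uniform of rank less than $\rank(B)$ and $C^m$ is its downward shift. The base case is $\rank(B) = 1$, i.e. $B = [\om]^1$: then $\hat B \setminus B = \{\emptyset\}$, so $a = \emptyset$, $w_a = \emptyset$, $B^a = B$, $\delta_n$ is the identity since $n = \max(\emptyset)$ should be read as $-1$ (or simply: the shift is trivial), and the map in the statement is the identity on $\mathcal{E}_{[\om]^1}$. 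So the base case is immediate.

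For the inductive step, fix a uniform barrier $B$ of rank $\al$ and $a \in \hat B \setminus B$. The key observation is that $\mathcal{E}_{B_a}$ only ``sees'' the part of each $X \in \mathcal{E}_B$ lying strictly above $w_a = \rho_B(a)$; by clause (1) of the simultaneous definition of $D_B$ and by Notation \ref{notn.EB_a}, restricting to nodes extending $\rho_B(\{m\})$ (where $m = \min(a)$) yields exactly the sets $S_{n_i} \in D_{B_{n_i}}$ for $n_i$ corresponding to $m$, and $D_{B_m} = \{\upsilon_m(S) : S \in D_{C^m}\}$. So $\mathcal{E}_{B_a}$, restricted to nodes extending $w_{\{m\}}$, corresponds via $\upsilon_m$ and the identifications $\rho$ to $\mathcal{E}_{(C^m)_{a''}}$ where $a''$ is the image of $a$ under the down-shift $\delta_m$ followed by deleting $\{m\}$; since $\rank(C^m) < \al$ and $a'' \in \widehat{C^m} \setminus C^m$, the induction hypothesis applies and gives $\mathcal{E}_{(C^m)_{a''}} \cong \mathcal{E}_{(C^m)^{a''}}$. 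Finally one checks $(C^m)^{a''} = B^a$ (both are the down-shift of $\{b \setminus a : b \in B, b \tr a\}$, computed in two stages versus one) and that composing the down-shift maps matches the single map $X_{w_a} \mapsto \{\rho_{B^a} \circ \delta_n(\rho_B^{-1}(w) \setminus a) : w \in X_{w_a}\}$ of the statement, with $n = \max(a)$.

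It remains to verify that the map is a \emph{bijection of the spaces}, not just of underlying sets, i.e. that $S \in D_{B_a}$ (equivalently $X_{w_a} \in \mathcal{E}_{B_a}$) if and only if its image lies in $D_{B^a}$. The domain membership condition in Definition \ref{defn.domainspace} for $D_{B_a} = D_{B_m}|(\text{nodes above }a)$ is phrased purely in terms of the $\prec$-ordering of the relevant sequence sets and the full approximation maps $\fa^{B_{n_i}}_k$, all of which are preserved under the shift maps $\upsilon$, $\delta$ and the reindexings $\rho$ (these are all order-isomorphisms for $\prec$, $\tle$ and $<_{\mathrm{lex}}$ by Remark \ref{rem.precorderW} and the construction of $\upsilon_n$). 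The one subtlety is bookkeeping: the condition defining $D_B$ interleaves the approximations $\fa^{B_{n_0}}, \fa^{B_{n_1}}, \dots$ across \emph{all} immediate successors of $\emptyset$, whereas after restricting above $w_{\{m\}}$ we only retain one such branch; but since $\mathcal{E}_{B_a}$ is defined as $\{X_{w_a} : X \in \mathcal{E}_B\}$ — i.e. we are taking the image of the whole space — every legal $D_{B_m}$-configuration of that single branch does arise from some $S \in D_B$ (pad the other branches arbitrarily, using the ``tail is free'' observation in the Remark after Definition \ref{defn.domainspace}), so the restriction map $\mathcal{E}_B \to \mathcal{E}_{B_a}$ is onto $\{X_{w_a}\}$ with the claimed structure. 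I expect this bookkeeping — carefully matching the interleaved full-approximation conditions of $D_B$ with those of $D_{B^a}$ under the chain of shift maps — to be the main obstacle, though it is conceptually routine once the correct indexing dictionary is written down; everything else follows from the construction by a direct induction.
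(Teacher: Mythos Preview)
Your proposal is essentially correct and, like the paper, relies on the recursive structure of the domain spaces; the paper's own justification is only the single sentence ``The fact is proved by fixing $B$ and doing a straightforward induction on the rank of $B_a$, using the recursive definition of the domain spaces $D_{B_a}$.''

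The one structural difference is the inductive parameter: the paper fixes $B$ and inducts on $\rank(B_a)$ (in effect extending $a$ downward toward the leaves), whereas you induct on $\rank(B)$ and peel off $m=\min(a)$, reducing to the already-handled barrier $C^m$ of strictly smaller rank. Both schemes unwind the same recursion in Definition~\ref{defn.domainspace}, just from opposite ends. Your route has the mild advantage that the reduction step matches exactly the clause $D_{B_m}=\{\upsilon_m(S):S\in D_{C^m}\}$ as written, so the bookkeeping dictionary is explicit; the paper's route is arguably cleaner conceptually because it stays inside a single ambient $B$ throughout.

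Your identification of the surjectivity (``pad the other branches'') as the main nontrivial point is right and goes beyond what the paper actually writes down. One cautionary note: the phrase ``pad the other branches arbitrarily'' is slightly too loose --- given a prescribed $T\in D_{B_m}$ playing the role of $S_{n_0}$, the interleaving conditions (2)(a)--(b) force each subsequent branch index $n_i$ and its first approximations to land in the $\prec$-gaps between successive increments $\widehat{\fa^{B_m}_{k+1}(T)}\setminus\widehat{\fa^{B_m}_k(T)}$. That such choices are always available follows because those gaps are nonempty (the increments are strictly $\prec$-increasing) and because one is free to choose $n_i$ and the tail of each $S_{n_i}$; but this needs to be said, not merely asserted. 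Once that is checked, your composition $(C^m)^{a''}=B^a$ and the matching of shift maps complete the argument.
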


In particular, for each $b\in B$, letting $a=b\setminus\{\max(b)\}$, $\mathcal{E}_{B_a}$ is isomorphic to {\em the} Ellentuck space.
The fact is proved by fixing $B$ and doing a straightforward induction on the rank of $B_a$, using the recursive definition of the domain spaces $D_{B_a}$.

The next fact shows that the set of all 1-extensions of a given $u\in\mathcal{AE}^B$ is also isomorphic to $\mathcal{E}_C$ for some uniform front $C$.

\begin{fact}\label{fact.inductive}
Let   $u\le_{\fin} X\in\mathcal{E}_B$.
There is a unique $w_u\in\hat{X}\setminus X$ such that $r^B_{|u|+1}[u,X]$ is equal to the set
\begin{equation}
\{u\cup \{w\}\in X:  w\vartriangleright w_u\mathrm{\ and\ } \min(w\setminus w_u)>\max(u)\}.
\end{equation}
Let $X_u$ denote $\{w\in X:w\tr w_u$ and $\min(w\setminus w_u)>\max(u)\}$, and let $a_u$ denote $\rho_B^{-1}(w_u)$.
Then $X_u=
\{v(|u|):v\in r^B_{|u|+1}[u,X]\}$ and  is a member of $\mathcal{E}_{B_{a_u}}$.
In fact, $\{X_u:X\in\mathcal{E}_B$ and $u\le_{\fin}X\}$ 
 is  a dense subset of  
 $\mathcal{E}_{B_{a_u}}$.
\end{fact}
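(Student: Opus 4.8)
The plan is to prove the statement in two stages: first identify the "shape" of the set of one-extensions combinatorially, then identify that shape with $\mathcal{E}_{B_{a_u}}$ via the recursive structure already established, and finally upgrade the isomorphism statement to the density statement. For the first stage, I would fix $u \le_{\fin} X$ and let $m = |u|$; by definition $u = r^B_m(X)$ after possibly passing to a $Y \le X$ with $r_m(Y) = u$, so without loss of generality $u = r^B_m(X)$. The members $v \in r^B_{m+1}[u,X]$ are exactly $u \cup \{w\}$ where $w$ ranges over those elements of $X$ that may legitimately serve as the $(m+1)$-st $\prec$-least element of some $S' \in D_B$ with $\fa$-structure agreeing with that of $u$. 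Here is where I would invoke the third point of the Remark following Definition~\ref{defn.domainspace}: the last member of a full approximation is free to range over a tail of the domain, and more precisely, unrolling the recursive definition of $D_B$ and the clauses (2)(a),(2)(b), the set of admissible $w$ is precisely $\{w \in X : w \tr w_u \text{ and } \min(w\setminus w_u) > \max(u)\}$ for a uniquely determined node $w_u \in \hat X \setminus X$. The uniqueness of $w_u$ follows because the $\prec$-well-ordering of $\hat X$ (see Remark~\ref{rem.precorderW}) forces a single threshold node below which all extensions lie; I would extract $w_u$ as the $\tle$-maximal common initial segment dictated by which sub-block $S_{n_i}$ the index $m$ falls into. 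This is the step I expect to be the main obstacle: carefully reading off $w_u$ from the bookkeeping in Definition~\ref{defn.domainspace}, since the "staircase" interleaving of the $\fa^{B_{n_i}}_j(S_{n_i})$ blocks means $m+1$ could land in any of the blocks, and one must check in each case that the admissible continuations form exactly a tail $\bigcup\{X_k : k \ge \text{something}\}$ restricted above $w_u$.

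For the second stage, once $X_u = \{v(|u|) : v \in r^B_{|u|+1}[u,X]\}$ is pinned down, the claim that $X_u \in \mathcal{E}_{B_{a_u}}$ (where $a_u = \rho_B^{-1}(w_u)$) is essentially a re-packaging of Fact~\ref{fact.important}. I would argue: $X_u \subseteq X_{w_u}$, and $X_{w_u} \in \mathcal{E}_{B_{a_u}}$ by definition of $\mathcal{E}_{B_{a_u}} = \{X_{w_a} : X \in \mathcal{E}_B\}$ in Notation~\ref{notn.EB_a}; the extra condition $\min(w\setminus w_u) > \max(u)$ cuts $X_{w_u}$ down by a finite initial segment, and by the third point of the Remark after Definition~\ref{defn.domainspace} (the "free tail" observation) such a finite cut of a member of $D_{B_{a_u}}$ is again a member of $D_{B_{a_u}}$, hence $X_u \in \mathcal{E}_{B_{a_u}}$. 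Equivalently, via Fact~\ref{fact.important}, $\mathcal{E}_{B_{a_u}} \cong \mathcal{E}_{B^{a_u}}$ and the image of $X_u$ is a member of $\mathcal{E}_{B^{a_u}}$ obtained by restricting $\mathbb{W}_{B^{a_u}}$-type data; the point is only that the domain-space axioms are preserved under this truncation, which is exactly clause (2) of the definition of $D_{B}$ applied one rank down.

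For the density statement, I would fix $a = a_u$ and a target $Z \in \mathcal{E}_{B_a}$, and must produce $X \in \mathcal{E}_B$ and $u \le_{\fin} X$ with $w_u = w_a$ and $X_u \le Z$ (in the sense of $\mathcal{E}_{B_a}$, i.e. $X_u \subseteq Z$). The construction: start with any $X' \in \mathcal{E}_B$ whose restriction to extensions of $w_a$ equals $Z$ — such an $X'$ exists because we can build a domain $S' \in D_B$ by prescribing $S'_{n_i} = $ (the domain of $Z$) on the relevant block and filling the other blocks with the corresponding pieces of $\mathbb{W}_B$, then checking the interleaving conditions (2)(a),(2)(b) hold after thinning; if they do not hold outright, apply the Abstract Nash-Williams Theorem (Theorem~\ref{thm.ANW}) or a direct diagonalization to pass to a further member where they do, without disturbing $Z$ above $w_a$. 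Then take $u = r^B_{|a|}(X')$ with the property that $w_u = w_a$ — which holds by the first stage once $m = |u|$ is chosen to be the length index at which $w_a$ becomes the threshold node — and observe $X'_u$ is a cofinite (in the $\mathcal{E}_{B_a}$ sense, i.e. finite-initial-segment) subset of $Z$, hence $\le Z$. Since $[u', X'']$-neighborhoods in $\mathcal{E}_{B_a}$ are generated by such truncations, the collection $\{X_u : X \in \mathcal{E}_B,\ u \le_{\fin} X\}$ meets every basic open set of $\mathcal{E}_{B_a}$, establishing density. The only care needed is the compatibility of the $\prec$-orderings under $\rho_B$ versus $\rho_{B_a}$, which is handled by Notation~\ref{notn.EB_a} and Fact~\ref{fact.important}.
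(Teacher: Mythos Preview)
Your overall plan is sound and in the same spirit as the paper's argument, but the paper organizes the proof as an explicit induction on $\rank(B)$, splitting into two cases: either $r^B_{|u|+1}(X)$ is a full approximation $\fa^B_k(X)$ (in which case $w_u=\emptyset$ and $X_u=\{w\in X:\min(w)>\max(u)\}$, and the third point of the Remark after Definition~\ref{defn.domainspace} applies \emph{directly}), or it is not (in which case the recursive definition of $D_B$ hands the question down to some $\mathcal{E}_{B_a}$ of strictly smaller rank, where the induction hypothesis applies). Your attempt to avoid the explicit induction by treating all cases uniformly runs into two genuine gaps.

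First, in your stage~2 you appeal to the Remark's third point to conclude that truncating $X_{w_u}$ by the condition $\min(w\setminus w_u)>\max(u)$ yields a member of $\mathcal{E}_{B_{a_u}}$. But that Remark only asserts this for the \emph{specific} thresholds $\max(r^B_n(S))$ arising from full approximations, not for arbitrary thresholds. The content you need --- that the threshold $\max(u)$ coming from an arbitrary $u\le_{\fin}X$ is of this form within $\mathcal{E}_{B_{a_u}}$ --- is exactly what the paper's induction establishes (it shows $X_u$ is a set of last members of a full approximation in $\mathcal{E}_{B_{a_u}}$ extending $u\cap X_{w_u}$). Your ``free tail'' shortcut assumes the conclusion.

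Second, your density argument proposes to invoke the Abstract Nash-Williams Theorem as one option. This is circular: Fact~\ref{fact.inductive} precedes and is used (via the Fusion Lemmas) in the proof of Theorem~\ref{thm.E_BtRs}, which is what establishes that $\mathcal{E}_B$ is a topological Ramsey space, hence that Theorem~\ref{thm.ANW} applies. Your alternative ``direct diagonalization'' would need to be spelled out; done carefully, it again collapses to the rank induction. A minor additional point: your ``without loss of generality $u=r^B_m(X)$'' replaces $X$ by some $Y\le X$, but the set $r^B_{|u|+1}[u,X]$ depends on $X$ itself, not on $Y$; while this is harmless for identifying $w_u$ (which depends only on $u$), you must return to the original $X$ to characterize $X_u$.
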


\begin{proof}
The proof follows by induction on $\rank(B)$ using the recursive definition of $r_{|u|+1}^B[u,X]$.
The statement we are actually proving is that there is a unique $w_u$ such that $X_u$ is the collection of last members of a full approximation in $\mathcal{E}_{B_{a_u}}$ which extend $u\cap X_{w_u}$ by one element.
For $\rank(B)<\om$, this fact was  shown in \cite{DobrinenJSL15}.
Let $B$ have infinite rank and suppose the fact holds for all $C$ with smaller rank.
If $r_{|u|+1}^B(X)=\fa_k^B(X)$ for some $k$, then $X_u$ is exactly $\{w\in X: \min(w)>\max(u)\}$, which is a member of $\mathcal{E}_B$.
In this case, $w_u=\emptyset$ and $\{X_u:X\in\mathcal{E}_B$ and $u\le_{\fin} X$ is a dense subset of $\mathcal{E}_B$.
If $r_{|u|+1}^B(X)$ is not a full approximation, 
then 
by the recursive construction of $D_B$, there is some 
 $a\in\hat{B}\setminus B$ such that the 1-extensions of $u$ in $\mathcal{E}_B$ are according to 1-approximations in the structure of $D_{B_a}$.
Thus, by the induction hypothesis, there is a unique $w_u\tre\rho_B(a)$ such that  the collection of 1-extensions of $u$ into $X$ in $\mathcal{E}_B$ is exactly
$X_u$, which is a member of $\mathcal{E}_{B_a}$.
Then $a_u=a$.
\end{proof}

We  point out that for $u\le_{\fin}X\in\mathcal{E}_B$, letting 
$S=\psi_B^{-1}(X)$ and
$S_u=\psi_B^{-1}(X_u)$, then 
 $S_u$ is equal to $\{s\in S: s\tr \psi_B^{-1}(w_u)$ and $\min(\psi_B(s)\setminus w_u)>\max(u)\}$ and is a member of  $D_{B_{a_u}}$.

In the next Lemma, 
use the following notation:
For $S\in D_B$, let $S_n$ denote $\{s\in S:S\tre (n)\}$; and  
for 
 $X\in\mathcal{E}_B$,  let $X_n$ denote $\{w\in X: w\trianglerighteq \rho_B(\{n\})\}$, and note that 
this is  equal to  $\{w\in X: w\trianglerighteq \psi_B((n))\}$.
Note that for each $n$,
either $X_n\in\mathcal{E}_{B_n}$ or else $X_n=\emptyset$.

The following two Fusion Lemmas will be used  in numerous diagonal constructions.

\begin{lem}[Fusion]\label{lem.fusion1}
Suppose that $S\in D_B$ and for 
 infinitely many  $n<\om$, $S'_n$ is  a member of $D_{B_n}$ such that $S'_n\sse S_n$.
Then there is an $S''\in D_B$ such that for each $n<\om$ for which $S''_n\ne\emptyset$, we have  $S''_n\sse S_n'$.

Hence,
if $X\in\mathcal{E}_B$ and for infinitely many $n<\om$, 
$Y_n\in\mathcal{E}_{B_n}$ with $Y_n\sse X_n$,
then there is a $Z\le X$ in $\mathcal{E}_B$ such that 
for all $\rho_B(\{n\})\in \hat{Z}$,
$Z_n\sse Y_n$.
\end{lem}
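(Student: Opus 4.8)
The strategy is to build $S''$ from $S$ by a diagonal construction that interleaves, level by level, the full approximations coming from the prescribed subdomains $S'_n$, and to use the equivalent reformulation of condition (2) in Definition~\ref{defn.domainspace} (displayed in the Remark following that definition) as the bookkeeping device. First I would fix the enumeration $\{(n_0), (n_1), \dots\}$ of $\hat S\cap\om^1$ in $\prec$-order and, passing to a subsequence if necessary, assume every $n_i$ is one of the indices for which $S'_{n_i}\in D_{B_{n_i}}$ is given (the remaining $n$'s will simply be discarded, i.e.\ $S''$ will have no sequences $\tre(n)$ for those $n$; this is permitted since we only require $S''_n\sse S'_n$ when $S''_n\ne\emptyset$). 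We want $S'' = \bigcup_i S''_{n_i}$ with each $S''_{n_i}\sse S'_{n_i}$, chosen so that the chain of $\prec$-inequalities
\begin{equation}
\fa^{B_{n_0}}_2(S''_{n_0})\prec \widehat{\fa^{B_{n_1}}_1(S''_{n_1})}\prec \widehat{\fa^{B_{n_0}}_3(S''_{n_0})}\setminus\widehat{\fa^{B_{n_0}}_2(S''_{n_0})}\prec\dots
\end{equation}
holds; by the Remark this is exactly what certifies $S''\in D_B$.

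\textbf{Key steps.} The recursion runs as follows. Having chosen initial segments of $S''_{n_0},\dots,S''_{n_{k-1}}$ far enough that $\fa^B_k(S'')$ is determined, I need to extend each $S''_{n_i}$ ($i<k$) by a bounded amount and to choose an initial segment of $S''_{n_k}$, so that the next block of inequalities in the displayed chain is satisfied. The crucial observation is the ``free tail'' phenomenon recorded in the third part of the Remark after Definition~\ref{defn.domainspace}: for any $T\in D_{B_m}$ and any $j$, once the first $j-1$ members of a full approximation $\fa^{B_m}_j(T)$ are fixed, the last member (hence $\widehat{\fa^{B_m}_j(T)}\setminus\widehat{\fa^{B_m}_{j-1}(T)}$ modulo the already-fixed part) ranges over a tail of $T$; so by deleting an appropriate finite initial chunk of each $S'_{n_i}$ we can push the relevant full-approximation pieces $\prec$-past any prescribed finite set. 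Concretely, at stage $k$: let $N$ be larger than $\max$ of everything already placed in $\fa^B_k(S'')$; shrink $S'_{n_k}$ to its tail $\{s\in S'_{n_k}: \min(s)> N\}$ (still in $D_{B_{n_k}}$, by the recursive structure of the domain spaces and the fact that a tail of a domain is a domain), take its first full approximation as the next block; then for $i=k-1,\dots,0$ shrink the remainder of $S'_{n_i}$ so that $\fa^{B_{n_i}}_{k+1-i}$ exceeds the block just placed, using that a tail of a domain space is again a domain space. Iterating, $S'' := \bigcup_i S''_{n_i}$ lies in $D_B$, each $S''_{n_i}\sse S'_{n_i}\sse S_{n_i}$, and for the discarded $n$ we have $S''_n=\emptyset$. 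The second ("hence") clause is then immediate by transporting through $\psi_B$: set $S=\psi_B^{-1}(X)$, $S'_n=\psi_B^{-1}(Y_n)$ for the given $n$, apply the first part to get $S''$, and let $Z=\psi_B(S'')$; since $\psi_B$ is an order isomorphism onto $\mathcal{E}_B$ and $S''\sse S$, we get $Z\le X$, and $Z_n=\psi_B(S''_n)\sse\psi_B(S'_n)=Y_n$ whenever $\rho_B(\{n\})\in\hat Z$, i.e.\ whenever $S''_n\ne\emptyset$.

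\textbf{Main obstacle.} The only real point requiring care is the bookkeeping in the nested shrinking: when at stage $k$ I shrink $S'_{n_i}$ for $i<k$ to make its $(k+1-i)$-th full approximation large, I must not disturb the initial segment of $S''_{n_i}$ already committed at earlier stages, and I must verify that the resulting set is still a legitimate domain in $D_{B_{n_i}}$. This is handled by the observation that the "free tail" description applies to the \emph{last} coordinate of each full approximation only, so the committed initial portion $\fa^{B_{n_i}}_{k-i}(S''_{n_i})$ stays fixed while the new last coordinate of $\fa^{B_{n_i}}_{k+1-i}$ is free to move up a tail; and that replacing a domain $T\in D_{B_m}$ by $\{t\in T:\text{committed part}\}\cup(\text{a tail of }T)$ again satisfies clauses (1)--(2) of the domain definition, by induction on $\rank(B)$ together with the inductive hypothesis that tails of lower-rank domains are domains. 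Once this is set up, the construction converges because at stage $k$ the sets $S''_{n_0},\dots,S''_{n_k}$ each receive at least one more permanently-committed member, so $S''$ is infinite below each $(n_i)$ and $\hat{S''}\cap\om^1$ is infinite.
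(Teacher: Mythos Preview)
Your approach is essentially the paper's: build $S''$ stage by stage by constructing its full approximations $\fa^B_k(S'')$, selecting a thinned subsequence of indices $n_i$ and placing the pieces $\fa^{B_{n_i}}_{k+1-i}(S''_{n_i})$ inside $S'_{n_i}$ so that the $\prec$-chain in the Remark after Definition~\ref{defn.domainspace} is satisfied. The ``hence'' clause is likewise obtained by transporting along $\psi_B$.

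There is one bookkeeping slip. At stage $k$ you place $\fa^{B_{n_k}}_1$ first and then work downward through $i=k-1,\dots,0$, forcing each new block to $\prec$-exceed the one just placed. That yields
\[
\widehat{\fa^{B_{n_k}}_1}\prec\widehat{\fa^{B_{n_{k-1}}}_2}\setminus\widehat{\fa^{B_{n_{k-1}}}_1}\prec\cdots\prec\widehat{\fa^{B_{n_0}}_{k+1}}\setminus\widehat{\fa^{B_{n_0}}_k},
\]
which is the reverse of what condition (2) in Definition~\ref{defn.domainspace} requires. The equivalent chain in the Remark has, in passing from $\fa^B_k$ to $\fa^B_{k+1}$, the new part of $\fa^{B_{n_0}}_{k+1}$ $\prec$-first, then that of $\fa^{B_{n_1}}_k$, and so on, with $\widehat{\fa^{B_{n_k}}_1}$ last. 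Accordingly the paper extends $S''_{n_0}$ first (past $\max\fa^B_k(S'')$), then $S''_{n_1}$, etc., and only then chooses $n_k$ as the least available index above everything placed so far and takes $\fa^{B_{n_k}}_1(S'_{n_k})$. With that order reversed, your argument goes through unchanged.
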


\begin{proof}
Let $S\in D_B$ and $N\in[\om]^{\om}$,
and suppose that for each $n\in N$,
  $S'_n$ is a member of $D_{B_n}$ such that $S'_n\sse S_n$.
The following is a mechanism for constructing $S''\in D_B$ by constructing its full approximations so that for each $(n)\in S''$, $S''_n\sse S'_n$.
Let $n_0$ be the least member of $N$ and define  $\fa^B_2(S'')$ to be $\fa^{B_{n_0}}_2(S'_{n_0})$.
Let $n_1$ be least in $N$ such that 
$n_1>\max \fa^B_2(S'')$.
Then take  $\fa^B_2(S'')$ to be $\fa^{B_{n_0}}_2(S'_{n_0})\cup \fa_1^{B_{n_1}}(S'_{n_1})$. 
Now let $m(2,0)=\max\fa^B_2(S'')$.
Extend $\fa_2^{B_{n_0}}(S''_{n_0})$ to any $\fa_3^{B_{n_0}}(S''_{n_0})\sse S'_{n_0}$ such that 
the minimum of 
$\widehat{\fa_3^{B_{n_0}}(S''_{n_0})}  \setminus \widehat{\fa_3^{B_{n_0}}(S''_{n_0})}$ is greater than $m(2,0)$.
Next, let $m(2,1)$ be the maximum of $\fa_3^{B_{n_0}}(S''_{n_0})$ and extend $\fa_1^{B_{n_1}}(S''_{n_1})$ to any $\fa_2^{B_{n_1}}(S''_{n_1})\sse S'_{n_1}$ such that 
the minimum of 
$\widehat{\fa_2^{B_{n_1}}(S''_{n_1})}  \setminus \widehat{\fa_1^{B_{n_1}}(S''_{n_1})}$ is greater than $m(2,1)$.
Then let $n_2$ be the least member of $N$ greater than  maximum of $\fa_2^{B_{n_1}}(S''_{n_1})$.
Let $\fa^B_3(S'')=
\fa^{B_{n_0}}_3(S''_{n_0})\cup \fa^{B_{n_1}}_2(S''_{n_1})\cup \fa_1^{B_{n_2}}(S'_{n_2})$.
%\begin{nd}
%We can put in a general construction if the referee wants but I do not %think it would aid understanding.
%\end{nd}
Continuing in this manner, one constructs $S'' =\bigcup_{i\ge 2} \fa^B_i(S'')$ in $D_B$ and an infinite subset $\{n_i:i<\om\}\sse N$ such that for each $i<\om$,
$S''_{n_i}\sse S'_{n_i}$, and for each $n$ not equal to some $n_i$, $S''_n=\emptyset$.
\end{proof}

\begin{lem}[Fusion]\label{lem.fusion2}
Let $u\le_{\fin} X\in\mathcal{E}_B$.
Let $N$ be an infinite set of integers greater than $\max(a_u)$,
and for each $n\in N$, let
$Y_n\in\mathcal{E}_{B_{a_n}}$
with
$Y_n\sse \{w\in X:w\tre \rho_B(a_n)\}$, where
 $a_n$ denotes $a_u\cup\{n\}$.
Then there is a $Z\in [u,X]$ such that 
 for each $n>\max(a_u)$, $Z_n\sse Y_n$.

In particular,
given $u\le_{\fin} X\in\mathcal{E}_B$ and $Y_u\in\mathcal{E}_{B_{a_u}}$ such that $Y_u\sse X_u$,
then there is a $Z\in [u,X]$ such that $Z_u\sse Y_u$.
\end{lem}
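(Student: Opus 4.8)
The plan is to reduce the general statement to the special statement and then prove the special statement by a straightforward diagonalization parallel to the proof of Lemma~\ref{lem.fusion1}, but organized along the tree $\hat B$ rather than along a single level. First I would observe that the ``in particular'' clause is just the case $N=\{n:(n)\tr a_u\text{-compatible}\}$ collapsed: indeed $X_u=\bigcup_{n>\max(a_u)}\{w\in X:w\tre\rho_B(a_n)\}$, and by Fact~\ref{fact.important} each $\mathcal{E}_{B_{a_n}}$ is (isomorphic to) the ``next level down'' inside $\mathcal{E}_{B_{a_u}}$; so given $Y_u\sse X_u$ one simply sets $Y_n=(Y_u)_n=\{w\in Y_u:w\tre\rho_B(a_n)\}$, which lies in $\mathcal{E}_{B_{a_n}}$ (or is empty) by the remark following Fact~\ref{fact.inductive}, and the conclusion $Z_u\sse Y_u$ is equivalent to $Z_n\sse Y_n$ for all relevant $n$. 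So it suffices to prove the main assertion.

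For the main assertion I would pass to the domain spaces. Write $S=\psi_B^{-1}(X)\in D_B$ and, for each $n\in N$, let $S'_n=\psi_{B}^{-1}(Y_n)$, which (after the shift-down identification via $\upsilon_n$) is a member of $D_{B_{a_n}}$ contained in $S_{a_n}:=\{s\in S:\psi_B(s)\tre\rho_B(a_n)\}$. The stem $u$ corresponds to a full approximation $\fa^B_{|u|}(S)=\psi_B^{-1}(u)$, and I want to build $S''\in D_B$ with $\psi_B^{-1}(u)\sqsubseteq S''$ (i.e.\ $S''$ has the same first $|u|$-many $\prec$-least elements as $S$, matching $u$) and $S''_n\sse S'_n$ for every $n\in N$ with $S''_n\ne\emptyset$; then $Z=\psi_B(S'')$ works, $Z\in[u,X]$ because $Z\sse X$ and $r^B_{|u|}(Z)=u$. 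The construction is the recursive full-approximation scheme: above the stem, the $1$-extensions of $u$ are governed (by Fact~\ref{fact.inductive} and the recursive definition of $D_B$) by the structure of $D_{B_{a_u}}$, so I build $\fa^B_{|u|+1}(S''),\fa^B_{|u|+2}(S''),\dots$ one block at a time, at each stage (i) enumerating a new node $a_{n_i}$ of $\hat B$ among the $n\in N$ that lies far enough out, (ii) feeding in $\fa^{B_{a_{n_i}}}_1(S'_{n_i})$, and (iii) extending each of the finitely many previously started $\fa^{B_{a_{n_j}}}(S''_{n_j})$ ($j<i$) to one more full approximation inside $S'_{n_j}$ whose newly-added nodes are $\prec$-large enough to respect the interleaving conditions (a) and (b) of Definition~\ref{defn.domainspace}. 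Since each $S'_n\in D_{B_{a_n}}$ we always have room to extend, and the $\prec$-ordering has order type $\om$, so the ``far enough out'' choices are always available; the union $S''=\bigcup_{k}\fa^B_k(S'')$ then lies in $D_B$ by construction, and $S''_n\sse S'_n$ for each $n=n_i$ used, with $S''_n=\emptyset$ otherwise.

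The main obstacle, and the only place requiring care, is verifying that this interleaving can always be arranged simultaneously for the growing finite family of subtrees $S'_{n_0},\dots,S'_{n_i}$ while keeping each block inside the prescribed $S'_{n_j}$: one must check that the ``minimum of the newly added hat-closure exceeds $m(k,j)$'' demands of clauses (a) and (b) are compatible with staying inside $S'_{n_j}$, which holds because $S'_{n_j}\in D_{B_{a_{n_j}}}$ guarantees arbitrarily long full approximations whose later nodes have arbitrarily large $\prec$-rank (equivalently, arbitrarily large $\max$). This is exactly the mechanism already exhibited in the proof of Lemma~\ref{lem.fusion1}, now applied one level up in the recursion: the bookkeeping is identical in spirit, with $B_n$ replaced by $B_{a_n}$ and the level-$1$ sequences $(n)$ replaced by the nodes $a_n=a_u\cup\{n\}$. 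Accordingly I would keep the write-up short, citing the construction in Lemma~\ref{lem.fusion1} and indicating only the changes: build above the stem $u$, index by $a_n$ for $n\in N$, and invoke Facts~\ref{fact.important} and \ref{fact.inductive} to identify the relevant pieces as members of the appropriate smaller-rank spaces.
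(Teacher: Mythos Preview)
Your reduction of the ``in particular'' clause to the main assertion is fine (the paper actually goes the other direction, deducing the special case from the construction for the general one, but either order works).

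The main construction, however, has two genuine gaps.

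First, you write $\fa^B_{|u|}(S)=\psi_B^{-1}(u)$.  This is false in general: $\psi_B^{-1}(u)=r^B_{|u|}(S)$, the $|u|$-th \emph{restriction}, which need not be any full approximation $\fa^B_k(S)$.  So you cannot simply continue the full-approximation scheme from $u$ as if $u$ were one of the $\fa^B_k$'s.

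Second, and more seriously, your inductive step only builds the part of $S''$ sitting above $a_u$: you enumerate nodes $a_{n_i}=a_u\cup\{n_i\}$ and extend inside the $S'_{n_j}$'s.  But $Z\in[u,X]$ is a member of $\mathcal{E}_B$, not of $\mathcal{E}_{B_{a_u}}$; as you extend $u$ to $z_{|u|+1},z_{|u|+2},\dots$, the stems $w_{z_m}$ wander all over $\widehat{\bW}_B$, and many of them do \emph{not} extend $w_u$.  Your description says nothing about how to choose $z_{m+1}(m)$ at those stages.  What you have really sketched is the fusion producing a $U\in\mathcal{E}_{B_{a_u}}$ with $U_n\sse Y_n$ --- i.e., a re-derivation of Lemma~\ref{lem.fusion1} one level down --- not the construction of $Z\in[u,X]$ itself.

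The paper separates these two steps cleanly.  It first \emph{applies} Lemma~\ref{lem.fusion1} inside $\mathcal{E}_{B_{a_u}}$ (via Fact~\ref{fact.important}) to obtain $U\in\mathcal{E}_{B_{a_u}}$ with each nonempty $U_n\sse Y_n$.  Then it builds $Z$ by the obvious one-step-at-a-time recursion: set $z_{|u|}=u$; given $z_m$, if $w_{z_m}\tre w_u$ pick $z_{m+1}\in r_{m+1}[z_m,X]$ with $z_{m+1}(m)\in U$, and otherwise pick any $z_{m+1}\in r_{m+1}[z_m,X]$.  The ``otherwise'' branch is exactly what your write-up is missing.  Once you add it (and drop the spurious identification of $u$ with a full approximation), your argument becomes essentially the paper's.
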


\begin{proof}
Apply Fusion  Lemma \ref{lem.fusion1} in  $\mathcal{E}_{B_{a_u}}$ to
 the collection of $Y_n$,  $n\in N$, to find a $U\in\mathcal{E}_{B_{a_u}}$ such that for each $n\in\om$,
either $U_n=\emptyset$, or else both $n\in N$ and $U_n\sse Y_n$,
where $U_n$ denotes $\{w\in U:w\tre \rho_B(a_n)\}$.
Then  extend $u$ into $X$ recursively as follows.
Let $k=|u|$ and let $z_k=u$.
For $m\ge k$, given $z_m$,
if  $w_{z_m}\tre w_u$, then take $z_{m+1}\in r_{m+1}[z_m,X]$ such that $z_{m+1}(m)\in U$;
otherwise, take any $z_{m+1}\in r_{m+1}[z_m,X]$.
Letting $Z=\bigcup_{m\ge k} z_m$ proves the first part.

The second part follows immediately from the construction method of $Z$ in the first part.
\end{proof}

The next fact shows that any two spaces are comparable, allowing for restrictions below some members.

\begin{fact}\label{fact.EBCemb}
For any uniform barriers $B$ and $C$, there are $X\in\mathcal{E}_B$ and $Y\in\mathcal{E}_C$ such that either the subspace $\mathcal{E}_B|X$ embeds into the subspace $\mathcal{E}_C|Y$, or vice versa.
\end{fact}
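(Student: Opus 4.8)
The plan is to reduce Fact \ref{fact.EBCemb} to Fact \ref{fact.2barrierscompare} together with Fact \ref{fact.projhierarchy}. First I would invoke Fact \ref{fact.2barrierscompare}: since $B$ and $C$ are uniform barriers, by restricting both to a common infinite set $N\sse\om$ and then (via Facts \ref{facts.barriersencompassfronts}(4) and \ref{facts.barriersencompassfronts}(2)) to a further infinite $M\sse N$, we obtain uniform barriers $B\re M$ and $C\re M$ on $M$ with one of $B\re M = C\re M$, $B\re M\tl C\re M$, or $C\re M\tl B\re M$ holding. Relabeling, assume without loss of generality $B\re M\tl C\re M$, i.e. $B\re M$ is a projection of $C\re M$ in the sense defined just after Fact \ref{fact.2barrierscompare}.

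Next I would connect the restricted barriers to subspaces $\mathcal{E}_B|X$ and $\mathcal{E}_C|Y$. The point is that for a uniform barrier $B$ on $\om$ and an infinite $M\sse\om$, there is a natural $X\in\mathcal{E}_B$ so that $\mathcal{E}_B|X$ is isomorphic to $\mathcal{E}_{B\re M}$ (up to reindexing $M$ with $\om$): one takes $X$ to be the image under $\psi_B$ of a domain $S\in D_B$ whose ``spine'' $\hat S\cap\om^1$ is $M$ and which is built by the recursive construction of $D_B$ so that each $S_n$ lives over the corresponding piece of $B_{\{n\}}\re M$; the full-approximation bookkeeping in Definition \ref{defn.domainspace} guarantees such an $S$ exists. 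This is essentially the content already implicit in Facts \ref{fact.important} and \ref{fact.inductive}, and I would state it as an auxiliary observation (or cite the analogous reindexing used there). Choosing such $X\in\mathcal{E}_B$ and $Y\in\mathcal{E}_C$ with the same $M$, we reduce to showing: if $B'\tl C'$ are uniform barriers on the same set, then $\mathcal{E}_{B'}$ embeds into $\mathcal{E}_{C'}$ (or the reverse, which does not occur here since the projection goes one way).

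Finally I would supply the embedding. By Fact \ref{fact.projhierarchy}, since $B'\tl C'$, the projection $\pi_{B',C'}$ induces a \emph{projection} $p:\mathcal{E}_{C'}\ra\mathcal{E}_{B'}$; the argument there notes that $\rho_{B'}\circ\pi_{B',C'}\circ\rho_{C'}^{-1}$ preserves the $\prec$, $\tle$, and $<_{\mathrm{lex}}$ structure on the stems that are not collapsed. To get an \emph{embedding} of $\mathcal{E}_{B'}$ into $\mathcal{E}_{C'}$ rather than a projection the other way, I would build a section: for each node $w$ of a member of $\mathcal{E}_{B'}$ pick the $<_{\mathrm{lex}}$-least (equivalently $\prec$-least) node of $\hat{\bW}_{C'}$ projecting to it that is compatible with choices already made, i.e. choose inside each fiber of $\pi_{C',B'}$ a canonical ``bottom'' branch. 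Doing this coherently along the tree produces a map sending each $X\in\mathcal{E}_{B'}$ to some $\iota(X)\in\mathcal{E}_{C'}$ with $p(\iota(X))=X$, and $\iota$ is an isomorphism onto a subspace $\mathcal{E}_{C'}|\iota(\bW_{B'})$ — again because the only structure that matters for membership in the domain spaces is the $\prec$/$\tle$/$<_{\mathrm{lex}}$ data, which $\iota$ reflects faithfully. Composing with the reindexing isomorphisms back to $\mathcal{E}_B|X$ and $\mathcal{E}_C|Y$ gives the desired embedding.

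I expect the main obstacle to be verifying that the section $\iota$ actually lands in $\mathcal{E}_{C'}$, i.e. that the chosen branches through the fibers of $\pi_{C',B'}$ can be selected uniformly enough that the resulting set of nodes satisfies the full-approximation growth conditions of Definition \ref{defn.domainspace} — one may need to thin $X$ (pass to $\mathcal{E}_{B'}|X'$ for some $X'\le X$) so that successive fiber-choices leave room, and track that the minima of the newly added pieces stay above the maxima of prior full approximations. This is a routine-but-fiddly diagonalization, and the Fusion Lemmas \ref{lem.fusion1} and \ref{lem.fusion2} are exactly the tools to manage it; the rest of the argument is bookkeeping that mirrors the proofs of Facts \ref{fact.important} and \ref{fact.inductive}.
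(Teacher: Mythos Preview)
Your approach diverges from the paper's, which proceeds by a direct induction on $\max(\rank(B),\rank(C))$ using Fusion Lemma~\ref{lem.fusion1}, with the base case of finite ranks imported from \cite{DobrinenJSL15}. Your route through Fact~\ref{fact.2barrierscompare} and a section of the projection is a natural idea, but it has a genuine gap in the final step.

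The problem is that a section of $\pi_{C',B'}$ cannot produce elements of $\mathcal{E}_{C'}$. By the recursive Definition~\ref{defn.domainspace}, any $S\in D_{C'}$ must satisfy $S_{n_i}\in D_{(C')_{n_i}}$ for each $n_i$ in its spine, and each $D_{(C')_{n_i}}$ consists of \emph{infinite} sets (with their own recursive structure). A section $s:B'\to C'$ picks exactly one leaf of $C'$ above each leaf of $B'$, so for $X\in\mathcal{E}_{B'}$ the set $\{s(b):b\in\rho_{B'}^{-1}(X)\}$ has precisely one element above each stem --- it is far too thin to lie in $D_{C'}$. Concretely, take $B'=[\om]^1$ and $C'=[\om]^2$: your section sends an infinite set $X\subseteq\om$ to a set of pairs with exactly one pair per first coordinate, but every member of $\mathcal{E}_2$ has infinitely many pairs above each first coordinate in its spine. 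No amount of thinning $X$ repairs this, since the defect is in the \emph{shape} of the image, not in spacing. An embedding of $\mathcal{E}_{B'}$ into $\mathcal{E}_{C'}$ must send each $X$ to something that is itself ``$C'$-shaped'', which a pointwise section cannot do; this is exactly what the inductive construction via Fusion accomplishes, building the image level by level so that each column is a full member of the appropriate $\mathcal{E}_{(C')_n}$.

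A secondary issue: your reduction in step~2, that restricting $B$ to $M$ yields $\mathcal{E}_B|X\cong\mathcal{E}_{\delta(B|M)}$ for some $X\in\mathcal{E}_B$, is plausible but is not what Facts~\ref{fact.important} and~\ref{fact.inductive} say --- those concern restrictions above a fixed stem $a\in\hat B\setminus B$, not restrictions to an arbitrary $M\subseteq\om$. This would need its own argument (again involving thinning $M$ to meet the full-approximation constraints), though it is less serious than the section issue.
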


The proof is by a straightforward induction on the maximum of the ranks of $B$ and $C$ using the Fusion Lemma \ref{lem.fusion1}.
This base case comes from \cite{DobrinenJSL15}, as this fact is proved there for all pairs  of uniform barriers of finite rank.

This section concludes by proving the main theorem of this section, Theorem \ref{thm.E_BtRs}:
 Each  space $(\mathcal{E}_B,\le,r)$  is a topological Ramsey space; hence, every  subset of $\mathcal{E}_B$  with the property of Baire is Ramsey.
Since   $\mathcal{E}_B$  is a closed subspace of $(\mathcal{AE}^B)^{\mathbb{N}}$,
it suffices, by  the Abstract Ellentuck Theorem  \ref{thm.AET}, 
to show that  $(\mathcal{E}_B,\le,r)$
 satisfies the axioms \bf A.1 \rm -  \bf A.4\rm.
Leaving the routine checking of 
 axioms  \bf A.1 \rm  and  \bf A.2 \rm  to the reader,
we first show that \bf A.3 \rm holds for $\mathcal{E}_B$ 
for all  uniform barriers $B$.
Then  
we  show by induction on rank of $B$ that \bf A.4 \rm holds for $\mathcal{E}_B$, assuming that $\mathcal{E}_C$ for $C$ of smaller rank than $B$ have already been proved to be topological Ramsey spaces.

\begin{fact}\label{fact.true}
Given $u\in\mathcal{AE}^B$, if $u\sse X\in\mathcal{E}_B$, then $r_{|u|+1}[u,X]\ne\emptyset$.
\end{fact}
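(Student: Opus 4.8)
The plan is to argue directly from the definitions of the domain space $D_B$ and the full-approximation maps, using the key structural Fact \ref{fact.inductive}. First I would fix $u\in\mathcal{AE}^B$ with $u\sse X\in\mathcal{E}_B$, set $k=|u|$, and let $S=\psi_B^{-1}(X)\in D_B$ with $r^B_k(S)=\psi_B^{-1}(u)$. What I must produce is some $Y\in\mathcal{E}_B$ with $Y\le X$ and $r_{k+1}(Y)\sqsupset u$, i.e. a witness that $u$ admits at least one one-step extension inside $X$. Equivalently, on the level of sequence spaces, I must find some $s\in S$ (indeed in the appropriate tail of $S$) such that $\psi_B^{-1}(u)\cup\{s\}$ can be completed to a full member of $D_B$ below $S$.

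The cleanest route is to invoke Fact \ref{fact.inductive}: for $u\le_{\fin}X\in\mathcal{E}_B$ there is a unique $w_u\in\hat X\setminus X$ with
\begin{equation}
r^B_{|u|+1}[u,X]=\{u\cup\{w\}: w\in X,\ w\tr w_u\ \text{and}\ \min(w\setminus w_u)>\max(u)\},
\end{equation}
and moreover $X_u=\{w\in X: w\tr w_u,\ \min(w\setminus w_u)>\max(u)\}$ is a member of $\mathcal{E}_{B_{a_u}}$, hence in particular nonempty. So it suffices to observe that $X_u\ne\emptyset$: picking any $w\in X_u$, the set $u\cup\{w\}$ lies in $r^B_{|u|+1}[u,X]$, which is therefore nonempty. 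The nonemptiness of $X_u$ is immediate because $\mathcal{E}_{B_{a_u}}$ is a space of infinite objects (its members are images under $\psi_{B_{a_u}}$ of infinite domains in $D_{B_{a_u}}$, by the recursive Definition \ref{defn.domainspace}, where every domain is required to have infinite intersection with $\om^1$, hence is infinite).

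The one point needing a little care — and the main (mild) obstacle — is that Fact \ref{fact.inductive} as stated presupposes that $u$ really is of the form $r^B_{|u|}(X)$ rather than merely $u\sse X$ as a set; I would note that since $u\in\mathcal{AE}^B$ and $u\sse X$, axiom \textbf{A.1}(3) forces $u=r^B_{|u|}(X)$, so there is no gap. Alternatively, to keep the argument self-contained and avoid circularity in case Fact \ref{fact.inductive} is later shown to use \textbf{A.3}, I would give the direct recursive argument: induct on $\rank(B)$. If $r^B_{|u|+1}(S)$ is itself a full approximation $\fa^B_{j}(S)$ of $S$ (the case $w_u=\emptyset$), then by the last Remark before Notation \ref{notn.EB_a} the admissible last members range over the infinite set $\bigcup\{S_m:m\ge\max(r^B_{|u|}(S))\}$, so such an $s$ exists; if $r^B_{|u|+1}(S)$ is not a full approximation, then by the recursive construction of $D_B$ the one-extensions of $u$ are governed by one-extensions inside a copy of $D_{B_a}$ for some $a\in\hat B\setminus B$ with $\rank(B_a)<\rank(B)$, and the induction hypothesis applied there supplies the extension. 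Either way $r_{|u|+1}[u,X]\ne\emptyset$.
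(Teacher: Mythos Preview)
The paper states this fact without proof, so there is no argument to compare against; it is evidently regarded as immediate from the construction and from Fact~\ref{fact.inductive}. Your main line via Fact~\ref{fact.inductive} is exactly the right one: once $X_u\in\mathcal{E}_{B_{a_u}}$, it is an infinite set, so picking any $w\in X_u$ yields $u\cup\{w\}\in r^B_{|u|+1}[u,X]$.

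One correction, though it does not affect the outcome. Your ``mild obstacle'' is a phantom: Fact~\ref{fact.inductive} is stated under the hypothesis $u\le_{\fin}X$, not $u=r_{|u|}(X)$. In these spaces $\le$ is $\sse$ and $\le_{\fin}$ is the finite-subset relation, so $u\sse X$ already gives $u\le_{\fin}X$ and Fact~\ref{fact.inductive} applies directly. Your proposed repair via \textbf{A.1}(3) is actually false: from $u\in\mathcal{AE}^B$ and $u\sse X$ one cannot conclude $u=r_{|u|}(X)$ (already in the Ellentuck space, $u=\{2,5\}\in\mathcal{AE}$ and $u\sse\omega$, but $r_2(\omega)=\{0,1\}$). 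Axiom \textbf{A.1}(3) only compares two restriction values, not an arbitrary approximation against a restriction. Fortunately you do not need that claim at all.

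Your alternative self-contained induction on $\rank(B)$ is fine too; it is essentially the content of the proof sketch given for Fact~\ref{fact.inductive} itself, so either route is acceptable.
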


\begin{lem}\label{lem.A.3}
For each $B$, the space $(\mathcal{E}_B,\le,r)$ satisfies Axiom \bf A.3\rm.
\end{lem}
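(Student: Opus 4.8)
The plan is to verify the two parts of Axiom \textbf{A.3} directly from the recursive construction of $\mathcal{E}_B$, using the structural facts already established, especially Facts \ref{fact.important} and \ref{fact.inductive} and the two Fusion Lemmas. Recall that \textbf{A.3}(1) asserts that if $\depth_X(u)<\infty$ then $[u,Y]\ne\emptyset$ for all $Y\in[\depth_X(u),X]$, and \textbf{A.3}(2) asserts that if $Y\le X$ and $[u,Y]\ne\emptyset$, then there is $Y'\in[\depth_X(u),X]$ with $\emptyset\ne[u,Y']\sse[u,Y]$.

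For \textbf{A.3}(1), suppose $\depth_X(u)=n<\infty$, so $u\le_{\fin} r_n(X)$, and let $Y\in[r_n(X),X]$; thus $r_n(Y)=r_n(X)$ and $Y\le X$. I want to produce a member $W\in[u,Y]$, i.e. some $W\le Y$ with $u\sqsubset W$ (equivalently $u=r_{|u|}(W)$). The point is that since $u\le_{\fin} r_n(Y)$ and $Y\le X$, $u$ really is a ``legitimate'' finite approximation sitting inside $Y$. By Fact \ref{fact.true}, $r_{|u|+1}[u,Y]\ne\emptyset$, so $u$ has at least one one-step extension inside $Y$; iterating Fact \ref{fact.true} gives a full infinite extension $W\le Y$ with $r_{|u|}(W)=u$, so $W\in[u,Y]$ and we are done. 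The key subtlety is to check that Fact \ref{fact.true} really does apply to $u$ relative to $Y$, which it does once we know $u$ equals $r_{|u|}(X')$ for some $X'\le Y$; this is guaranteed by $u\le_{\fin} r_n(Y)$ together with the description of $\mathcal{AE}^B$ coming from the domain space $D_B$. In fact it is cleanest to read off $u\sqsubseteq r_n(X)$ (not merely $u\le_{\fin} r_n(X)$) from the axiom-free part of the setup, but since $\le_{\fin}$ is a partial order on all these spaces, $\depth_X(u)=n$ means $u$ is literally an initial segment of $r_n(X)=r_n(Y)$, and then $W$ can be built by extending $u$ through $Y$ using Fact \ref{fact.true}.

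For \textbf{A.3}(2), let $Y\le X$ with $[u,Y]\ne\emptyset$, and let $n=\depth_X(u)$, which is finite since some member of $[u,Y]$ witnesses $u\le_{\fin} Y\le X$ hence $u\le_{\fin} r_m(X)$ for some $m$. The task is to ``push $Y$ up'' to a $Y'$ which still reaches down to $u$ but agrees with $X$ below depth $n$. The natural construction: form $Y'$ by taking $r_n(Y')=r_n(X)$ and then, above $u$, copying the part of $Y$ that lies above $w_u$ — precisely, using Fact \ref{fact.inductive}, $X_u\in\mathcal{E}_{B_{a_u}}$ and the relevant part of $Y$ above $u$ is some $Y_u\sse X_u$; Fusion Lemma \ref{lem.fusion2} then yields $Z\in[u,X]$ with $Z_u\sse Y_u$. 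Taking $Y'=Z$ gives $Y'\in[u,X]$, and since $r_n(Z)=r_n(X)$ (as $Z\in[u,X]$ and $u\le_{\fin}r_n(X)$ forces the first $n$ coordinates) we get $Y'\in[r_n(X),X]=[\depth_X(u),X]$. Finally $[u,Y']\sse[u,Y]$ because every $W\in[u,Y']$ has $u\sqsubset W$ and the part of $W$ above $w_u$ lies in $Z_u\sse Y_u\sse Y$, while the part of $W$ at or below $w_u$ is forced to coincide with the corresponding part of $u\sse Y$; so $W\le Y$ and $W\in[u,Y]$.

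The main obstacle I anticipate is the bookkeeping in \textbf{A.3}(2): making sure that $[u,Y']\sse[u,Y]$ genuinely holds, i.e. that one cannot escape $Y$ by choosing, within $Y'$, some node below $w_u$ that is not in $Y$. This requires the observation (implicit in Fact \ref{fact.inductive}) that once $u\sqsubset W$ is fixed, the portion of $W$ that is $\tle$-related to $w_u$ is entirely determined by $u$ itself — all the ``freedom'' in $[u,W]$ lives in $W_u$ — so controlling $Z_u\sse Y_u$ is exactly what is needed. A secondary annoyance is confirming $\depth_X(u)$ is finite and that $r_n(Z)=r_n(X)$; both follow from $\le_{\fin}$ being a partial order on $\mathcal{AE}^B$, so that $u\le_{\fin} r_n(X)$ pins down the first $n$ members of any member of $[u,X]$. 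I would handle the finite-rank base of any induction by citing the corresponding verification in \cite{DobrinenJSL15}, though in fact the argument above is uniform in $\rank(B)$ and needs no induction — it is the later axiom \textbf{A.4} that will require induction on rank.
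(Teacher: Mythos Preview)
Your treatment of \textbf{A.3}(1) matches the paper's: iterate Fact~\ref{fact.true} to extend $u$ inside $Y$. One correction worth flagging: $\depth_X(u)=n$ gives $u\le_{\fin} r_n(X)$, which in these spaces means $u\subseteq r_n(X)$, \emph{not} $u\sqsubseteq r_n(X)$. That $\le_{\fin}$ is a partial order does not force $u$ to be an initial segment of $X$. This does not break part (1), since Fact~\ref{fact.true} only needs $u\subseteq Y$, but the same confusion resurfaces fatally in part (2).

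For \textbf{A.3}(2) your approach diverges from the paper's and has two genuine gaps.

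First, Fusion Lemma~\ref{lem.fusion2} returns $Z\in[u,X]$, meaning $r_{|u|}(Z)=u$; this is \emph{not} $Z\in[\depth_X(u),X]$. In general $|u|<n=\depth_X(u)$ (whenever $u$ is not an initial segment of $X$), and nothing forces $r_n(Z)=r_n(X)$. Your justification that ``$u\le_{\fin} r_n(X)$ pins down the first $n$ members of any member of $[u,X]$'' repeats the $\subseteq$ versus $\sqsubseteq$ confusion: $u\subseteq r_n(X)$ pins down only the first $|u|$ members of $Z$, namely $u$ itself.

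Second, and more seriously, $Z_u\subseteq Y_u$ does \emph{not} yield $[u,Z]\subseteq[u,Y]$. By Fact~\ref{fact.inductive}, $Z_u=\{v(|u|):v\in r_{|u|+1}[u,Z]\}$ --- it controls only the \emph{first} new coordinate past $u$. For $W\in[u,Z]$ and $k>|u|$, the element $W(k)$ extends the stem $w_{r_k(W)}$, and once $k>|u|+1$ this stem typically lies in a branch of $\hat Z$ created by some earlier $W(j)$ with $|u|\le j<k$, not in $\hat u$. Concretely, in the Schreier space with $u=\{\{0\},\{1,2\}\}$ one has $w_u=\{1\}$; but $W(3)$ opens an entirely new branch (its stem is $\emptyset$), and then $W(5)$ extends $W(3)\!\restriction\!2$, a node outside $\hat u$ that the Fusion Lemma never constrained. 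Such $W(5)$ can land in $X\setminus Y$. Your assertion that ``all the freedom in $[u,W]$ lives in $W_u$'' is simply false: freedom lives in \emph{every} branch of $\hat W$ that grows out of $\hat u$, and there are many.

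The paper sidesteps both problems by building $Z$ directly from $r_d(X)$ (so $Z\in[d,X]$ is automatic) and, at each stage $n\ge d$, choosing $Z(n)\in Y$ whenever the current stem $w_{z_n}$ lies in $\hat u$ --- thus attempting to control every branch a member of $[u,Z]$ can enter, not merely the single branch above $w_u$.
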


\begin{proof}
Given $u\in\mathcal{AE}^B$ and $X\in\mathcal{E}_B$ such that $\depth_X(u)=d<\infty$, and letting $Y\in[d,X]$, construct  $Z\in[u,Y]$ as follows.
Let $z_m=u$, where $m=|u|$.
Note that $z_m\sse X$.
Given $n\ge m$ and $z_n\sqsupseteq u$ such that $z_n\sse X$,
by Fact \ref{fact.true}, $r_{n+1}[z_n,X]\ne\emptyset$, so  we may take $z_{n+1}\in r_{n+1}[z_n,X]$.
Then $z_{n+1}\sse Z$, so the induction process continues.
Letting $Z=\bigcup_{n\ge m} z_n$, we see that $Z\in [u,X]$;
hence  \bf A.3 \rm (1) holds.

For \bf A.3 \rm(2), 
suppose $Y\le X$ and $[u,Y]\ne\emptyset$.
Let $d=\depth_X(u)$ and  let  $z_d=r_d(X)$.
For  $n\ge d$ with $z_n$ already chosen,
if $w_{z_n}\in\hat{u}$, then choose $z_{n+1}\in r_{n+1}[z_n,X]$
such that 
$z(n)\in Y$.
Otherwise, simply choose any $z_{n+1}\in r_{n+1}[z_n,X]$.
Let $Z=\bigcup_{n\ge d} z_n$.
Then $Z\in[d,X]$.
Now if $Z'\in[u,Z]$, then for each $n<\om$, there is a $w\in\hat{u}$ such that $Z'(n)\vartriangleright w$, which implies that $Z'(n)\in Y$.  Hence, $Z'\in[u,Y]$, so
 $[u,Z]\sse[u,Y]$.
\end{proof}

%\begin{nd}
%Note to self:  The previous proof I would like to make very airtight, but %the paper is already getting long, so probably just leave as is.
%\end{nd}

\begin{thm}\label{thm.E_BtRs}
For each uniform barrier $B$ on $\om$,
$(\mathcal{E}_B,\le, r)$ is a topological Ramsey space.
\end{thm}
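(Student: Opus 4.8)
The plan is to invoke the Abstract Ellentuck Theorem (Theorem \ref{thm.AET}): since $\mathcal{E}_B$ is a closed subspace of $(\mathcal{AE}^B)^{\mathbb{N}}$, it suffices to verify axioms \textbf{A.1}--\textbf{A.4}. Axioms \textbf{A.1} and \textbf{A.2} follow routinely from the definitions of $r^B$ and $\le_{\fin}$ on $\mathcal{AE}^B$ (where $v\le_{\fin} u$ means $v\sse u$ as sets, with the lengths recording the position in the $\prec$-enumeration), exactly as for the Ellentuck space; \textbf{A.3} has just been established in Lemma \ref{lem.A.3} uniformly in $B$. So the entire content of the proof is \textbf{A.4}, the pigeonhole principle, and this is where I would concentrate.

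For \textbf{A.4} I would induct on $\rank(B)$. The base case $\rank(B)<\om$ is Theorem 13 (or the corresponding result) of \cite{DobrinenJSL15}, since $\mathcal{E}_{[\om]^k}=\mathcal{E}_k$. For the inductive step, fix a uniform barrier $B$ of rank $\al$ and assume every $\mathcal{E}_C$ with $\rank(C)<\al$ is a topological Ramsey space (in particular satisfies \textbf{A.4}). Given $u\in\mathcal{AE}^B$ with $\depth_X(u)=d<\infty$ and a set $\mathcal{O}\sse\mathcal{AE}^B_{|u|+1}$, the task is to find $Y\in[d,X]$ with $r_{|u|+1}[u,Y]$ monochromatic for the partition $\{\mathcal{O},\mathcal{O}^c\}$. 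By Fact \ref{fact.inductive}, the set $X_u=\{v(|u|):v\in r^B_{|u|+1}[u,X]\}$ of possible one-element extensions of $u$ inside $X$ is (isomorphic to) a member of $\mathcal{E}_{B_{a_u}}$, where $a_u=\rho_B^{-1}(w_u)$ and $\rank(B_{a_u})<\al$ (this inequality is the reason the recursion terminates: $w_u$ lies strictly above $\emptyset$ unless $r_{|u|+1}^B(X)$ is already a full approximation, and $B_a$ for $a\in\hat B\setminus B$ always has rank $<\al$). Transporting the coloring $w\mapsto \bigl[\,u\cup\{w\}\in\mathcal{O}\,\bigr]$ along the isomorphism of Fact \ref{fact.important}, this becomes a $2$-coloring of the first-approximation set $\mathcal{AE}^{B_{a_u}}_1$ of $\mathcal{E}_{B^{a_u}}$. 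Applying \textbf{A.4} for the smaller-rank space $\mathcal{E}_{B^{a_u}}$ (equivalently, applying its Abstract Nash-Williams Theorem \ref{thm.ANW} to the Nash-Williams family of first approximations) yields $U\in\mathcal{E}_{B_{a_u}}$ with $U\sse X_u$ on which the transported coloring is constant.

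It remains to pull this stabilization back up to a member $Y\in[d,X]$. Here I would use Fusion Lemma \ref{lem.fusion2}: since $U\in\mathcal{E}_{B_{a_u}}$ with $U\sse X_u$, there is $Z\in[u,X]$ with $Z_u\sse U$; and in fact a routine extension of the fusion construction (building $Z$ as $\bigcup_m z_m$ with $z_{|u|}=r_d(X)$, choosing $z_{m+1}\in r_{m+1}[z_m,X]$ so that whenever $w_{z_m}\tre w_u$ the new node $z_{m+1}(m)$ lies in $U$, and otherwise arbitrarily) produces $Z\in[d,X]$ whose one-step extensions of $u$ all lie in $U$. Then $r_{|u|+1}[u,Z]\sse\{u\cup\{w\}:w\in U\}$, which by the choice of $U$ is entirely inside $\mathcal{O}$ or entirely inside $\mathcal{O}^c$; take $Y=Z$. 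This completes the induction and hence, via Theorem \ref{thm.AET}, the proof that $(\mathcal{E}_B,\le,r)$ is a topological Ramsey space.

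The main obstacle is the inductive step for \textbf{A.4}: specifically, checking carefully that the set of one-element extensions of $u$ is genuinely isomorphic to a \emph{first-approximation} set of a strictly-smaller-rank space $\mathcal{E}_{B^{a_u}}$ (so the induction hypothesis applies) and that the isomorphism of Fact \ref{fact.important} respects the restriction maps in the way needed to transport the coloring and then transport the monochromatic set back --- together with the bookkeeping in the fusion step to ensure the resulting $Y$ lies in $[d,X]$ and not merely in $[u,X]$. The $\le_{\fin}$-finiteness needed for \textbf{A.2}(1) and the depth computations are easy once one notes that $\prec$ well-orders each $X\in\mathcal{E}_B$ in order type $\om$, so I would not dwell on those.
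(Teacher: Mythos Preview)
Your outline mirrors the paper's inductive strategy, but there is one genuine gap in the inductive step that the paper handles explicitly and your proposal does not.

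You assert that $\rank(B_{a_u})<\al$, justifying this with ``$B_a$ for $a\in\hat B\setminus B$ always has rank $<\al$.'' This is false precisely in the case you flag parenthetically: when $r^B_{|u|+1}(X)$ is a full approximation, one has $w_u=\emptyset$, hence $a_u=\emptyset\in\hat B\setminus B$, and $B_{a_u}=B_\emptyset=B$ has rank exactly $\al$. The induction hypothesis does not apply to $\mathcal{E}_B$ itself, so you cannot invoke Nash--Williams for $\mathcal{E}_{B_{a_u}}$ here. Your proof acknowledges the exceptional case in a parenthesis and then proceeds as if it does not arise.

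The paper treats this case by descending one more level. When $w_u=\emptyset$, the set $X_u$ consists of all $w\in X$ with $\min(w)>\max(u)$. For each relevant $n$, one has $X_n\in\mathcal{E}_{B_{a_n}}$ where $a_n=\rho_B^{-1}(\{n\})$, and now $\rank(B_{a_n})<\al$ genuinely holds. Apply the induction hypothesis to the coloring $c_n(w)=0\iff u\cup\{w\}\in\mathcal{O}$ on each $X_n$ to obtain homogeneous $Y_n\le X_n$; thin to an infinite set $M$ of $n$'s on which the constant colors agree; then fuse via Lemma \ref{lem.fusion2} to obtain $Z\in[u,X]$ with $Z_n\sse Y_n$ for the surviving $n$'s. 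This extra layer---passing to the immediate successors of $\emptyset$ in $\hat B$ before invoking the hypothesis---is what makes the induction close.

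A minor point: the paper only verifies \textbf{A.4} for $u=r_k(X)$, so that $\depth_X(u)=|u|$ and $[u,X]=[\depth_X(u),X]$ coincide; your concern about landing in $[d,X]$ rather than $[u,X]$ therefore dissolves once you restrict to this case.
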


\begin{proof}
The proof is by induction on $\rank(B)$.
If $\rank(B)=1$, then $\mathcal{E}_B$ is the Ellentuck space; 
for  $2\le k<\om$,  $\mathcal{E}_{[\om]^k}$ is the space $\mathcal{E}_k$ which is a topological Ramsey space by Theorem  21 in \cite{DobrinenJSL15}.
Now suppose $\rank(B)=\al\ge \om$, and for all uniform barriers $C$ with $\rank(C)<\al$, $\mathcal{E}_C$ is a topological Ramsey space.
It suffices to prove the Pigeonhole Principle \bf A.4 \rm for $\mathcal{E}_B$.

Let $X\in\mathcal{E}_B$,  $u=r_k(X)$, and $\mathcal{O}\sse\mathcal{AE}^B_{k+1}$, and 
let $X_u$ denote $\{v(k):v\in r_{k+1}[u,X]\}$.
Recalling that $a_u$ denotes $\rho^{-1}_B(w_u)$,
we point out that $X_u\in\mathcal{E}_{B_{a_u}}$
and $\{\{w\}:w\in X_u\}\sse\mathcal{AE}_1^{B_{a_u}}$.
If $w_u\ne\emptyset$, then by the induction hypothesis, $\mathcal{E}_{B_{a_u}}$ is a topological Ramsey space.
Define a coloring $c:X_u\ra 2$ by 
$c(w)=0$ if and only if $u\cup\{w\}\in \mathcal{O}$.
By the Abstract Nash-Williams Theorem for $\mathcal{AE}_1^{B_{a_u}}$, there is a 
 $Y_u\in \mathcal{E}_{B_{a_u}}|X_u$ such that 
each $w\in Y_u$ has the same color.
By the Fusion Lemma \ref{lem.fusion2},
there is a $Z\in [u,X]$ such that $Z_u\sse Y_u$.
Thus, $Z$ satisfies \bf A.4\rm.

Suppose now that  $w_u=\emptyset$.
Then $X_u$ equals the set of all $w\in X$ for which $\min(w)>\max(u)$.
Let $N$ denote the set of 
$n >\max(u)$  for which  $X_n\ne\emptyset$.
For each $n\in N$, 
let $X_n$ denote $\{w\in X:w\tr\{n\}\}$ and
let $a_n$ denote $\rho_B^{-1}(\{n\})$.
Since $\rank(B_{a_n})<\al$, the induction hypothesis yields that
 $\mathcal{E}_{B_{a_n}}$ is a topological Ramsey space.
Let $c_n:X_n\ra 2$ be the coloring such that $c_n(w)=0$ if and only if $u\cup w\in \mathcal{O}$, for $w\in X_n$.
Since the set of singletons  $\{\{w\}:w\in X_n\}$ is actually a subset of $\mathcal{AE}^{B_{a_n}}_1$,
 the  Abstract  Nash-Williams Theorem for $\mathcal{E}_{B_{a_n}}$  implies that there is a  $Y_n\in \mathcal{E}_{B_{a_n}}$  such that $Y_n\sse X_n$ and $Y_n$ is homogeneous for the coloring $c_n$.
Take an infinite set $M\sse N$ such that  $c_n\re Y_n$ have the same color for all 
  $n\in M$.
By the Fusion Lemma \ref{lem.fusion2},
there is a $Z\in [u,X]$ such that for each $n>\max(u)$ for which $Z_n\ne\emptyset$, $n$ must be  in $M$ and $Z_n\sse Y_n$.
Thus, \bf A.4 \rm is satisified by $Z$.
\end{proof}

%********************************
%**********************************
%**********************************
%************************************

\section{Uniform projections and canonical equivalence relations on 1-extensions}\label{sec.canoneqrelAE1}

%Note that $\rho^{-1}_B(X_{w_u})=B_{a_u}\cap\rho^{-1}_B(X)$ and
%$\rho_B^{-1} %(\hat{X}_{w_u})=\hat{B}_{a_u}\cap\rho^{-1}_B(\hat{X})$.
%Note that 
%$\rank(\rho^{-1}_B(X_{w_u}))=\rank( B_{a_u}\cap\rho_B^{-1}%(X))=\rank(B_{a_u})$.

In \cite{DobrinenJSL15},
we proved  that, for $2\le k<\om$, each equivalence relation on $\mathcal{AE}_1^{[\om]^k}|X$ is essentially one of $k+1$  canonical types:
 projections to $[\om]^n$ for $n\le k$.
That is, there is some $n\le k$ and some $Y\le X$ such that for all pairs $w=\{i_0,\dots,i_{k-1}\}$ and $w'=\{j_0,\dots,j_{k-1}\}$ of elements in $Y$, $w$ and $w'$ are equivalent if and only if their projections to length $n$,
$\pi_n(w)=\{i_l:l<n\}$ and $\pi_n(w')=\{j_l:l<n\}$, are equal.

For uniform barriers  $B$ of infinite rank, there are no longer only finitely many canonical equivalence relations on $\mathcal{AE}_1^B$;
in fact, there are continuum many.
However, just as the projections  $\pi_n$, $n\le k$, 
on $\mathcal{AE}_1^{[\om]^k}$
gave images which were of the form of the uniform barrier $[\om]^n$,
so too 
the structure of a canonical equivalence relation  on $\mathcal{AE}_1^B$ for an infinite rank uniform barrier $B$
closely resembles the structure of a projection of $B$ to some uniform barrier consisting of initial segments of members of $B$.
We  make this precise below.

The following notation will be used throughout.

\begin{notation}\label{notn.yes}
We shall often use the meet symbol $w \wedge w'$ to denote $w\cap w'$ for $w,w'\in\widehat{\bW}_B$, since the tree structure of $\widehat{\bW}_B$ under initial segments is quite important.
For  $X\in\mathcal{E}_B$,
$\hat{X}$ denotes $\{w'\in [\om]^{<\om}:\exists w\in X\, (w\tre w')\}$.
For $w_*\in\hat{X}\setminus X$, let $X_{w_*}=\{w\in \hat{X}:w\tr w_*\}$.
Note that $X_{w_*}\in\mathcal{E}_{B_{a_*}}$, where $a_*=\rho^{-1}_B(w_*)$.
For $u\in\mathcal{AE}^B$,
let $w_u$ denote the $\vartriangleleft$-maximal member of $\widehat{\mathbb{W}}_B$ such that for all $X\in [u,\mathbb{W}_B]$,
$X(|u|)\vartriangleright w_u$.
Note then that for any $X\ge_{\fin}u$, $w_u$ is equal to the meet  $\bigwedge\{v(|u|):v\in r_{|u|+1}[u,X]\}$.
Let $a_u$ denote $\rho_B^{-1}(w_u)$.
For $u\le_{\fin}X$,
recall that
$X_u=\{w\in X_{w_u}:\min(w\setminus w_u)>\max(u)\}$ is a member of $\mathcal{E}_{B_{a_u}}$, and
 is exactly the set of those $w\in X$ such that $u\cup\{w\}\in r_{|u|+1}[u,X]$, and that $X_u\in\mathcal{E}_{B_{a_u}}$.
\end{notation}

Given a  uniform barrier $B$,  for each $X\in\mathcal{E}_B$ and $w_*\in\hat{X}\setminus X$,
we will define the set of all {\em uniform projections on} 
$X_{w_*}$
by induction on  $\rank(B_{a})$, where $a=\rho^{-1}_B(w_*)$.

\begin{defn}[Uniform Projections $\UP(B,X,w)$]\label{defn.UPBXs}
Let $X\in\mathcal{E}_B$ and $w\in\hat{X}\setminus X$, and let $a$ denote $\rho_B^{-1}(w)$.
If
$\rank(B_{a})=1$, 
define 
$\UP(B,X,w)=\{\{w\}, X_{w}\}$.
Assume now  that for all triples $(B,X,w)$ 
with $\rank(B_{a})<\al$,
 the collection  $\UP(B,X,w)$ has been defined.
Let $(B,X,w)$ be a triple with $\rank(B_{a})=\al$.
A subset $P\sse 
\hat{X}_{w}$ is in $\UP(B,X,w)$ if and only if the following (1) - (4) hold:
\begin{enumerate}
\item 
$\forall p\in P$, 
$p\tre w$;
\item
$P$ is a {\em front on $X_{w}$}; by this we mean that for each $w'\in X_{w}$, there is a $p\in P$ such that $p\tle w'$, and for all $p,p'\in P$, $p\tle p'$ implies $p=p'$.
\end{enumerate}
Enumerating those $n>\max(a)$ such that $a\cup\{n\}\in
\rho_B^{-1}(\hat{X}_{w})$ as $n_0<n_1<\dots$,
and letting $p_i=\rho_B(a\cup\{n_i\})$,
\begin{enumerate}
\item[(3)]
For all $i<\om$,
$P_i:=\{p\in P:  p\tre p_i\}$ is  in $\UP(B_{a\cup\{n_i\}},X,p_i)$; and
\item[(4)]
Either
\begin{enumerate}
\item[(a)]
 there is a $\beta<\al$ such that for all $i<\om$, $\rank(P_i)=\beta$; or
\item[(b)]
there is a strictly increasing sequence $\beta_i$ with $\sup_{i<\om}\beta_i=\al$ such that for each $i<\om$,
$\rank(P_i)=\beta_i$.
\end{enumerate}
\end{enumerate}
\end{defn}

\begin{rem}
 $\rho^{-1}_B(P)$ is usually not a barrier or even a front on some infinite subset of $\om$, but it does have the structure of a uniform barrier.
The above construction recursively defines  the notion of uniformness.
Further, the construction recursively defines the notion of rank of a uniform projection
 in the same way that the recursive definition of uniform barrier defines the notion of rank of a uniform barrier.
\end{rem}

Given  $u\le_{\fin}X\in\mathcal{E}_B$  
let $\UP(B,X,u)$ denote $\{P\in\UP(B,X,w_u):P\sse \hat{X}_u\}$.
This is 
the set of  {\em uniform projections on}  $X_u$.
Given $P\in\UP(B,X,u)$ and $Y\le X$ with $u\le_{\fin} Y$,
we shall write $P|Y$
to denote
$P\cap \hat{Y}_u$.
Note that $P|Y\in\UP(B,Y,u)$.

Given $P,Q\in\UP(B,X,u)$, write $P\tl Q$ if and only if for all pairs $p\in P$ and $q\in Q$,
either $p\tl q$ or else $p$ and $q$ are $\tle$-incomparable.
The next lemma shows that and two members of $\UP(B,X,u)$ are $\tl$-comparable  or equal on some smaller $Y\le X$.

\begin{lem}\label{lem.CsqsubD}
Let $u\le_{\fin}X\in\mathcal{E}_B$, and let $P,Q\in\UP(B,X,u)$.
Then there is a $Y\in[u,X]$ such that $P|Y\tl Q|Y$, $P|Y=Q|Y$, or $P|Y\tr Q|Y$.
\end{lem}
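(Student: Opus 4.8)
The plan is to prove Lemma~\ref{lem.CsqsubD} by induction on $\rank(B_{a_u})$, mirroring both the recursive definition of $\UP(B,X,u)$ and the structure of Fact~\ref{fact.2barrierscompare}. In the base case $\rank(B_{a_u})=1$, both $P$ and $Q$ belong to $\{\{w_u\},X_u\}$ and also to $\{\{w_u\},Y_u\}$ after any restriction, so there is nothing to do: either they are already equal, or one is $\{w_u\}$ and the other is $X_u$, in which case $\{w_u\}\tl X_u$ holds outright with $Y=X$.

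For the inductive step, let $\rank(B_{a_u})=\al$. First I would handle the comparison ``level by level'' in the node $w_u$: enumerate the $n>\max(a_u)$ with $a_u\cup\{n\}\in\rho_B^{-1}(\hat{X}_u)$ as $n_0<n_1<\cdots$, set $p_i=\rho_B(a_u\cup\{n_i\})$, and recall from clause~(3) of Definition~\ref{defn.UPBXs} that $P_i=\{p\in P:p\tre p_i\}$ and $Q_i=\{q\in Q:q\tre q_i\}$ lie in $\UP(B_{a_u\cup\{n_i\}},X,p_i)$. For each $i$ the induction hypothesis applied inside $\mathcal{E}_{B_{a_u\cup\{n_i\}}}$ gives some $Y^{(i)}\in\mathcal{E}_{B_{a_u\cup\{n_i\}}}$ with $P_i|Y^{(i)}$ and $Q_i|Y^{(i)}$ being $\tl$-comparable or equal, and by the Abstract Nash-Williams Theorem (coloring each such component by which of the three alternatives occurs) we may shrink further so that the same alternative $\epsilon\in\{\tl,=,\tr\}$ occurs for all $i$ in an infinite index set; fix $\epsilon$ so that it is the ``generic'' choice. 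Now apply the Fusion Lemma~\ref{lem.fusion2} (in the form for $\mathcal{E}_{B_{a_n}}$, $a_n=a_u\cup\{n\}$) to splice the $Y^{(i)}$ together into a single $Y\in[u,X]$ with $Y_n\sse Y^{(i)}$ whenever $n=n_i$ lies in the surviving index set, so that on $Y$ we have $P_i|Y\ \epsilon\ Q_i|Y$ for every surviving $i$. If $\epsilon$ is ``$=$'', then $P|Y$ and $Q|Y$ agree component by component, hence $P|Y=Q|Y$; if $\epsilon$ is ``$\tl$'' (resp.\ ``$\tr$''), then $P|Y\tl Q|Y$ (resp.\ $P|Y\tr Q|Y$), since $\tl$ was defined componentwise up to $\tle$-incomparability and incomparable components cause no conflict.

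There is one subtlety to address before invoking the induction hypothesis componentwise: if $P$ and $Q$ ``branch at different nodes'' below $w_u$ — say $P$ already splits at $w_u$ into the $p_i$-components while $Q$ does not yet split (or splits only at a deeper node $p_i$) — then for most $i$ one of $P_i,Q_i$ may fail to be a genuine front inside $X_{p_i}$ in the shape required. The clean way to handle this is to first do a preliminary coloring: color each $p_i$ by whether $Q$ has a single node $\tle p_i$ (so $Q$ ``has not branched yet'' at level $i$) or $Q_i$ is already a nontrivial front; by Nash-Williams we may assume one behavior for all surviving $i$. In the first case $Q|Y$ is a single initial segment of $w_u$-type and $P|Y$ refines it, so $Q|Y\tle P|Y$ nodewise and one checks directly that $Q|Y\tl P|Y$ (using that the sole node of $Q$ is a proper initial segment of every $p\in P$, which holds after passing to a tail); the symmetric case is dual; and only when both are genuinely branching do we fall into the componentwise argument above. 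The main obstacle I expect is precisely this bookkeeping — matching up the recursive front structure of $P$ and $Q$ node by node and ensuring the three-way alternative is preserved uniformly under fusion — rather than any deep new idea; once the correct coloring/fusion scheme is set up, each individual step is routine given Fact~\ref{fact.2barrierscompare}, the Abstract Nash-Williams Theorem, and the Fusion Lemmas.
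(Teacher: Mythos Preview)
Your approach is essentially the paper's: induction on $\rank(B_{a_u})$, apply the induction hypothesis at each immediate successor $p_i=\rho_B(a_u\cup\{n_i\})$ of $w_u$, thin the index set to a single alternative $\epsilon\in\{\tl,=,\tr\}$, then fuse via Lemma~\ref{lem.fusion2}. The only cosmetic differences are that the paper uses ordinary infinite pigeonhole on the index set $N$ (choosing an infinite $M\sse N$ on which the same relation holds) rather than the Abstract Nash-Williams Theorem, and it omits your third paragraph entirely.

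That third paragraph addresses a non-issue. By clause~(3) of Definition~\ref{defn.UPBXs}, for any $P\in\UP(B,X,u)$ with $\rank(B_{a_u})>1$ the components $P_i=\{p\in P:p\tre p_i\}$ are \emph{automatically} members of $\UP(B_{a_u\cup\{n_i\}},X,p_i)$, and likewise for $Q_i$; so the induction hypothesis applies directly at every $i$ with no preliminary coloring required. There is no configuration in which ``$Q$ has not yet branched'' at level $i$ while $P$ has. The only degenerate possibility is $P=\{w_u\}$ (or $Q=\{w_u\}$), in which case $\{w_u\}\tl Q$ holds outright on $Y=X$ for any $Q\ne\{w_u\}$; the paper does not pause over this either. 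You may simply drop that paragraph.
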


\begin{proof}
The proof is by induction on rank of $B_{a_u}$, over all pairs $u\le_{\fin}X\in\mathcal{E}_B$.
If $\rank(B_{a_u})=1$, then 
$\UP(B,X,u)$ is equal to the set $\{\{w_u\}, X_u\}$, 
these uniform projections having rank $0$ and $1$, respectively.
The lemma immediately follows for this case.

Now suppose that  for all $u\le_{\fin}X$ with $\rank(B_{a_u})<\al$,
there is a $Y\in[u,X]$ such that the pair $P|Y$, $Q|Y$ satisfy the lemma.
Fix  $u\le_{\fin} X\in\mathcal{E}_B$ with  $\rank(B_{a_u})=\al$.
Let $P,Q\in\UP(B,X,u)$, and
let $N$ denote the set of all $n>\max(u)$ for which $w_u\cup\{n\}\in \hat{X}$.
For each $n\in N$, let
 $\gamma_n=\rank(P|X_{w_u\cup\{n\}})$, $\delta_n=\rank(Q|X_{w_u\cup\{n\}})$, and  $a_n=\rho^{-1}_B(w_u\cup\{n\})$.
Then
$\gamma_n$ and $\delta_n$ are less than $\al$, so 
apply the induction hypothesis to obtain $Y_n\in\mathcal{E}_{B_{a_n}}$ such that $Y_n\le X_{w_u\cup\{n\}}$ and
one of  $P|Y_n\tl Q|Y_n$, $P|Y_n= Q|Y_n$, $P|Y_n\tr Q|Y_n$ holds.
Let $M$ be an infinite subset of $N$ such that the same one of the  three possibilities holds for each $n\in M$.
Apply the Fusion Lemma \ref{lem.fusion2} to obtain a $Z\in[u,X]$ such that 
for each $n\in\om$, if $Z_{w_u\cup\{n\}}\ne\emptyset$,
then $n\in M$ and $Z_{w_u\cup\{n\}}\le Y_n$ in $\mathcal{E}_{B_{a_n}}$.
Then $Z$ satisfies the lemma.
\end{proof}

\begin{defn}[Canonical Projection Maps]\label{defn.uniformproj}
For $X\in\mathcal{E}_B$,  $w_*\in\hat{X}\setminus X$,
and $P\in \UP(B,X,w_*)$,
define  the {\em projection map} 
$\pi_P:X_{w_*}\ra P$ as follows.
If $P=\{w_*\}$, then define $\pi_P(w)=\emptyset$ for all $w\in X_{w_*}$.
Otherwise,
for $w\in X_{w_*}$, let $\pi_P(w)$ be {\em the} $p\in P$ such that $p\tle w$.
The set $\{\pi_P:P\in\UP(B,X,w_*)\}$
is
the collection of {\em canonical projections} on $X_{w_*}$.
\end{defn}

\begin{defn}[Canonical Equivalence Relations on  $X_{w_*}$]\label{def.canon1ext}
Given $P\in\UP(B,X,w_*)$, define the equivalence relation
$E_P$ on  $X_{w_*}$ 
by $w\ E_P\ w'$ if and only if $\pi_P(w)=\pi_P(w')$.
\end{defn}

Since for $X\in\mathcal{E}_B$, $\mathcal{AE}^B_1|X=\{\{w\}:w\in X\}$, an equivalence relation on  $\mathcal{AE}^B_1|X$ is essentially the same as an equivalence relation on $X$, and vice versa.
Thus, we may  consider the members of  $\UP(B,X):=\UP(B,X,\emptyset)$ as  canonical projections on $X$.

For $\{n\}\in\rho_B^{-1}(X)$,
let $w_n=\rho_B(\{n\})$ and 
let $\UP(B_n,X_n)$ denote the appropriate shift of $\UP(C,Y)$, where $C=\delta_n(B_n\setminus\{n\})$ and $Y=
  \rho_C(\delta_n( \rho_B^{-1}(X_n)\setminus\{n\}))$.
Precisely,
$\UP(B_n,X_n)$ is the set of all
  $\rho_B( \{n\}\cup (\rho_C^{-1}(Q)+(n+1))$, for  $Q\in\UP(C,Y)$.

\begin{thm}[Canonization for $\mathcal{AE}^B_1$]\label{thm.CanonicalonAE_1}
Let $u\le_{\fin} X\in\mathcal{E}_B$, and let $E$ be an equivalence relation on the members of $X$.
Then there is a $Z\le X$ and a $P\in\UP(B,X)$ such that $E|Z=E_P|Z$.
\end{thm}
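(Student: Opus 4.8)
The plan is to prove Theorem~\ref{thm.CanonicalonAE_1} by induction on $\rank(B_{a_u})$, paralleling the recursive structure of the definition of $\UP(B,X,u)$ and of uniform barriers, and using the Abstract Nash-Williams Theorem together with the two Fusion Lemmas as the engine at each stage. The base case $\rank(B_{a_u})=1$ is essentially the Nash-Williams / pigeonhole fact for the one-dimensional slice $X_u$: here $\UP(B,X,u)=\{\{w_u\},X_u\}$, and an equivalence relation on the $w\in X_u$ either identifies all of them on some subspace (giving $E_{\{w_u\}}$) or separates them all on some subspace (giving $E_{X_u}$); this dichotomy is exactly the content of the Abstract Nash-Williams Theorem applied to the $2$-coloring ``$w\ E\ w'$'' restricted to $\mathcal{AE}_1^{B_{a_u}}|X_u$, or more precisely a further application to distinguish the two global possibilities. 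Since $\mathcal{AE}^B_1|X$ can be identified with $X$ itself (as remarked just before the theorem), the statement for $u=\emptyset$ is the named Canonization corollary; so I would really prove the relativized version ``there is $Y\in[u,X]$ and $P\in\UP(B,X,u)$ with $E|Y_u=E_P|Y_u$'' for all $u\le_{\fin}X$, which is what the induction needs.

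For the inductive step, suppose $\rank(B_{a_u})=\al$ and fix $u\le_{\fin}X$ and an equivalence relation $E$ on $X_u$. Let $N$ be the set of $n>\max(a_u)$ with $w_u\cup\{n\}\in\hat{X}$, and for each $n\in N$ write $a_n=a_u\cup\{n\}$, $w_n=\rho_B(a_n)$, and $X^{(n)}=\{w\in X:w\tr w_n\}\in\mathcal{E}_{B_{a_n}}$, noting $\rank(B_{a_n})<\al$. First I would handle, for each $n\in N$, the equivalence relation $E$ \emph{restricted to pairs within} $X^{(n)}$: by the induction hypothesis there is $Y_n\le X^{(n)}$ in $\mathcal{E}_{B_{a_n}}$ and a uniform projection $P_n\in\UP(B_{a_n},X,w_n)$ with $E|Y_n = E_{P_n}|Y_n$. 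By Fusion Lemma~\ref{lem.fusion2} (applied as in the proof of Theorem~\ref{thm.E_BtRs}) I can pass to $X'\in[u,X]$ below all the $Y_n$ simultaneously, and then thin $N$ to an infinite $M$ on which the ranks $\rank(P_n)$ either are all equal to some fixed $\beta<\al$ or form a strictly increasing sequence with supremum $\al$ — this is exactly clause (4) of Definition~\ref{defn.UPBXs}, and it is here that the uniform-family structure of the target $P$ is secured. The candidate projection is then $P=\bigcup_{n\in M}P_n$ together with $\{w_u\}$ when appropriate; one checks $P\in\UP(B,X',u)$ directly from (1)--(4).

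The remaining, and main, obstacle is the \emph{cross-branch} behavior of $E$: after the thinning above, $E_P$ already agrees with $E$ on any two points lying above the same $w_n$, but I still must arrange that whether two points $w\in X^{(m)}$, $w'\in X^{(n)}$ with $m\ne n$ are $E$-related is correctly predicted by whether $\pi_P(w)=\pi_P(w')$ — and since $\pi_P(w)\supseteq w_m$ and $\pi_P(w')\supseteq w_n$ are distinct whenever $m\ne n$, what must actually be forced is that \emph{no} cross-branch $E$-relations survive on the final subspace. To get this I would use a diagonal fusion: enumerate the branches and, using the Abstract Nash-Williams Theorem for the spaces $\mathcal{E}_{B_{a_n}}$ to homogenize the ``$E$-class meets branch $n$'' colorings, together with Fusion Lemma~\ref{lem.fusion1}, build $Z\le X'$ so that for each $m<n$ in the resulting branch set, every point of $Z^{(m)}$ is $E$-inequivalent to every point of $Z^{(n)}$ (one colors, for a fixed representative branch $m$ and each $n$, whether some $E$-class hits both; a Ramsey-ultrafilter-style fusion kills all such overlaps). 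The interplay between keeping the within-branch projections $P_n$ intact and simultaneously diagonalizing away cross-branch relations is the delicate bookkeeping point; once $Z$ is obtained, $E|Z_u = E_P|Z_u$ holds with $P$ restricted to $Z$, completing the induction and hence the theorem.
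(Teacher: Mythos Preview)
Your inductive setup and the within-branch step are essentially correct, but there is a genuine gap in the cross-branch step. You assert that ``what must actually be forced is that no cross-branch $E$-relations survive on the final subspace,'' and then sketch a diagonal fusion that ``kills all such overlaps.'' This is not always possible: if $E$ is the full relation on $X_u$ (every pair equivalent), then cross-branch relations persist on every subspace, and the correct projection is $P=\{w_u\}$ (rank $0$), not $\bigcup_n P_n$ (which has rank $\ge 1$). Your parenthetical ``together with $\{w_u\}$ when appropriate'' shows you sense the issue, but you give no mechanism for detecting that case, and the elimination procedure you describe would simply fail there. The coloring you propose (``for a fixed representative branch $m$ and each $n$, whether some $E$-class hits both'') is also not a coloring of a front, so Nash-Williams does not apply to it as stated.

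The paper's proof resolves this by treating the cross-branch question \emph{first}, and as a dichotomy rather than an elimination. One applies the Abstract Nash-Williams Theorem once, on $\mathcal{E}_B$ itself, to the front $\mathcal{H}=\{\fa_2^B(Y):Y\le X\}$ of second full approximations, colored by whether $x(0)\,E\,x(|x|-1)$ (note $x(0)\wedge x(|x|-1)=\emptyset$, so these always lie in distinct branches, and conversely every cross-branch pair is realized as $\{x(0),x(|x|-1)\}$ for some $x\in\mathcal{H}$). On the homogeneous $Y$ one then has either: (i) every such pair is $E$-equivalent, in which case routing through a common $w''$ with $\min(w'')>\max(w\cup w')$ shows \emph{all} pairs in $Y$ are $E$-equivalent and $P=\{\emptyset\}$; or (ii) no cross-branch pair is $E$-equivalent. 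Only in case (ii) does one invoke the induction hypothesis on each $Y_n$, thin the index set so the ranks $\rank(P_n)$ satisfy (4)(a) or (4)(b) of Definition~\ref{defn.UPBXs}, fuse via Lemma~\ref{lem.fusion1}, and set $P=\bigcup_n P_n$. This single Nash-Williams call, placed before the inductive step rather than after, is the missing idea.
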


\begin{proof}
The proof proceeds by induction on the rank of $B$, using the fact that, 
if $C =\{ \delta_n(b\setminus \{n\}):b\in B_n\}$,
then there is an isomorphic correspondence between $\mathcal{E}_C$ and $\mathcal{E}_{B_n}$.
If $\rank(B)=1$, then $\mathcal{E}_B$ is the Ellentuck space.
Given any equivalence relation $E$ on some $X\in\mathcal{E}$,
the \Erdos-Rado Theorem in \cite{Erdos/Rado50} implies there is a $Y\le X$ such that 
either for all $w,w'\in Y$, $w\, E\, w'$, or else for all $w,w'\in Y$,
$w \not\hskip-.06in E\, w'$.
In the first case, $P=\{\emptyset\}$, and in the second case, $P=Y$ provide the uniform projections in $\UP(B,Y)$ such that $E_P$ canonizes $E$ on $Y$.

Now suppose that for all $C$ with $\rank(C)<\al$, the lemma holds, and suppose that $\rank(B)=\al$.
Let $X\in\mathcal{E}_B$ and $E$ be an equivalence relation on $X$.
Let $\mathcal{H}=\{\fa_2^B(Y):Y\le X\}$, the set of all second full approximations of members below $X$.
Note that for $x\in \mathcal{H}$,
$x(0)\wedge x(|x|-1)=\emptyset$, and moreover, for any $w,w'\in X$ with $w\wedge w'=\emptyset$, there is an $x\in \mathcal{H}$ such that $\{w,w'\}=\{x(0),x(|x|-1)\}$.

Let $\mathcal{H}_0=\{x\in\mathcal{H}: x(0)\, E\, x(|x|-1)\}$, and apply the Abstract Nash-Williams Theorem to obtain  $Y\le X$ homogeneous for $\mathcal{H}_0$.
Suppose that $\mathcal{H}|Y\sse\mathcal{H}_0$,
and let $w,w'\in Y$.
Take $w''\in Y$ so that $\min(w'')>\max(w\cup w')$.
Then there are $x,y\in\mathcal{H}|Y$ such that $x(0)=w$, $y(0)=w'$, and $x(|x|-1)=y(|y|-1)=w''$.
Since $x(0)\, E\, x(|x|-1)$ and $y(0)\, E\, y(|y|-1)$,
it follows that $w\, E\, w'$.
Therefore, each pair of members of $Y$ are equivalent, and $P=\{\emptyset\}$ is the uniform projection canonizing $E|Y$.

Now suppose that $\mathcal{H}|Y\cap \mathcal{H}_0=\emptyset$.
It follows that whenever $w,w'\in Y$ with $w\cap w'=\emptyset$, 
then $w\not \hskip-.05in E\, w'$.
Let $w_n$ denote $\rho_B(\{n\})$ and
 $Y_n$ denote $\{w\in Y:  w\tre w_n\}$, and
let $N\sse \om$ be the set of all $n$ such that
$Y_n\ne\emptyset$ (equivalently, $Y_n\in\mathcal{E}_{B_n}$).
By the induction hypothesis,
for each $n\in N$, there is a uniform projection $P_n\in\UP(B_n,Y_n)$
such that for any two $w,w'\in Y_n$, $w\, E\, w'$ if and only if $\pi_{P_n}(w)=\pi_{P_n}(w')$.

There is an infinite subset $N'\sse N$ such that 
either 
for all $m,n\in N$,
$m<n$ in $N'$ implies $\rank(P_m)=\rank(P_n)$,
or else 
for all $m,n\in N$,
$m<n$ in $N'$ implies 
 $\rank(P_m)<\rank(P_n)$.
Apply the Fusion Lemma \ref{lem.fusion1}
to obtain a $Z\le X$ such that whenever $Z_n\ne\emptyset$, then 
$n\in N'$ and $Z_n\sse Y_n$.
Let $P=\bigcup\{P_n:n\in N'$ and $Z_n\ne \emptyset\}$.
Then $P$ is in $\UP(B,Z)$.

We claim that $\pi_P$ canonizes $E|Z$.
Let $w,w'\in Z$.
Notice  that for each $n\in N'$, $w_n\tle P_n$; so
if
$w\cap w'=\emptyset$, then $\pi_P(w)\ne\pi_P(w')$ and also $w\not\hskip-.05in E\, w'$.
If $w\cap w'\ne\emptyset$, then let $n$ be such that $w_n\tle w,w'$.
Then $w\, E\, w'$ if and only if $\pi_{P_n}(w)=\pi_{P_n}(w')$ if and only if $\pi_P(w)=\pi_P(w')$.
Therefore, $P$ canonizes $E|Z$.
\end{proof}

\begin{notation}\label{notn.X_uetc}
Given $u\le_{\fin}X\in\mathcal{E}_B$,
let $w_u$ denote the $\tl$-maximum in $\hat{X}$ such that $w_u\tl v(|u|)$ for every $v\in r_{|u|+1}[u,X]$.
Let $X_{w_u}$ denote $\{w\in X:w\tr w_u\}$ and
 $X_u$ denote $\{w\in X:u\cup \{w\}\in r_{|u|+1}[u,X]\}$.
Note that $X_u=\{w\in X_{w_u}: \min (w\setminus w_u)>\max(u)\}$, which we also denote as $X_{w_u}/u$.
\end{notation}

This section concludes with the next theorem, laying some  groundwork for the Ramsey-classification Theorem in the  next section.

\begin{thm}[Canonization for 1-Extensions]\label{thm.canonAR1}
Let $u\le_{\fin}X\in\mathcal{E}_B$,
 and let  $E$ be an equivalence relation on 
$r_{|u|+1}[u,X]$.
Then there is a $Z\in [u, X]$
and a $P\in \UP(B,Z,u)$
 such that  
$E$ is canonical on 
$r_{|u|+1}[u,Z]$.
\end{thm}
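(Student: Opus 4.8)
The plan is to mimic the proof of Theorem~\ref{thm.CanonicalonAE_1}, the canonization theorem for first approximations, but relativized to the subspace $\mathcal{E}_{B_{a_u}}$ sitting above the stem $u$. By Fact~\ref{fact.inductive}, the set $r_{|u|+1}[u,X]$ is in bijection with $X_u$ via $v\mapsto v(|u|)$, and $X_u$ is a member of $\mathcal{E}_{B_{a_u}}$. The equivalence relation $E$ on $r_{|u|+1}[u,X]$ transfers, under this bijection, to an equivalence relation $E'$ on the members of $X_u$, viewed as an element of $\mathcal{E}_{B_{a_u}}$.

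First I would invoke Theorem~\ref{thm.CanonicalonAE_1} (canonization for $\mathcal{AE}^B_1$) applied to the space $\mathcal{E}_{B_{a_u}}$ and the member $X_u$: there is a $Y\le X_u$ in $\mathcal{E}_{B_{a_u}}$ and a $P\in\UP(B_{a_u},X_u)$ such that $E'|Y=E_P|Y$. Here I am using the isomorphism of Fact~\ref{fact.inductive} between $\mathcal{E}_{B_{a_u}}$ (equivalently $\mathcal{E}_{B^{a_u}}$) and the abstract space, so that Theorem~\ref{thm.CanonicalonAE_1} applies verbatim after the appropriate shift; since $\rank(B_{a_u})$ may still be large, there is no induction to do here — Theorem~\ref{thm.CanonicalonAE_1} already handles all ranks. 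Next I would pull the shrinking $Y$ back up into $\mathcal{E}_B$: by the Fusion Lemma~\ref{lem.fusion2}, since $Y\le X_u$ with $Y\in\mathcal{E}_{B_{a_u}}$, there is a $Z\in[u,X]$ such that $Z_u\sse Y$. Replacing $Z$ by a further member of $[u,Z]$ if necessary (again by Lemma~\ref{lem.fusion2}), we may assume $Z_u$ itself equals an appropriate restriction of $Y$, so that $P|Z_u\in\UP(B,Z,u)$ in the sense of Notation/Definition preceding Lemma~\ref{lem.CsqsubD}. Finally, translating $E_P|Z_u$ back across the bijection $v\mapsto v(|u|)$ between $r_{|u|+1}[u,Z]$ and $Z_u$ gives exactly that $E$ restricted to $r_{|u|+1}[u,Z]$ is the equivalence relation induced by $\pi_P$, i.e.\ $E$ is canonical on $r_{|u|+1}[u,Z]$ with witness $P\in\UP(B,Z,u)$.

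The main obstacle I anticipate is purely bookkeeping rather than conceptual: one must be careful that the bijection $v\mapsto v(|u|)$ really does transport equivalence relations and uniform-projection data faithfully, i.e.\ that $\pi_P(v(|u|))=\pi_P(v'(|u|))$ is literally what one wants to mean by ``$E$ is canonical on $r_{|u|+1}[u,Z]$'', and that the two notions of $\UP$ — $\UP(B_{a_u},X_u)$ computed inside the subspace, and $\UP(B,Z,u)$ as defined in the ambient space via Notation~\ref{notn.yes} — coincide under the shift map $\delta_n$ and the isomorphism of Fact~\ref{fact.inductive}. The shift-down map $\delta_{\max(a_u)}$ and the identifications $w_u=\rho_B(a_u)$, $X_u\in\mathcal{E}_{B_{a_u}}$ must be threaded through consistently. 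Once one observes that the definition of $\UP(B,X,u)$ was explicitly set up (just before Lemma~\ref{lem.CsqsubD}) to be $\{P\in\UP(B,X,w_u):P\sse\hat{X}_u\}$, and that the $\delta_n$-shift identification of $\UP(B_n,X_n)$ with $\UP(C,Y)$ was already recorded in Theorem~\ref{thm.CanonicalonAE_1}'s setup, this obstacle dissolves. So in fact the proof reduces to: (i) quote Theorem~\ref{thm.CanonicalonAE_1} in $\mathcal{E}_{B_{a_u}}$, (ii) apply Fusion Lemma~\ref{lem.fusion2} to transfer the shrinking back into $[u,X]$, (iii) check the dictionary between $r_{|u|+1}[u,Z]$ and $Z_u$ respects everything.
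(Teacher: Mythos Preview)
Your proposal is correct and follows essentially the same route as the paper's proof: apply Theorem~\ref{thm.CanonicalonAE_1} inside $\mathcal{E}_{B_{a_u}}$ to the element $X_u$ (via the isomorphism of Facts~\ref{fact.important} and~\ref{fact.inductive}) to obtain a $Y\sse X_u$ and a uniform projection $Q$ canonizing $E$, then use Fusion Lemma~\ref{lem.fusion2} to pull back to some $Z\in[u,X]$ with $Z_u\sse Y$, and take $P=Q\cap \hat{Z}_u\in\UP(B,Z,u)$. The paper's proof is the same three-line argument; your additional paragraph on the bookkeeping between $\UP(B_{a_u},X_u)$ and $\UP(B,Z,u)$ is more explicit than the paper but not different in substance.
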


\begin{proof}
Let  $a$ denote $\rho^{-1}_B(w_u)$.
By  Theorem \ref{thm.CanonicalonAE_1},
there is a $Y_{w_u}\in\mathcal{E}_{B_{a}}$ such that $Y_{w_u}\sse X_u$ and a $Q\in\UP(B_{a},Y_{w_u})$ such that $E\re Y_{w_u}=E_Q\re Y_{w_u}$.
Fusion Lemma \ref{lem.fusion2} implies that 
there is a $Z\in [u,X]$ such that $Z_u\sse Y_{w_u}$.
Then $Q\cap Z_u$ is in $\UP(B,Z,u)$ and canonizes $E$ on $r_{|u|+1}[u,Z]$.
\end{proof}

%*************************************
%*****************************************
%****************************************
%******************************************

\section{Ramsey-classification Theorems}\label{sec.rct}

The main theorem of this section is the Ramsey-classification Theorem \ref{thm.rc} showing that for each uniform barrier $B$, any given equivalence relation on a front on $\mathcal{E}_B$  is canonized by
a projection map which is Nash-Williams and further satisfies a certain property $(*)$,
when restricted to some member of $\mathcal{E}_B$ 
  (see Definitions \ref{def.innerNW} and \ref{defn.canon} below).
A projection map projects  each member  $u\in \mathcal{AE}^B$ 
to some subtree of $\hat{u}$.  
The main theorem extends Theorem 33 of \cite{DobrinenJSL15}, which in turn extends the \Pudlak-Rodl\ Theorem  in the following sense:
 If $C\tl B$ and $R$ is an equivalence relation on a front $\mathcal{F}$ consisting of full approximations on $\mathcal{E}_C$, then 
taking $\mathcal{F}'$ to be front  of all full approximations on $\mathcal{E}_B$ extending members of $\mathcal{F}$ and taking $R'$ to be the induced equivalence relation on $\mathcal{F}'$,
 the canonical map for $R'$ on $\mathcal{F}'|X$ projects to a canonical map for $R$ on $\mathcal{F}|\pi_{C,B}(X)$.

The following notation will be used throughout.

\begin{notation}\label{defn.ext}
Let $X\in\mathcal{E}_B$, $u,v\in\mathcal{AE}^B$, and $k,n<\om$.
We shall leave the superscript $B$ off of $r^B_k(X)$ whenever no confusion will arise.
 $X(n)$  denotes the $\prec$-$n$-th member of  $X$, and $r_k(X)=\{X(n):n<k\}$.
Let
$\Ext(u,v)=\{Y\in\mathcal{E}_B:u\le_{\fin} Y\}$,
$\Ext(u,X)=\{Y\le X: u\le_{\fin}Y\}$,
and  $\Ext(u,v,X)=\Ext(u,X)\cap\Ext(v,X)$.

Let
$X/u=\{X(n): n<\om$ 
and   $\max X(n)>\max u\}$ and 
$v/u=\{v(n): n<|v|$ and $\max v(n)> \max u\}$.
Let $[v,X/u]$ denote $\{Y\in\mathcal{E}_B:v\sqsubset Y$ and $Y/v\sse X/u\}$.
Let $r_k[u,X/v]$ denote $\{z\in\mathcal{AE}^B_k:z\sqsupseteq u$ and $z/u\sse X/v\}$, and
 $r[u,X/v]$ denote  $\bigcup\{r_k[u,X/v]:  k\ge |u|\}$.
Let $\depth_X(u,v)=\max \{\depth_X(u),\depth_X(v)\}$.
\end{notation}

\begin{fact}\label{fact.useful}
Suppose  $Y\le X$ are in  $\mathcal{E}_B$
and $u,v\in\mathcal{AE}^B$ satisfy
 $v\le_{\fin} u$, $\max u=\max v$, $u\le_{\fin} X$, and $v\le_{\fin} Y$.
Then there is a $Z\in [u,X]$ such that
for each $y\le_{\mathrm{fin}} v$ and each $z\in r[y,Z/v]$, 
$z/v\sse Y$.
\end{fact}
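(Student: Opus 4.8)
The statement says: given $Y\le X$ in $\mathcal{E}_B$ and finite approximations $v\lefin u$ with $\max u=\max v$, $u\lefin X$, and $v\lefin Y$, we can find $Z\in[u,X]$ which, below $v$, is contained in $Y$ in the strong sense that every further approximation $z$ extending some $y\lefin v$ with $z/v\sse Z/v$ actually has $z/v\sse Y$. The point is that since $\max u=\max v$, the approximations $u$ and $v$ ``finish at the same place'', so everything in $Z$ beyond $u$ lies beyond $v$ as well, and we are free to require those later members to come from $Y$. First I would set $d=\depth_X(u)$ and note that $Y\le X$ together with $v\lefin Y$ and $v\lefin u\lefin X$ forces $r_d(X)=\fa$-compatible with both: the members of $u$ that do not already appear in $v$ sit in positions between $v$ and $d$, and because $\max u=\max v$ every member $X(n)$ with $n\ge d$ satisfies $\max X(n)>\max u=\max v$, so $X(n)\in X/v$.

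\textbf{Construction of $Z$.} The plan is a direct recursive (fusion-style) construction of $Z$, analogous to the argument for Axiom \textbf{A.3} in Lemma \ref{lem.A.3}. Start with $z_{|u|}=u\sse X$. Given $z_n\sqsupseteq u$ with $z_n\sse X$ for $n\ge |u|$, use Fact \ref{fact.true} to get $r_{n+1}[z_n,X]\ne\emptyset$, and I want to choose $z_{n+1}\in r_{n+1}[z_n,X]$ so that the new member $z_{n+1}(n)$ lies in $Y$. This is possible whenever $Y$ is ``dense enough'' above $z_n$: since $\max u=\max v$ and $n\ge|u|$, we have $z_{n+1}(n)\vartriangleright w_{z_n}$ with $\min(z_{n+1}(n)\setminus w_{z_n})>\max z_n\ge\max u=\max v$, so $z_{n+1}(n)\in X/v$. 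Now $Y\le X$ means $Y_{w}\in\mathcal{E}_{B_{a}}$ is an infinite-rank subspace below $X_{w}$ for each relevant $w$, so there is always a choice $z_{n+1}(n)\in Y\cap X_u$ sitting above the required stem — this is exactly where I invoke that the set of one-step extensions of $z_n$ inside $Y$ forms (by Fact \ref{fact.inductive}) a member of $\mathcal{E}_{B_{a_{z_n}}}$, hence is nonempty and closed under the needed tail restrictions. Let $Z=\bigcup_{n\ge|u|}z_n$. Then $Z\in[u,X]$ by construction, and every member $Z(n)$ with $n\ge|u|$ belongs to $Y$.

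\textbf{Verifying the tail condition.} It remains to check: for $y\lefin v$ and $z\in r[y,Z/v]$, we get $z/v\sse Y$. Fix such $y$ and $z$. Every member $w\in z/v$ satisfies $\max w>\max v=\max u$, hence $w=Z(n)$ for some $n\ge\depth_Z(u)\ge|u|$ (because the first $|u|$ members of $Z$ are exactly those of $u$ and have $\max\le\max u$, while any member of $Z$ with larger max comes strictly later). By the construction, $Z(n)\in Y$ for all $n\ge|u|$, so $w\in Y$; therefore $z/v\sse Y$, as required. The condition $\max u=\max v$ is used precisely here, to guarantee that ``beyond $v$'' coincides with ``beyond $u$'' inside $Z$.

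\textbf{Main obstacle.} The delicate point is the selection step: ensuring that at each stage we can pick the next member of $z_{n+1}$ to lie in $Y$ \emph{while} keeping $z_{n+1}$ a legitimate approximation of a member of $\mathcal{E}_B$ (i.e.\ respecting the full-approximation structure of $D_B$). This is not automatic because the admissible one-step extensions of $z_n$ are constrained by the recursive domain-space conditions (2)(a),(b) of Definition \ref{defn.domainspace}. The resolution is Fact \ref{fact.inductive}: the set $\{z_{n+1}(n): z_{n+1}\in r_{n+1}[z_n,X]\}$ equals $X_{z_n}\in\mathcal{E}_{B_{a_{z_n}}}$, and the corresponding set coming from $Y$ is a member of $\mathcal{E}_{B_{a_{z_n}}}$ contained in $X_{z_n}$ — in particular nonempty and closed under passing to a tail — so a valid choice in $Y$ always exists. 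If a cleaner bookkeeping is desired, one can instead first apply the Fusion Lemma \ref{lem.fusion2} finitely often (or once, at the stem $v$) to replace $X$ above $v$ by something already $\le Y$, and then take any $Z\in[u,X]$ with $Z/v$ inside that fused object; but the recursive construction above already suffices.
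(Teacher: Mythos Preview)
Your recursive construction has a genuine gap at the selection step. You claim that at every stage $n\ge |u|$ one can pick $z_{n+1}(n)\in Y$, but this is not true in general: the next member must satisfy $z_{n+1}(n)\vartriangleright w_{z_n}$, and $w_{z_n}$ need not belong to $\hat Y$. The stem $w_{z_n}$ is determined by the full-approximation position of $z_n$, and it may well be a proper initial segment of some member of $u\setminus v$; since only $v$ (not all of $u$) is assumed to sit inside $Y$, there may simply be no element of $Y$ extending $w_{z_n}$ at all. Your appeal to Fact~\ref{fact.inductive} does not rescue this: that fact yields $Y_{z_n}\in\mathcal{E}_{B_{a_{z_n}}}$ only under the hypothesis $z_n\le_{\fin} Y$, which you do not have (you have $z_n\le_{\fin} X$ and $v\le_{\fin} Y$, nothing more). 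Concretely, already in $\mathcal{E}_{[\omega]^2}$ one can take $u=\fa_2(\mathbb{W})$, $v=\{u(2)\}$, and $Y\le \mathbb{W}$ containing $u(2)$ but no element with the same first coordinate as $u(0)$; then $w_{z_3}$ is that first coordinate and no choice of $z_4(3)\in Y$ exists.

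The paper's construction is accordingly different from yours: it makes the choice \emph{conditional}, taking $z_{n+1}\in r_{n+1}[z_n,Y/u]$ when $w_{z_n}\in\hat Y$ and an arbitrary $z_{n+1}\in r_{n+1}[z_n,X]$ otherwise. The verification then rests not on the blanket inclusion $Z/u\sse Y$ (which is false), but on the observation that any $z\in r[y,Z/v]$ with $y\le_{\fin} v$ is built by successively extending stems that lie in $\hat Y$, so the members of $Z$ it picks up are precisely those chosen from $Y$. Your ``alternative'' via the Fusion Lemma is too vague to constitute a proof; in particular, fusing at the stem $v$ does not control the branches of $Z$ that pass through stems in $\hat u\setminus\hat v$, which is exactly where the difficulty lies.
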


\begin{proof}
Let $d=|u|$ and $z_d=u$.
 Given $z_n$ for $n\ge d$,
if $w_{z_n}\in \hat{Y}$,
then choose $z_{n+1}\in r_{n+1}[z_n,Y/u]$.
If $w_{z_n}\not\in\hat{Y}$,
then choose any  $z_{n+1}\in r_{n+1}[z_n,X]$.
It is straightforward to check that  $Z=\bigcup_{n\ge d}z_n$ 
satisfies the conclusion.
\end{proof}

Recall Definition \ref{def.frontR1} of front on a topological Ramsey space from Section \ref{sec.reviewtRs}.

\begin{defn}\label{defn.mix}
Let $\mathcal{F}$ be a  front on $\mathcal{E}_B$ and let  $R$ be an equivalence relation on $\mathcal{F}$.
Let $\hat{\mathcal{F}}=\{r_k(u):u\in\mathcal{F}$ and $k\le |u|\}$.
Suppose  $u,v\in\hat{\mathcal{F}}$ 
and $X\in\Ext(u,v)$.
We say that $X$ {\em separates} $u$ and $v$ if and only if
for all  $y\in\mathcal{F}\cap r[u,X/v]$ and $z\in \mathcal{F}\cap r[v,X/u]$, $y \not\hskip-.06in R\, z$.
We say that $X$ {\em mixes} $u$ and $v$ if and only if no $Y\in\Ext(u,v,X)$ separates $u$ and $v$.
We say that $X$ {\em decides} for $u$ and $v$ if and only if either $X$ mixes $u$ and $v$ or else $X$ separates $u$ and $v$.
\end{defn}

Note that $X\in\Ext(u,v)$  mixes $u$ and $v$ if and only if
 for each $Y\in\Ext(u,v,X)$, there are $y\in\mathcal{F}\cap r[u,Y/v]$ and $z\in\mathcal{F}\cap  r[v,Y/u]$ for which $y\, R\, z$.
The following  fact and lemma  follow from similar   proofs to those  of  Facts 24 and 25
in \cite{DobrinenJSL15}.
We do point out that the  proof of the transitivity of mixing is the one place in this article where  Abstract Ellentuck Theorem is used and the Abstract Nash-Williams Theorem does not seem to be sufficient.

\begin{fact}\label{fact.equivformsmix}
The following are equivalent for $X\in\Ext(u,v)$:
\begin{enumerate}
\item
$X$ mixes $u$ and $v$.
\item
For all $Y\in\Ext(u,v,X)$,
there are $y\in\mathcal{F}\cap r[u,Y/v]$ and $z\in\mathcal{F}\cap  r[v,Y/u]$ for which $y\, R\, z$.
\item
For all $Y\in[\depth_X(u,v),X]$, there are  $y\in\mathcal{F}\cap r[u,Y/v]$ and $z\in\mathcal{F}\cap  r[v,Y/u]$ for which $y\, R\, z$.
\end{enumerate}
\end{fact}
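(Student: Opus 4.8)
The plan is to prove the cycle of implications $(1)\Rightarrow(2)\Rightarrow(3)\Rightarrow(1)$, with the first implication essentially a restatement, the second a trivial inclusion of the relevant families, and the third the substantive step that requires a fusion/diagonalization argument. For $(1)\Rightarrow(2)$: by definition $X$ mixes $u$ and $v$ iff no $Y\in\Ext(u,v,X)$ separates $u$ and $v$; since $Y$ separates $u$ and $v$ means precisely that no such pair $y\in\mathcal F\cap r[u,Y/v]$, $z\in\mathcal F\cap r[v,Y/u]$ with $y\,R\,z$ exists, the negation gives exactly (2). This also essentially runs backwards, but to get (2)$\Rightarrow$(1) we must use that $\Ext(u,v,Y)\sse\Ext(u,v,X)$ for $Y\in\Ext(u,v,X)$, so that the failure of separation is inherited.

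For $(2)\Rightarrow(3)$: this is immediate from the fact that $[\depth_X(u,v),X]\sse\Ext(u,v,X)$. Indeed, if $Y\in[\depth_X(u,v),X]$, then $Y\le X$ and $r_{\depth_X(u,v)}(X)=r_{\depth_X(u,v)}(Y)$, so $u,v\le_{\fin}Y$ and hence $Y\in\Ext(u,v,X)$; then apply (2) to this $Y$.

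The hard direction is $(3)\Rightarrow(1)$. Suppose toward a contradiction that $X$ does not mix $u$ and $v$, i.e., there is some $Y_0\in\Ext(u,v,X)$ that separates $u$ and $v$. The issue is that $Y_0$ need not lie in $[\depth_X(u,v),X]$, so (3) does not directly apply to it. The remedy is to use Axiom \textbf{A.3}(2) (Lemma \ref{lem.A.3}) to replace $Y_0$: since $Y_0\le X$ and $u\le_{\fin}Y_0$, there is $Y_1\in[\depth_X(u),X]$ with $\emptyset\ne[u,Y_1]\sse[u,Y_0]$; iterating once more for $v$, or better, invoking Fact \ref{fact.useful} applied to the pair $(u,v)$ (using $\max u=\max v$ after first arranging this, or handling the two stems symmetrically), produces a $Z\in[\depth_X(u,v),X]$ with the property that every $y\in r[u,Z/v]$ satisfies $y/v\sse Y_0$ and every $z\in r[v,Z/u]$ satisfies $z/u\sse Y_0$. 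Then for this $Z$, any witnessing pair $y\in\mathcal F\cap r[u,Z/v]$, $z\in\mathcal F\cap r[v,Z/u]$ would also be a witnessing pair inside $Y_0$ — contradicting that $Y_0$ separates $u$ and $v$. Hence no such $Z$ admits a witnessing pair, contradicting (3). The main obstacle is getting the bookkeeping right in this replacement step: one must produce a single $Z\in[\depth_X(u,v),X]$ whose tails beyond both $u$ and $v$ are simultaneously controlled inside $Y_0$, which is exactly the content of Fact \ref{fact.useful} (or a routine two-sided variant of it via a simultaneous recursion on the members $z_n$, choosing $z_{n+1}\in r_{n+1}[z_n,Y_0/\,\cdot\,]$ whenever $w_{z_n}\in\hat Y_0$ and otherwise any extension in $X$). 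Since this mirrors the proof of Fact 24 in \cite{DobrinenJSL15}, I expect it to go through with only notational adjustments for the structure of $\mathcal E_B$.
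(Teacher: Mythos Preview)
Your proposal is correct and matches the intended argument. The paper itself omits the proof entirely, saying only that it ``follow[s] from similar proofs to those of Facts 24 and 25 in \cite{DobrinenJSL15}''; indeed the equivalence $(1)\Leftrightarrow(2)$ is already stated as a definitional unfolding in the sentence immediately preceding the Fact, and $(2)\Rightarrow(3)$ is the trivial inclusion you note. For $(3)\Rightarrow(1)$ your diagonalization starting from $r_d(X)$ and choosing the next element inside $Y_0$ whenever $w_{z_n}\in\hat{Y}_0$ (hence in particular whenever $w_{z_n}\in\hat u\cup\hat v$) is exactly the construction used in the proof of Lemma \ref{lem.A.3} and Fact \ref{fact.useful}, adapted to two stems simultaneously.

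One small point: Fact \ref{fact.useful} as stated carries the hypotheses $v\le_{\fin}u$ and $\max u=\max v$, which are not available here, so the ``arranging $\max u=\max v$'' aside is not the right fix. The direct simultaneous recursion you describe in the parenthetical is the correct route and needs no such hypothesis; I would drop the reference to Fact \ref{fact.useful} and simply carry out that recursion. Also, the conclusion you want is $y/u\sse Y_0/v$ (so that $y\in r[u,Y_0/v]$), not ``$y/v\sse Y_0$'' as written; this follows from your construction since all new elements of $Z$ beyond $r_d(X)$ have maximum exceeding $\max(u\cup v)$.
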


\begin{lem}[Transitivity of Mixing]\label{lem.transmix}
Suppose that $X$ mixes $u$ and $v$ and $X$ mixes $v$ and $z$.
Then $X$ mixes $u$ and $z$.
\end{lem}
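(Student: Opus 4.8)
The plan is to follow the standard template for transitivity of mixing, as in Fact~25 of \cite{DobrinenJSL15}, adapting it to the present spaces via the Fusion Lemmas and the Abstract Ellentuck Theorem. Assume toward a contradiction that $X$ mixes $u$ and $v$ and mixes $v$ and $z$, but $X$ does not mix $u$ and $z$; then there is some $Y\in\Ext(u,z,X)$ separating $u$ and $z$. By replacing $X$ with a suitable member of $[\depth_X(u,v,z),X]$ obtained from Fusion Lemma~\ref{lem.fusion2} applied along the relevant stems (so that the restriction to extensions of each of $u$, $v$, $z$ is controlled inside $Y$), I may as well assume $X$ itself separates $u$ and $z$ while still mixing $u,v$ and $v,z$; this uses Fact~\ref{fact.equivformsmix} to pass freely between the various equivalent formulations of mixing.

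The heart of the argument is a fusion/diagonalization over approximations to members of the front $\mathcal{F}$ extending $v$. First I would set $d=\depth_X(u,v,z)$ and recursively build a descending sequence, or rather directly build a single $Z\in[\depth_X(u,v,z),X]$, by going through the $\prec$-least approximations $w\in r[v,X/\{u,z\}]$ one at a time: for each such $w$, since $X$ mixes $u$ and $v$, there exist $y\in\mathcal{F}\cap r[u,\cdot]$ and a completion of $w$ into $\mathcal{F}$ with $y\,R\,(\text{that completion})$; and since $X$ mixes $v$ and $z$, that same completion is $R$-related to some $y'\in\mathcal{F}\cap r[z,\cdot]$. Using transitivity of $R$ one obtains $y\,R\,y'$ with $y$ extending $u$ and $y'$ extending $z$ inside the built condition. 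Doing this cofinally, with the Fusion Lemma~\ref{lem.fusion1} ensuring the pieces glue into a genuine member $Z\in\mathcal{E}_B$ below $X$, produces a $Z\in\Ext(u,z,X)$ witnessing that $X$ does \emph{not} separate $u$ and $z$ — but by the reduction above every member of $\Ext(u,z,X)$ was supposed to separate them, a contradiction. One must be careful that the extensions of $v$ needed to link a given $y$ (over $u$) with a given $y'$ (over $z$) can be chosen disjoint enough, i.e.\ with $\min$ above $\max(u\cup z)$; this is where the definitions of $X/u$, $v/u$, and $r[u,X/v]$ in Notation~\ref{defn.ext}, together with Fact~\ref{fact.useful}, do their work.

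The one genuinely delicate point — and the place where, as the paper itself remarks, the Abstract Nash-Williams Theorem is not enough and one needs the full Abstract Ellentuck Theorem~\ref{thm.AET} — is that the relevant set of ``bad'' approximations (those $w$ over $v$ for which the linking $y\,R\,y'$ \emph{fails} in every extension) is not obviously a Nash-Williams family; one really wants to apply Ramsey-ness to a set defined by an $\exists Y$-over-a-condition clause, which is only known to have the property of Baire in the Ellentuck topology, not to be closed/open. So the step I expect to be the main obstacle is verifying that the auxiliary coloring of approximations extending $v$ is Ramsey in the strong Ellentuck sense and extracting a homogeneous $Z$ on which the ``linking'' always succeeds; once that is in hand, the transitivity of $R$ plus the disjointness bookkeeping closes the argument routinely. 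I would model this step line-for-line on the corresponding passage in \cite{DobrinenJSL15}, substituting Lemmas~\ref{lem.fusion1} and~\ref{lem.fusion2} and Fact~\ref{fact.useful} wherever that proof used the finite-dimensional analogues.
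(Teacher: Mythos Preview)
Your proposal aligns with the paper's own treatment: the paper does not give a proof of Lemma~\ref{lem.transmix} either, but simply remarks that it ``follows from similar proofs to those of Facts 24 and 25 in \cite{DobrinenJSL15}'' and singles out exactly the point you identify --- that this is the one place where the Abstract Ellentuck Theorem (rather than the Abstract Nash-Williams Theorem) is genuinely needed. So at the level of approach, you and the paper agree.

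One remark on the shape of your sketch. Your middle paragraph describes a recursion over approximations $w\in r[v,X/\{u,z\}]$, asserting that for each such $w$ mixing of $u$ and $v$ produces a linked completion. That is not quite what mixing gives you: mixing of $u$ and $v$ guarantees, for every $Y$, the existence of \emph{some} $R$-related pair, not that any particular partial $w$ can be completed into such a pair. The standard argument in \cite{Dobrinen/Todorcevic14} and \cite{DobrinenJSL15} does not recurse over approximations of $v$; instead it applies the Abstract Ellentuck Theorem directly to an Ellentuck-open set of full members $Z\in[d,X]$ (those $Z$ in which a single $b\sqsupseteq v$ in $\mathcal{F}$ is $R$-related both to some $a\sqsupseteq u$ and to some $c\sqsupseteq z$). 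Density of this open set is what mixing of $u,v$ and of $v,z$ provides, and the Ellentuck theorem then yields a $Z$ below which the linking always succeeds, contradicting separation. You correctly flag that this step needs more than Nash-Williams; just be aware that the object to which Ramsey-ness is applied is a set of members of $\mathcal{E}_B$, not a coloring of finite approximations over $v$.
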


%\begin{nd}
%We should check these just to be sure. - this seems to use the AET!
%\end{nd}

Next, we define the notion of a hereditary property, and give   a general lemma about fusion  to obtain a member of $\mathcal{E}_B$ on which a hereditary property holds.

\begin{defn}\label{hered}
A property  $P(u,X)$ defined on $\mathcal{AE}^B\times\mathcal{E}_B$
 is  {\em hereditary} if whenever $X\in\Ext(u)$ and $P(u,X)$ holds, then also $P(u,Y)$ holds for all $Y\in[\depth_X(u),X]$.
Similarly, a property $P(u,v,X)$ defined on $\mathcal{AE}^B\times\mathcal{AE}^B\times\mathcal{E}_B$
 is {\em hereditary} if whenever $P(u,v,X)$ holds, then also $P(u,v,Y)$ holds for all $Y\in [\depth_X(u,v),X]$.
\end{defn}

\begin{lem}\label{lem.hered}
Let $P(\cdot,\cdot)$ be a hereditary property on $\mathcal{AE}^B\times\mathcal{E}_B$.
If whenever $u\le_{\fin}X$ there is a $Y\in[\depth_X(u),X]$ such that $P(u,Y)$,
then for each $Z\in\mathcal{E}_B$, there is a $Z'\le Z$ such that for all $u\in\mathcal{AE}^B|Z'$,
$P(u,Z')$ holds.

Likewise, suppose  $P(\cdot,\cdot,\cdot)$ is  a hereditary property on $\mathcal{AE}^B\times\mathcal{AE}^B\times\mathcal{E}_B$.
If whenever $X\in \Ext(u,v)$ there is a $Y\in[\depth_X(u,v),X]$ such that $P(u,v,Y)$,
then for each $Z\in\mathcal{R}$, there is a $Z'\le Z$ such that for all $u,v\in\mathcal{AE}^B|Z'$,
$P(u,v,Z')$ holds.
\end{lem}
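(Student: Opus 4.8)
The plan is to run a standard fusion argument, using the Abstract Ellentuck Theorem (Theorem \ref{thm.AET}) for the space $\mathcal{E}_B$, exactly as in the analogous lemmas of \cite{DobrinenJSL15} and \cite{Dobrinen/Todorcevic14}. For the first statement, fix $Z\in\mathcal{E}_B$. I would build a decreasing sequence $Z=Z_0\ge Z_1\ge\cdots$ together with a fusion: at stage $n$, enumerate the (finitely many) $u\in\mathcal{AE}^B_n$ with $u\le_{\fin}Z_{n-1}$, and successively shrink inside $[\depth(u),\cdot]$ using the hypothesis to arrange $P(u,\cdot)$ for each such $u$, while at the same time freezing the first $n$ restriction coordinates so that the resulting diagonal limit $Z'$ is a genuine member of $\mathcal{E}_B$ below $Z$. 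Concretely, one arranges $r_n^B(Z_n)=r_n^B(Z')$ and $Z_n\le Z_{n-1}$, so $Z'=\bigcup_n r_n^B(Z_n)\in\mathcal{E}_B$. Then for any $u\in\mathcal{AE}^B|Z'$, say $|u|=n$, we have $u\le_{\fin}Z_n$ (since $Z'\le Z_n$ and $u$ was treated at or before stage $n$), and $P(u,Z_n)$ was secured with $Z'\in[\depth_{Z_n}(u),Z_n]$; by heredity of $P$, $P(u,Z')$ holds.

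The only point requiring a little care is that when I shrink $Z_{n-1}$ below $[\depth(u),Z_{n-1}]$ to get $P(u,\cdot)$ for one $u$, I must not destroy $P(v,\cdot)$ for a previously handled $v\in\mathcal{AE}^B_n$. This is exactly where heredity is used: since the shrinking happens inside $[\depth_{Z_{n-1}}(u),Z_{n-1}]$ and the new member lies in $[\depth(v),\cdot]$ relative to the previous one, $P(v,\cdot)$ is preserved. Equivalently, one can phrase this as: the set of $X\in\mathcal{E}_B$ with $X\le Z$ for which $P(u,X)$ holds for all $u\le_{\fin}X$ is, by the Abstract Ellentuck Theorem, the kind of set one obtains by a fusion through a front, and heredity plus the hypothesis guarantee the fusion does not stall. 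I would simply cite the identical fusion bookkeeping from \cite[Lemma~26]{DobrinenJSL15} (or the corresponding lemma in \cite{Dobrinen/Todorcevic14}), since axioms \textbf{A.1}--\textbf{A.4} hold for $\mathcal{E}_B$ by Theorem \ref{thm.E_BtRs}, and the argument there is insensitive to which topological Ramsey space one works in.

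For the second statement, the argument is verbatim the same with pairs: build $Z'\le Z$ by a fusion that, at stage $n$, runs through all pairs $u,v\in\mathcal{AE}^B_{\le n}$ with $u,v\le_{\fin}Z_{n-1}$, shrinking inside $[\depth(u,v),\cdot]$ to force $P(u,v,\cdot)$, using heredity of $P(\cdot,\cdot,\cdot)$ to preserve the finitely many previously obtained instances. Again $Z'=\bigcup_n r_n^B(Z_n)$, and for any $u,v\le_{\fin}Z'$ with $\max(|u|,|v|)=n$ we have $P(u,v,Z_n)$ and $Z'\in[\depth_{Z_n}(u,v),Z_n]$, so $P(u,v,Z')$ follows by heredity. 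I expect the main (and essentially only) obstacle to be the standard one: setting up the indexing so that the diagonal limit is actually in $\mathcal{E}_B$ — i.e., that each shrinking step can be taken to fix one more restriction coordinate — which is guaranteed by axiom \textbf{A.3} for $\mathcal{E}_B$ (Lemma \ref{lem.A.3}). I would therefore keep the writeup short, spelling out the stage-$n$ step and the heredity preservation, and referencing the fusion machinery already established.
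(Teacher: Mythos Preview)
Your approach is correct and matches the paper's: the paper omits the proof entirely, simply noting it is a straightforward fusion argument using Lemma~\ref{lem.fusion2} and is essentially identical to Lemma~4.6 of \cite{Dobrinen/Todorcevic14}, which is exactly what you outline and cite. One small bookkeeping slip: at stage $n$ you should enumerate the finitely many $u$ with $u\le_{\fin} r_n(Z_{n-1})$ (equivalently, $\depth_{Z_{n-1}}(u)\le n$), not all $u\in\mathcal{AE}^B_n$ with $u\le_{\fin} Z_{n-1}$, since the latter set is infinite and indexing by $|u|$ rather than depth would miss those $u$ whose depth exceeds their length.
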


The proof of Lemma \ref{lem.hered} is straightforward, using the Fusion Lemma \ref{lem.fusion2};  being very similar to that of  Lemma 4.6 in \cite{Dobrinen/Todorcevic14}, we omit it.

\begin{lem}\label{lem.decide}
Given any front $\mathcal{F}$ and function $f:\mathcal{F}\ra\om$,
there is an $X\in\mathcal{E}_B$ such that for all $u,v\in\hat{\mathcal{F}}|X$,
$X$ decides $u$ and $v$.
\end{lem}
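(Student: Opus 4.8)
The plan is to express the relation ``$X$ decides $u$ and $v$'' as a hereditary property of triples $(u,v,X)$ and then feed it to the fusion principle of Lemma~\ref{lem.hered}. Let $R$ be the equivalence relation on $\mathcal{F}$ given by $u\, R\, v$ iff $f(u)=f(v)$, and read all the notions of Definition~\ref{defn.mix} relative to this $R$. For $u,v\in\mathcal{AE}^B$ and $X\in\mathcal{E}_B$, declare $P(u,v,X)$ to hold iff either $u\notin\hat{\mathcal{F}}$, or $v\notin\hat{\mathcal{F}}$, or $X\notin\Ext(u,v)$, or else $X$ decides $u$ and $v$. By the three-variable form of Lemma~\ref{lem.hered} it then suffices to verify that (i) $P$ is hereditary, and (ii) whenever $u,v\in\hat{\mathcal{F}}$ and $X\in\Ext(u,v)$ there is a $Y\in[\depth_X(u,v),X]$ with $P(u,v,Y)$; applying Lemma~\ref{lem.hered} with $Z=\bW_B$ then produces an $X\in\mathcal{E}_B$ that decides every pair $u,v\in\hat{\mathcal{F}}|X$, which is the desired conclusion.

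For (i), suppose $u,v\in\hat{\mathcal{F}}$, $X\in\Ext(u,v)$ decides $u$ and $v$, and $Y\in[\depth_X(u,v),X]$. If $X$ mixes $u$ and $v$, then $\Ext(u,v,Y)\sse\Ext(u,v,X)$, so no member of $\Ext(u,v,Y)$ separates $u$ and $v$, and hence $Y$ mixes $u$ and $v$. If instead $X$ separates $u$ and $v$, then $Y\sse X$ gives $Y/v\sse X/v$ and $Y/u\sse X/u$, whence $r[u,Y/v]\sse r[u,X/v]$ and $r[v,Y/u]\sse r[v,X/u]$; since the condition defining separation holds over the larger index sets it holds over the smaller ones, so $Y$ separates $u$ and $v$. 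In either case $Y$ decides $u$ and $v$, so $P(u,v,Y)$ holds; thus $P$ is hereditary.

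For (ii), fix $u,v\in\hat{\mathcal{F}}$ and $X\in\Ext(u,v)$; since $u\le_{\fin}X$ and $v\le_{\fin}X$ we have $d:=\depth_X(u,v)<\infty$, so $[\depth_X(u,v),X]=[d,X]$ is defined. If $X$ mixes $u$ and $v$, take $Y=X\in[d,X]$, which decides $u$ and $v$ by mixing. If $X$ does not mix $u$ and $v$, then the equivalence of $(1)$ and $(3)$ in Fact~\ref{fact.equivformsmix} furnishes a $Y\in[d,X]$ admitting no $y\in\mathcal{F}\cap r[u,Y/v]$ and $z\in\mathcal{F}\cap r[v,Y/u]$ with $y\, R\, z$; that is, $Y$ separates, hence decides, $u$ and $v$. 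Either way $P(u,v,Y)$ holds, which completes the verification.

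The only substantive ingredient is Fact~\ref{fact.equivformsmix}, which converts the failure of mixing into the existence of a separating restriction at depth $d$; the deeper topological-Ramsey machinery (the Abstract Ellentuck Theorem, entering through the transitivity of mixing, Lemma~\ref{lem.transmix}) is already packaged inside the facts of this section. Granting those, the present lemma is a routine fusion argument, so I anticipate no genuine obstacle; the one point demanding care is keeping $P$ a bona fide property on all of $\mathcal{AE}^B\times\mathcal{AE}^B\times\mathcal{E}_B$, so that Lemma~\ref{lem.hered} applies verbatim.
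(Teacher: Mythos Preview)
Your proof is correct and follows exactly the approach the paper indicates: the paper simply states that the lemma ``follows immediately from Lemma~\ref{lem.hered} and the fact that mixing and separating are hereditary properties,'' and you have spelled this out carefully, including the use of Fact~\ref{fact.equivformsmix} to pass from ``$X$ does not mix'' to a $Y\in[\depth_X(u,v),X]$ that separates. One minor point: the front $\mathcal{F}$ need not be on $\bW_B$, so you should apply Lemma~\ref{lem.hered} with $Z$ equal to whatever member of $\mathcal{E}_B$ the front lives below, rather than $\bW_B$ specifically.
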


Lemma \ref{lem.decide} follows immediately from Lemma \ref{lem.hered}  and the fact that mixing and separating are hereditary properties.

Given a set $T'$ of $\tle$-incomparable members of $\widehat{\bW}_B$,
we call $T$ a {\em  projection of $T'$} if 
$T$ is a collection  of $\tle$-incomparable members of $\hat{T}'$.
In other words, $T$ is the set of maximal nodes of some subtree of $\hat{T}'$.
$T$ is a {\em proper projection of $T'$} if $T$ is a projection of $T'$ which is not equal to $T'$.

\begin{defn}\label{def.innerNW}
We call a  map $\vp$ on a front $\mathcal{F}\sse \mathcal{AE}^B$ on $X$
\begin{enumerate}
\item
{\em inner} if for each $u\in \mathcal{F}$, 
$\vp(u)$ is a projection of $u$.
\item
{\em Nash-Williams} if whenever $u,v\in\mathcal{F}$ and there is some $n\le |u|$ such that $\vp(u)\cap\widehat{r_n(u)}=\vp(v)$,
then $\vp(u)=\vp(v)$.
\item 
{\em irreducible} if it is inner and 
the following property $(*)$ holds:
\begin{enumerate}
\item[$(*)$]
There is a $Y\le X$ such that
given $u\in\mathcal{F}|Y$, there is a $v\in\mathcal{F}$ such that \begin{equation}
\vp(u)=\vp(v)= u\wedge v,
\end{equation}
\end{enumerate}
where $u\wedge v$ denotes the set of maximal nodes in $\hat{u}\cap\hat{v}$.
\end{enumerate}
\end{defn}

\begin{defn}[Canonical equivalence relations on a front]\label{defn.canon}
Let $\mathcal{F}$ be a front on $\mathcal{E}_B$.
An equivalence relation $R$ on $\mathcal{F}$ is {\em canonical} if  and only if
there is an irreducible map $\vp$ canonizing $R$ on $\mathcal{F}$,
meaning that for all $u,v\in\mathcal{F}$, 
$u\, R\, v \llra \vp(u)=\vp(v)$.
\end{defn}

Irreducible maps on $\mathcal{E}_B$ 
are unique in the following sense.

\begin{thm}\label{thm.irred}
Let  $R$ be an equivalence relation on some front $\mathcal{F}$ on $\mathcal{E}_B$.
Suppose $\vp$ and $\vp'$ are irreducible maps  canonizing $R$.
Then there is an $X\in\mathcal{E}_B$ such that 
for each $u\in\mathcal{F}|X$,
$\vp(u)=\vp'(u)$.
\end{thm}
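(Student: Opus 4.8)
The plan is to show that two irreducible maps canonizing the same equivalence relation $R$ must agree on a common refinement, by exploiting the defining property $(*)$ together with the Nash-Williams-ness and innerness that (implicitly) come with being irreducible. First I would fix a front $\mathcal{F}$ and an equivalence relation $R$, with irreducible maps $\vp$ and $\vp'$. By applying property $(*)$ for $\vp$ we obtain $Y_0 \le X_0$ so that for every $u\in\mathcal{F}|Y_0$ there is $v\in\mathcal{F}$ with $\vp(u)=\vp(v)=u\wedge v$; applying $(*)$ for $\vp'$ and intersecting (using the Fusion Lemma \ref{lem.fusion2} or just $\sigma$-closure of $\mathcal{E}_B$ to pass to a common lower bound), we may assume a single $X\in\mathcal{E}_B$ witnesses $(*)$ for both maps simultaneously.

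Next I would argue that on $\mathcal{F}|X$ both maps compute the same canonical object. Since $\vp$ canonizes $R$, for $u\in\mathcal{F}|X$ and the witness $v$ with $\vp(u)=\vp(v)=u\wedge v$, we have $u\, R\, v$; since $\vp'$ also canonizes $R$, this forces $\vp'(u)=\vp'(v)$. Now I would use innerness of $\vp'$: $\vp'(u)$ is a projection of $u$ and $\vp'(v)$ is a projection of $v$, and since they are equal they are a common projection, hence a projection of $u\wedge v = \vp(u)$. The goal is to push this to an actual equality $\vp'(u)=\vp(u)$; for that I would run the symmetric argument starting from $\vp'$'s witness to get $\vp(u)$ as a projection of $\vp'(u)$, and then conclude $\vp(u)=\vp'(u)$ since mutual projection (both being subtrees, i.e. sets of $\tle$-incomparable nodes, of each other's closure) forces equality. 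A cleaner route, which I would probably take: show directly that $\vp(u)=u\wedge v$ is the \emph{minimal} projection $p$ of $u$ such that there is some $v'\in\mathcal{F}$ with $p = u\wedge v'$ and $u\, R\, v'$ — i.e., give an intrinsic (map-independent) characterization of $\vp(u)$ in terms of $R$, $\mathcal{F}$, and $X$ — and then observe $\vp'$ must produce the same object. The Nash-Williams property of the maps, possibly after a further application of Lemma \ref{lem.decide} or the Abstract Nash-Williams Theorem on a suitable front refining $\mathcal{F}$, is what guarantees minimality is well-defined and realized.

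The main obstacle I anticipate is precisely establishing this minimality/uniqueness of the witnessing projection: property $(*)$ only asserts existence of \emph{some} $v$ with $\vp(u)=u\wedge v$, and a priori different irreducible maps could pick incomparable projections of $u$ that both happen to equal $u\wedge v$ for their respective $v$'s. Ruling this out requires showing that any projection $p$ of $u$ with "$p = u\wedge v$ for some $R$-equivalent $v\in\mathcal{F}|X$" which is $\tle$-below another such $p'$ can be "separated out", contradicting that $X$ mixes the relevant approximations — this is where I would invoke the mixing/separating machinery of Definition \ref{defn.mix} and the hereditary-fusion Lemma \ref{lem.hered} to pass to a member of $\mathcal{E}_B$ deciding all relevant pairs of approximations, so that the set of realizable witness-projections for each $u$ has a $\tle$-least element computed uniformly. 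Once that intrinsic characterization is in place, the conclusion $\vp(u)=\vp'(u)$ on the final common lower bound is immediate, and a last application of Fusion/$\sigma$-closure packages all the refinements into the single $X\in\mathcal{E}_B$ asserted by the theorem.
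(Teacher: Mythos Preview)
Your symmetric mutual-projection argument contains the right core idea, and in fact your observation that property $(*)$ for $\vp$ alone forces any inner map $\vp'$ canonizing $R$ to have $\vp'(u)\subseteq\widehat{\vp(u)}$ on the $(*)$-witness for $\vp$ is correct: from $\vp(u)=\vp(v)=u\wedge v$ one gets $u\,R\,v$, hence $\vp'(u)=\vp'(v)$, and innerness forces $\vp'(u)\subseteq\hat{u}\cap\hat{v}=\widehat{u\wedge v}=\widehat{\vp(u)}$. This is a cleaner route to the maximality statement than the paper's Fact~\ref{fact.trmaxl}, which instead uses the explicit structure (the maps $\pi_x$) of the canonical $\vp$ built inside Theorem~\ref{thm.rc}.

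There is, however, a real gap where you pass to a single $X$ witnessing $(*)$ for both maps. Neither the Fusion Lemma nor $\sigma$-closure of $(\mathcal{E}_B,\sse^{\Fin^B})$ yields a common $\sse$-lower bound for two arbitrary members $Y_1,Y_2\in\mathcal{E}_B$; the order $(\mathcal{E}_B,\sse)$ is not directed. Property $(*)$ as stated only asserts the existence of \emph{some} witness $Y\le X$, not that witnesses exist densely, so you cannot simply reapply $(*)$ for $\vp'$ below $Y_0$. Without a common witness your symmetric step (getting $\vp(u)$ a projection of $\vp'(u)$ at the same $u$) does not go through.

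The paper avoids this by arguing asymmetrically. It compares an arbitrary irreducible $\gamma$ with the \emph{specific} canonical $\vp$ produced in the proof of Theorem~\ref{thm.rc}, and invokes Fact~\ref{fact.trmaxl} --- which holds on all of $\mathcal{F}|W$, not merely on a $(*)$-witness --- to obtain $\gamma(u)\tle\vp(u)$. Then only $(*)$ for $\gamma$ is needed: on its witness $W'\le W$, if $\gamma(u)\tl\vp(u)$ properly, the $v\in\mathcal{F}|W$ with $\gamma(u)=\gamma(v)=u\wedge v$ forces $\vp(u)\not\subseteq\widehat{u\wedge v}$, hence $\vp(u)\ne\vp(v)$, contradicting $u\,R\,v$. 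Thus only one $(*)$-witness is ever in play, and no common lower bound is required.

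Your alternative ``intrinsic minimality'' route is underdeveloped; the obstacle you anticipate is real, and the paper does not pursue it.
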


 Theorem \ref{thm.irred} will  be proved after  Theorem \ref{thm.rc}.

We now prove  the  Ramsey-classification theorem for equivalence relations on fronts.
Like the proof of the Ramsey-classification theorem for the spaces $\mathcal{E}_k$, $k<\om$, in \cite{DobrinenJSL15},
the proof  follows the same general outline as that  of Theorem 4.14 in \cite{Dobrinen/Todorcevic14}, and the facts which have proofs  similar to those in \cite{Dobrinen/Todorcevic14} will be stated here without proof.
However, the majority of the proof of this theorem involves new arguments,
required because of the infinitely splitting structure of the spaces and the infinite rank of the barriers.

\begin{thm}[Ramsey-classification Theorem]\label{thm.rc}
Let $B$ be a uniform barrier on $\om$.
Given $V\in\mathcal{E}_B$ and   an equivalence relation $R$ on a front $\mathcal{F}$ on $V$,
there is a $W\le V$ such that $R$ restricted to $\mathcal{F}|W$ is canonical.
\end{thm}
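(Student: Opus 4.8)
The plan is to mimic the inductive architecture used throughout this paper: prove the Ramsey-classification Theorem by induction on the complexity of the front $\mathcal{F}$, exactly as in the proof of Theorem 4.14 in \cite{Dobrinen/Todorcevic14} and Theorem 33 in \cite{DobrinenJSL15}, but substituting the two Fusion Lemmas, the canonization theorems for $\mathcal{AE}^B_1$ and for 1-extensions (Theorems \ref{thm.CanonicalonAE_1} and \ref{thm.canonAR1}), and the uniform-projection machinery of Section \ref{sec.canoneqrelAE1} wherever the finite-branching arguments of the earlier papers break down. First I would reduce to a form of the statement indexed by the ``initial node'': for each $u\in\hat{\mathcal{F}}$ and each $X\in\Ext(u)$ I want to find $Y\in[\depth_X(u),X]$ and an irreducible map on $\mathcal{F}_u:=\{v\in\mathcal{F}:u\sqsubseteq v\}$ restricted below $Y$; by Lemma \ref{lem.hered} (applied to the hereditary property ``an irreducible canonizing map exists on $\mathcal{F}_u|Y$'') a single diagonalization then yields one $W\le V$ working simultaneously for all $u\in\hat{\mathcal{F}}|W$, in particular for $u=\emptyset$, which is the theorem.

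The inductive step divides according to whether $u\in\mathcal{F}$ (base case: $\mathcal{F}_u=\{u\}$, and $\vp(u)=u$ works) or $u\in\hat{\mathcal{F}}\setminus\mathcal{F}$. In the latter case, the $1$-extensions $r_{|u|+1}[u,X]$ split ``along'' the space $\mathcal{E}_{B_{a_u}}$ sitting above $u$ (Fact \ref{fact.inductive}). I would first apply Lemma \ref{lem.decide} to get $X$ deciding all pairs in $\hat{\mathcal{F}}|X$, then use Transitivity of Mixing (Lemma \ref{lem.transmix}) to see that ``$X$ mixes $u^\frown w$ and $u^\frown w'$'' is an equivalence relation $E$ on the members $w$ of $X_u\in\mathcal{E}_{B_{a_u}}$; canonize $E$ by Theorem \ref{thm.CanonicalonAE_1} to obtain $P\in\UP(B,X,u)$ and (after Fusion Lemma \ref{lem.fusion2}) a $Y$ on which $u^\frown w$ and $u^\frown w'$ are mixed iff $\pi_P(w)=\pi_P(w')$. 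For each $w\in Y_u$ the induction hypothesis (applied to the front $\mathcal{F}_{u^\frown w}$, which lives inside a copy of some $\mathcal{E}_C$ with $C$ a strictly lower-rank barrier via Fact \ref{fact.important}) gives an irreducible $\vp_{u^\frown w}$; a Fusion diagonalization over the infinitely many $\prec$-first nodes $w$, arranged so that the canonizing maps cohere in rank exactly as in the proof of Theorem \ref{thm.CanonicalonAE_1}, produces $W$ and a combined map. The candidate inner map is $\vp(v)=\vp_{u^\frown w}(v)$ when the mixing-class of $v(|u|)$ is non-trivial, and $\vp(v)=\pi_P(v(|u|))\cup(\text{something on }v\setminus\hat{w_u})$ pushed up appropriately when it is trivial — the precise bookkeeping is what one copies from \cite{Dobrinen/Todorcevic14}, adjusted to the tree-valued (rather than set-valued) projections here.

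The heart of the argument, and the main obstacle, is verifying that the assembled $\vp$ is genuinely irreducible in the sense of Definition \ref{def.innerNW}(3), i.e. that property $(*)$ holds and that $\vp$ is Nash-Williams; this is where the new ideas ``central to the structure of the spaces $\mathcal{E}_B$'' (mentioned in the Overview) are needed. The difficulty is twofold: the branching above each node is infinite, so one cannot enumerate the finitely many ways of extending, and the barrier $B$ has infinite rank, so the recursion on $\rank(B_{a_u})$ must be interleaved with the recursion on $|u|$ in a way that keeps the ranks of the uniform projections $P_u$ decreasing or cofinally increasing in the controlled manner dictated by clause (4) of Definition \ref{defn.UPBXs}. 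Concretely, to get $(*)$ one must, given $u\in\mathcal{F}|W$, build a ``partner'' $v\in\mathcal{F}$ with $\vp(u)=\vp(v)=u\wedge v$: one pushes $u$ and a fresh copy of the relevant tail past each other, using Fact \ref{fact.useful} and Fusion Lemma \ref{lem.fusion2} to realize the required extension inside $W$, and then invokes mixing (via Fact \ref{fact.equivformsmix}) to conclude $u\,R\,v$ while the separation of the non-shared parts forces $\vp(v)$ to agree with $\vp(u)$ only on the meet. Carrying this out so that it is compatible, at every node of $\hat{\mathcal{F}}$ simultaneously, with a single fusion sequence is the technical core; once $(*)$ and the Nash-Williams property are in hand, $u\,R\,v\Llra\vp(u)=\vp(v)$ follows from the mixing/separating dichotomy exactly as in the cited proofs, and Theorem \ref{thm.irred} (uniqueness) then disposes of the ``further, this projection is unique'' clause by a parallel fusion comparing $\vp$ and $\vp'$ node by node.
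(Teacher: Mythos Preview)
Your inductive architecture has a genuine gap. The assertion that ``$\mathcal{F}_{u^\frown w}$ lives inside a copy of some $\mathcal{E}_C$ with $C$ a strictly lower-rank barrier via Fact \ref{fact.important}'' is false. Fact \ref{fact.important} (and Fact \ref{fact.inductive}) tell you only that the collection of \emph{single} 1-extensions above $u$ is governed by $\mathcal{E}_{B_{a_u}}$; once you have fixed $w=v(|u|)$, the next node $v(|u|+1)$ is determined by $w_{u\cup\{w\}}$, which need not extend $w_u$ at all---it can be $\emptyset$ or any initial segment anywhere in $\hat{u}\cup\{w\}$. So the tail front $\mathcal{F}_{u^\frown w}$ remains a front on $\mathcal{E}_B$, not on a lower-rank space, and there is no induction hypothesis to invoke. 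Your ``interleaving'' of a recursion on $\rank(B_{a_u})$ with the recursion on $|u|$ does not set up a well-founded induction either, since $\rank(B_{a_{u\cup\{w\}}})$ can equal $\rank(B)$.

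The paper's proof does not proceed by induction on front complexity at all; neither, in fact, does Theorem 4.14 of \cite{Dobrinen/Todorcevic14}. Instead, after shrinking $V$ so that it decides every pair (Lemma \ref{lem.decide}) and so that for every $u\in(\hat{\mathcal F}\setminus\mathcal F)|V$ there is a uniform projection $P_u$ canonizing mixing on $r_{|u|+1}[u,V]$ (this is your first step, done once globally via Lemma \ref{lem.hered}), the map $\vp$ is defined \emph{directly}: $\vp(v)$ is the set of $\tle$-maximal elements of $\{\pi_{r_i(v)}(v(i)):i<|v|\}$. There is no recursive assembly of $\vp_{u^\frown w}$'s. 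The substance of the argument is then a chain of structural lemmas showing that this globally defined $\vp$ actually canonizes $R$: Lemma \ref{lem.miximpliesp=q} (if $P_u,P_v$ are both nontrivial and $w_u\ne w_v$ then $u,v$ are separated), Lemma \ref{lem2.5} (if $u,v$ are mixed and both projections nontrivial, then $P_u$ and $P_v$ agree below some $Y$), and the crucial Lemma \ref{lem.phiknows} (mixing of 1-extensions of mixed $u,v$ is determined by equality of projections). These are exactly the ``new ideas central to the structure of $\mathcal E_B$'' you allude to, and they are not bookkeeping: their proofs use fronts of second full approximations in the auxiliary spaces $\mathcal{E}_{B_{a_*}}$ together with several applications of Abstract Nash--Williams. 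Your proposal does not contain an analogue of this compatibility step, and without it there is no way to see that $\vp(u)=\vp(v)$ iff $u\,R\,v$ for $u,v$ with $u(0)\ne v(0)$.
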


\begin{proof}
By Lemma \ref{lem.decide}, shrinking $V$ if necessary, we may assume  that for all $u,v\in \hat{\mathcal{F}}|V$,
$V$ decides for $u$ and $v$.
Recall that for any $u\le_{\fin} X\in\mathcal{E}_B$, 
mixing on $r_{|u|+1}[u,X]$ is an equivalence relation
 on $X_u:=\{y(|u|):y\in r_{|u|+1}[u,X]\}$.
For a uniform projection $P_u\in\UP(B,Y,u)$, let $\pi_{P_u}$ denote the projection map which takes $w\in Y_u$ and sends it to {\em the} $p\in P_u$ such that $p\tle w$.

\begin{claim}\label{claim.mix1}
Given $X\in\mathcal{E}_B$,
there is a $Y\le X$ such that for each $u\in (\hat{\mathcal{F}}\setminus\mathcal{F})|Y$,
there is a uniform projection $P_u\in\UP(B,Y,u)$ such that 
for all $y,z\in r_{|u|+1}[u,Y]$,
$Y$ mixes $y$ and $z$ if and only if $\pi_{P_u}(y(|u|))=\pi_{P_u}(z(|u|))$.
\end{claim}

\begin{proof}
Let $u\le_{\fin}Z\le X$ be given, let $n=|u|$, and
let $E_u$ denote the equivalence relation on $Z_u$
induced by the equivalence relation of 
 mixing on 
$ r_{|u|+1}[u,Z]$.
By the Canonization Theorem for 1-Extensions \ref{thm.canonAR1},
there is a $Z'\in [u,Z]$ and a $P_u\in\UP(B,Z',u)$
 such that  $E_u|Z'_u$ is canonical, given by $\pi_{P_u}$.
Since mixing is a hereditary property,
Lemma \ref{lem.hered}
implies there is a $Y\le X$ such that for all $u\le_{\fin} Y$,
   $E_u$ is canonical on  $Y_u$.
\end{proof}

Thus, by shrinking $V$ if necessary, we may also assume that $V$ satisfies Claim \ref{claim.mix1}.
For  $u\in \mathcal{AE}^B|V$, 
let $P_u$ denote the uniform projection
$P\in\UP(B,V,u)$ given by Claim \ref{claim.mix1} which determines the mixing relation on $r_{|u|+1}[u,V]$,
and let $\pi_u$ denote $\pi_{P_u}$.

\begin{defn}[The maps $\vp$ and $\vp'$]\label{def.proj}
For $v\in \hat{\mathcal{F}}|V$, 
define 
$\vp(v)$
to be the set of all $\tle$-maximal nodes in $\{\pi_{r_n(v)}(v(n)):   n<|v|\}$; and define $\vp'(v)$ to be the set $\{\pi_{r_n(v)}(v(n)):   n<|v|\}$.
\end{defn}

Thus, $\vp$ is by definition an inner map.

\begin{rem}
We shall work mostly with $\vp'$ and show that it canonizes the equivalence relation $R$ on some $W$.
Toward the end of the proof of this theorem,
we shall  show that 
 for all $u,v\in\hat{\mathcal{F}}|W$,  $\vp(u)=\vp(v)$ if and only if $\vp'(u)=\vp'(v)$.
\end{rem}

The next fact is straightforward, its  proof   closely resembling  that of  Claim 4.17 in \cite{Dobrinen/Todorcevic14}.

%\begin{proof}
%Let $u\in\mathcal{F}|V$ and $i<|u|$, and suppose that $\vp(r_i(u))$ is a %projection of $r_i(u)$.  (This is trivially true for $i=0$.)
%If $\pi_{r_i(u)}(u(i))=\emptyset$, then $\vp(r_{i+1}(u))=\vp(r_i(u))$ %which by the induction hypothesis is a projection of $r_i(u)$ and hence of %$r_{i+1}(u)$.
%Otherwise, $\pi_{r_i(u)}(u(i))\tr w_{r_i(u)}$.
%Since the least member of $u(i)\setminus w_{r_i(u)}$ must be greater %than  $\max(r_i(u))$, we see that $\pi_{r_i(u)}(u(i))$ is $\tle$-
%incomparable with every member of $\vp(r_i(u))$.
%Thus, $\vp(r_{i+1}(u))$ is a projection of $r_{i+1}(u)$.
%By induction on $i<|u|$, we obtain that $\vp(u)$ is a projection of $u$.
%\end{proof}

\begin{fact}\label{claim.4.17}
Suppose $u\in(\hat{\mathcal{F}}\setminus\mathcal{F})|V$ and $v\in\hat{\mathcal{F}}|V$.
\begin{enumerate}
\item
Suppose $u\in\mathcal{AE}^B_{n}|V$  and  $y,z\in r_{n+1}[u,V]$.
If $V$ mixes $y$ and $v$ and $V$ mixes $z$ and $v$, then $\pi_u(y(n))=\pi_u(  z(n))$.
\item
If $u\sqsubset v$ and $\vp'(u)\setminus\{\emptyset\}=\vp'(v)\setminus\{\emptyset\}$,
then $V$ mixes $u$ and $v$.
\end{enumerate}
\end{fact}

Recall that $X_u/v$ denotes $\{y(m):y\in r_{m+1}[u,X/v]\}$, which is the same as
$\{w\in X:w\tr w_u$ and $\min(w\setminus w_u)>\max(u\cup v)\}$.

\begin{fact}\label{fact.incompiffdisjt}
Let $u,v\in\mathcal{AE}^B|V$,  $m=|u|$ and $n=|v|$.
If $w_u=w_v$, then 
$V_u/v=V_v/u$.
If  $w_u\ne w_v$, then
$V_u/v$ and $V_v/u$ are disjoint.
\end{fact}

\begin{proof}
If $w_u=w_v$, then it is clear that $V_u/v=\{w\in  V:w\tr w_u$ and $\min(w\setminus w_u)>\max(u\cup v)\}
=V_v/u$.

If $w_u$ and $w_v$ are $\tle$-incomparable, then 
$V_u/v\sse\{w\in V:w\tr w_u\}$ and 
$V_v/u\sse \{w\in V:w\tr w_v\}$, so they are disjoint.
If   $w_u\tl w_v$,
then for each $y\in r_{m+1}[u,V/v]$,  
$\min(y(m)\setminus w_u)>\max(v)>\max(w_v)$, and hence $y(m)\not\tr w_v$.
Likewise, if $w_v\tl w_u$, then each $z\in r_{n+1}[v,V/u]$ 
satisfies
 $z(n)\not\tr w_u$.
Thus, $w_u\ne w_v$ implies that
$V_u/v$ and 
$V_v/u$ are disjoint.
\end{proof}

\begin{lem}\label{lem.miximpliesp=q}
For each pair $u,v\le_{\fin} V$,
if $P_u\ne\{w_u\}$, $P_v\ne\{w_v\}$ and  $w_u\ne w_v$, then $V$ separates $u$ and $v$.
\end{lem}

\begin{proof}
Let $d_0=\depth_{V}(u,v)$ and let $d\ge d_0$ be the  least integer such that $w_{r_d(V)}$ equals $w_u$ or  $w_v$.
Without loss of generality, assume that $w_{r_d(V)}=w_u$.
Then
 for each $X\in [d,V]$, $X(d)\tr w_u$ and $X(d)\not\tr w_v$, since $w_v\ne w_u$.
Let $\mathcal{H}$ be the collection of all $x\in \mathcal{AE}^B|V$ such that 
\begin{enumerate}
\item
$x\sqsupset r_d(V)$; 
\item
$x\cap V_v/u$ is a second full approximation in $\mathcal{E}_{B_{a_v}}$; and 
\item
$|x|$ is minimal with respect to (2).
\end{enumerate}
In other words, (2) and (3) state that  $\{w\in x:w\tr w_v$ and $\min(w\setminus w_v)>\max (r_d(V))\}=\fa_2^{B_{a_v}}(Z)$ for some $Z\in\mathcal{E}_{B_{a_v}}$, 
(since for $w\tr w_v$, $\min(w\setminus w_v)>\max (r_d(V))$ if and only if $\min(w\setminus w_v)>\max (u)$).
For $x\in\mathcal{H}$, let $k_x<l_x$ denote the numbers such that $x(k_x)$ is minimal in $x\cap (V_v/u)$ and $x(l_x)$ is maximal in $x\cap (V_v/u)$.
Then in fact $l_x=|x|-1$ and $\pi_v(x(k_x))\ne\pi_v(x(l_x))$.
Notice that $\mathcal{H}$ is a front on $[d,V]$.

Let $\mathcal{H}_0$ denote the set of those $x\in \mathcal{H}$ such that 
$V$ mixes $u\cup x(d)$ and $v\cup x(k_x)$.
Apply the Abstract Nash-Williams Theorem to obtain an $X\in [d,V]$ homogeneous for the partition of $\mathcal{H}$ induced by $\mathcal{H}_0$.
If $\mathcal{H}|X\sse\mathcal{H}_0$,
then in fact $V$ also mixes $u\cup x(d)$ and $v\cup x(l_x)$ for each $x\in \mathcal{H}|X$, since $x(l_x)$ is  the last member of some  full approximation in $\mathcal{E}_{B_{a_v}}$ and hence is equal to $y(k_y)$ for some $y\in\mathcal{H}|X$.
Then taking any $x\in\mathcal{H}|X$,  we have that $\pi_v(k_x)\ne\pi_v(l_x)$, $V$ mixes $u\cup x(d)$ and $v\cup x(k_x)$, and $V$ mixes $u\cup x(d)$ and $v\cup x(l_x)$.
Transitivity of mixing implies that $V$ mixes $v\cup x(k_x)$ and $v\cup x(l_x)$, contradicting that $\pi_v(x(k_x))\ne\pi_v(x(l_x))$.
Therefore, it must be the case that $(\mathcal{H}|X)\cap \mathcal{H}_0=\emptyset$.
For each pair $w\in X_u/v$ and $w'\in X_v/u$ with $w=X(m)$ and $w'=X(n)$ for some $m<n$,
 there is an $x\in\mathcal{H}|X$ such that $x(d)=w$ and $x(k_x)=w'$.
Hence, $V$ separates $u\cup w$ and $v\cup w'$.

To take care of the  case when 
$w\in X_u/v$ and $w'\in X_v/u$ with  $w=X(n)$ and $w'=X(m)$ for some $n>m$,
let $\mathcal{K}$ be the collection of all $x\in\mathcal{AE}^B|X$ such that 
\begin{enumerate}
\item
$x\sqsupset r_d(V)$;
\item
$x(|x|-1)\tr w_u$ and there are $i_x<j_x \in (d,|x|-1)$ such that $x(i_x)\tr w_v$ and $x(j_x)\tr w_v$; and
\item
$|x|$ is minimal with respect to (2).
\end{enumerate}
Let $\mathcal{K}_0$ consist of those $x\in \mathcal{K}$ for which $V$ mixes $u\cup x(|x|-1)$ and $v\cup x(i_x)$.
Apply the Abstract Nash-Williams Theorem to obtain an $X'\in[d,X]$ homogeneous for the partition of $\mathcal{K}$ induced by $\mathcal{K}_0$.
Since,  $\pi_v(x(i_x))\ne\pi_v(x(j_x))$ for each $x\in\mathcal{K}$,
a similar argument to the one above yields that 
 $(\mathcal{K}_0|X')\cap\mathcal{K}=\emptyset$.
Hence, $X'$ separates $u$ and $v$, which implies the Lemma, since $V$ already decided $u$ and $v$.
\end{proof}

\begin{lem}\label{lem2.5}
Let $u,v\in\mathcal{AE}^B|V$.
Suppose that $V$ mixes $u$ and $v$, and both
 $P_u\ne\{w_u\}$ and $P_v\ne\{w_v\}$.
Then for each $X\in\Ext(u,v,V)$, there is a $Y\in[d,V]$
such that 
$P_u|Y_u/v=P_v|Y_v/u$, where $d\ge \depth_X(u,v)$ is least such that $w_{r_d(X)}=w_u$.
\end{lem}

\begin{proof}
The hypotheses along with Lemma \ref{lem.miximpliesp=q} imply that $w_u=w_v$, which we denote by $w_*$.
Let $X\in\Ext(u,v,V)$ be given, and let $d \ge \depth_X(u,v)$ be least such that $w_{r_d(X)}=w_*$.
Let $a_*$ denote $\rho^{-1}_B(w_*)$.
Without loss of generality, suppose that $\depth_X(v)=d$.
Then  $X_u/v=X_v/u=X_v$, which we shall denote as $X_*$.
By Lemma \ref{lem.CsqsubD} and the Fusion Lemma \ref{lem.fusion2},
there is an $X'\in [d,X]$ such that 
$P_u|X'_u/v$ and  $P_v|X'_v/u$ are 
related by exactly one of $\tl$, $=$, or $\tr$.

Let $X'_*$ denote $X'\cap X_*$.
Let $\mathcal{H}$ be the collection of all $x\in\mathcal{AE}^B|X'$ such that 
\begin{enumerate}
\item
$x\sqsupset r_d(X)$;
\item
$x\cap X'_*$ is a second full approximation in $\mathcal{E}_{B_{a_*}}$; and 
\item
$|x|$ is minimal with respect to (2).
\end{enumerate}
Then $\mathcal{H}$ is a front on $[d,X']$.
For $x\in\mathcal{H}$, let $k_x$ denote the index of the least member of $x\cap X'_*$.
Let $\fa_1^*(x)$ denote $\{x(k_x)\}$ and 
$\fa_2^*(x)$ denote  $x\cap X'_*$; these are  the  exactly first and second full approximations of $x\cap X'_*$ in $\mathcal{E}_{B_{a_*}}$, respectively.

Let $\mathcal{H}_0$ be the subset of those $x\in\mathcal{H}$ such that $V$ mixes  $u\cup x(k_x)$ and $v\cup x(k_x)$.
Let $\mathcal{H}_1$ be the collection of  those $x\in\mathcal{H}$ such that for some $w\in\fa^*_2(x)\setminus \fa^*_1(x)$,
$V$ mixes $u\cup x(k_x)$ and $v\cup w$.
Let $\mathcal{H}_2$  the collection of those $x\in\mathcal{H}$ such that for some $w\in\fa^*_2(x)\setminus \fa^*_1(x)$,
$V$ mixes $v\cup x(k_x)$ and $u\cup w$.
Applying the Abstract Nash-Williams Theorem three times, obtain a $Y\in[d,X']$ which is homogeneous for all three partitions of $\mathcal{H}$ induced by the $\mathcal{H}_i$, $i<3$.

The rest of the proof of this lemma proceeds by showing that for each $i<3$, it must be the case that 
 $(\mathcal{H}|Y)\cap \mathcal{H}_i=\emptyset$, 
and then arguing that this implies
$V$ separates $u$ and $v$, contradicting the hypothesis of the lemma.
This will lead to the conclusion that 
$P_u|X'_u/v\not\tl P_v|X'_v/u$.
The same argument, interchanging $u$ and $v$, yields that
$P_u|X'_u/v\not\tr P_v|X'_v/u$.
Therefore, the only possibility is $P_u|X'_u/v=P_v|X'_v/u$, which proves the lemma.

Let $x,y\in\mathcal{H}|Y$ such that $\pi_u(x(k_x))=\pi_u(y(k_y))$ but $\pi_v(x(k_x))\ne\pi_v(y(k_x))$.
Then $V$ separates $v\cup x(k_x)$ and $v\cup y(k_y)$.
Now if $\mathcal{H}|Y\sse \mathcal{H}_0$, then 
 $V$ mixes $u\cup x(k_x)$ and $u\cup y(k_y)$, $V$ mixes $u\cup x(k_x)$ and $v\cup x(k_x)$, and $V$ mixes $u\cup y(k_y)$ and $v\cup y(k_y)$;
hence
$V$ mixes $v\cup x(k_x)$ and $v\cup y(k_y)$, a contradiction.
Thus, $(\mathcal{H}|Y)\cap \mathcal{H}_0=\emptyset$.

Next, suppose that $\mathcal{H}|Y\sse\mathcal{H}_1$.
Fix any  $x\in\mathcal{H}|Y$. 
Recall  that $x(k_x)$ determines the number $p$ such that 
every second full approximation in  $\mathcal{AE}_{B_{a_*}}$ extending $x(k_x)$ has cardinality $p$.
For $y\in\mathcal{H}|Y$ such that $y\sqsupset r_{k_x+1}(x)$,
$y_*$ is a second full approximation in $\mathcal{AE}_{B_{a_*}}$; hence
  $\{y_*(0),\dots,y_*(p)\}=y_*$.

Fix $r_{k_x+1}(x)$, and let $\mathcal{K}=\{y\in\mathcal{H}|Y:y\sqsupset  r_{k_x+1}(x)\}$.
Thus, $\mathcal{K}$ is a front on $[r_{k_x+1}(x),Y]$.
For each $j<p$, let 
 $\mathcal{K}_j$ be the collection of $y\in\mathcal{H}|Y$ such that 
$y\sqsupset r_{k_x+1}(x)$ and $V'$ mixes $u\cup x(k_x)$ and $v\cup y_*(j+1)$.
Apply the Abstract Nash-Williams Theorem $p$ times to obtain a $Y'\in [r_{k_x+1}(x),Y]$ which is homogeneous for each of the partitions of $\mathcal{K}$ induced by $\mathcal{K}_j$, $j<p$.

Since $\mathcal{K}|Y'\sse\mathcal{H}|Y\sse\mathcal{H}_1$,
for each $y\in\mathcal{K}|Y'$, there is some $j<p$ for which $u\cup x(k_x)$ and $v\cup y_*(j+1)$ are mixed by $V$.
Fixing any such pair $y,j$,
it follows that $\mathcal{K}|Y'\sse\mathcal{K}_j$, since 
$\mathcal{K}|Y'$ is homogeneous for the partition $\mathcal{K}_j$, $\mathcal{K}\setminus \mathcal{K}_j$.
Let $w_{\wedge}$ denote the meet of $x(k_x)$ and $y_*(j+1)$.
Then $w_{\wedge}$ is the meet of $x(k_x)$ and $z_*(j+1)$ for each $z\in\mathcal{K}|Y'$.
There are two cases to consider.

\it Case 1.
\rm
$w_{\wedge}\tl \pi_v(x(k_x))$.
Since $P_u|Y_*\tl P_v|Y_*$ and $w_{\wedge}\tl y_*(j+1)$,
it follows that $\pi_v(z_*(j+1))\ne\pi_v(x(k_x))$, for each $z\in\mathcal{K}|Y'$.
Thus, there is a $z\in\mathcal{K}|Y'$ such that $\pi_v(z_*(j+1))\ne\pi_v(y_*(j+1))$.
Then $\mathcal{K}\setminus \mathcal{K}_j$ implies that $V$ mixes 
$u\cup x(k_x)$ and $v\cup y_*(j+1)$, and $V$ mixes $u\cup x(k_x)$ and $v\cup z_*(j+1)$.
But this implies that $V$ mixes $v\cup y_*(j+1)$ and $v\cup x_*(j+1)$, contradicting $\pi_v(y_*(j+1))\ne\pi_v(z_*(j+1)$.

\it Case 2.
\rm
$w_{\wedge}\tre \pi_v(x(k_x))$.
Then $\pi_v(y_*(j+1))=\pi_v(x(k_x))$, so $V$ mixes $v\cup x(k_x))$ and $v\cup y_*(j+1)$.
At the same time,
$\mathcal{K}\setminus \mathcal{K}_j$ implies that $V$ mixes 
$u\cup x(k_x)$ and $v\cup y_*(j+1)$.
Hence, $V$ mixes $u\cup x(k_x)$ and $v\cup x(k_x)$.
But this implies that $\mathcal{K}|Y'\sse \mathcal{H}_0$, whereas we already showed that $(\mathcal{H}|Y)\cap\mathcal{H}_0=\emptyset$, a contradiction.

Since in both cases we reach a contradiction, it must be the case that $(\mathcal{H}|Y)\cap \mathcal{H}_1=\emptyset$.
A similar argument, shows that $(\mathcal{H}|Y)\cap\mathcal{H}_2$ must be empty.
Thus, for each $i<3$, $(\mathcal{H}|Y)\cap \mathcal{H}_i$ is empty.
But this implies that $Y$ separates $u$ and $v$, a contradiction.
Therefore, 
the assumption that $P_u|X'_u/v\tl P_v|X'_v/u$ was incorrect.
As stated above, this leads to the conclusion of the lemma.
\end{proof}

The next lemma is the crux of the proof of the theorem.

\begin{lem}\label{lem.phiknows}
There is a $W\le V$ such that 
for all $u,v\in(\hat{\mathcal{F}}\setminus\mathcal{F})|W$  which are mixed by $W$, the following holds:
For all $y\in r_{|u|+1}[u,W/v]$ and $z\in r_{|v|+1}[v,W/u]$,
$W$ mixes $y$ and $z$ if and only if $\pi_u(y(|u|))=\pi_v(z(|v|))$.
\end{lem}

\begin{proof}
We will show that for all pairs $u,v\in (\hat{\mathcal{F}}\setminus\mathcal{F})|V$
which are  mixed by $V$,
for each $X\in\Ext(u,v,V)$,
there is a $Z\in[\depth_X(u,v),X]$ such that 
for all  $y\in r_{|u|+1}[u,Z/v]$ and $z\in r_{|v|+1}[v,Z/u]$,
$V$ mixes $y$ and $z$ if and only of $\pi_u(y(|u|))=\pi_v(z(|v|))$.
The lemma then follows from  Fact \ref{fact.equivformsmix} and Lemma \ref{lem.hered}.

Suppose
$u,v\in(\hat{\mathcal{F}}\setminus\mathcal{F})|V$
are mixed by $V$.
Let $m=|u|$ and  $n=|v|$.

\begin{claim}\label{claim.bothE_0}
$P_u=\{w_u\}$ if and only if $P_v=\{w_v\}$.
\end{claim}

\begin{proof}
Suppose toward a contradiction that $P_u=\{w_u\}$ but $P_v\ne\{w_v\}$.
Fact \ref{claim.4.17} (1) implies 
that there is at most one $E_v$ equivalence class of 
members $z$ of $r_{n+1}[v,V/u]$
for which $z$ is mixed with each member of $r_{m+1}[u,V/v]$.
If each $z\in r_{n+1}[v,V/u]$ is not mixed with any $y\in r_{m+1}[u,V/z]$,
then $V$ separates $u$ and $v$, a contradiction.
So, suppose  $z\in r_{n+1}[v,V/u]$ is mixed with some $y\in r_{m+1}[u,V/v]$.
By Fact \ref{claim.4.17} (2), every two members $y,y'$ of $r_{m+1}[u,V/v]$ are mixed, since  $P_u=\{w_u\}$.
Hence, $V$ mixes $z$ with every $y\in r_{m+1}[u,V/v]$.
Let $d'=\depth_V(u,v)$, and 
take $Z\in [d',V]$ such that 
the minimum number in $\bigcup Z \setminus \bigcup r_{d'}(V)$ is larger than  $\max (z)$.
Then for each $z'\in r_{n+1}[v,Z/u]$, $\pi_v(z')>\pi_v(z)$, so $z'  \hskip-.03in\not\hskip-.05in E_v\, z$.
Hence, $z'$
is separated from each $y\in r_{m+1}[u,Z/v]$. 
But this contradicts that $V$ mixes $u$ and $v$.
Therefore, $P_v=\{w_v\}$.
\end{proof}

If
  $P_u=\{w_u\}$ and  $P_v=\{w_v\}$,  then given any
$X\in\Ext(u,v,V)$,
 for all 
 $y\in r_{m+1}[u,X/v]$ and 
$z\in r_{n+1}[v,X/u]$,
$V$ mixes $y$ and $z$, by Fact \ref{claim.4.17} (2) and transitivity of mixing. 
At the same time, $\pi_u(y(m))=\pi_v(z(n))=\emptyset$.
In this case simply let $Z=X$.

For the rest of the proof of this lemma, assume that $P_u\ne\{w_u\}$ and $P_v\ne\{w_v\}$.
Let  $X\in\Ext(u,v,V)$ be given and
let $d=\depth_X(u,v)$.
Since we are assuming that $V$ mixes $u$ and $v$,
 Lemma \ref{lem.miximpliesp=q} implies that $w_u=w_v$, which we shall simply denote by $w_*$.
Let $X_*$ denote $X_u/v$, which, we point out, is equal to  $X_v/u$, and 
let  $a_*$ denote $\rho^{-1}_B(w_*)$.
By Lemma \ref{lem2.5},
there is an $X'\in [d,X]$ such that $P_u|X'_u/v=P_v|X'_v/u$.
Without loss of generality, assume $X$ already satisfies this, and
let $P$ denote $P_u|X_*$.

\begin{claim}\label{claim.mixiff=}
There is a $Z\in [d,X]$ such that for all $y\in r_{m+1}[u,Z/v]$ and $z\in r_{n+1}[v,Z/u]$,
$Z$ mixes $y$ and $z$ if and only if $\pi_u(y(m))=\pi_v(z(n))$.
\end{claim}

\begin{proof}
Let $d_0\ge d$ be least such that  $X(d_0)\in X_*$. 
Then 
$w_{r_{d_0}(X)}=w_*$ and 
$\{Y(d_0):Y\in [d_0,X]\}= X_*$.
For $x\in\mathcal{AE}^B|X$, let $x_*$ denote $x\cap X_*$.
Let $\mathcal{K}$ be the set of those $x\in\mathcal{AE}^B|X$ such that 
$x\sqsupset r_{d_0}(X)$, $x_*$ is a second full approximation in $\mathcal{E}_{B_{a_*}}$,
and $x(|x|-1)\in x_*$.
%(The last part states that $|x|$ is minimal including a second full %approximation in $\mathcal{E}_{B_{a_*}}$.)
For $x\in\mathcal{K}$, let $l(x_*)$ denote the cardinality of $x_*$; hence $x_*=\{x_*(l):l<l(x_*)\}$.
Let $\mathcal{K}_0$ consist of those $x\in\mathcal{K}$ such that there is an $l< l(x_*)$ such that either
(a)
$\pi_P(x_*(0))=\pi_P(x_*(l))$ and $V$ separates $u\cup x_*(0)$ and $v\cup x_*(l)$;
or
(b)
$\pi_P(x_*(0))\ne\pi_P(x_*(l))$ and $V$ mixes $u\cup x_*(0)$ and $v\cup x_*(l)$.
Take $X'\in[d_0,X]$ homogeneous for the partition of $\mathcal{K}$ induced by $\mathcal{K}_0$.

Construct $Z\in [d_0,X']$ inductively as follows.
Noting that $w_{r_{d_0}(X')}=w_*$, let $X_0=X'$ and let $w_0$ denote $X_0(d_0)$.
For the inductive step, suppose we are given $w_i=X_i(d_i)\in X_*$, and
 let $l(w_i)$ denote the cardinality of any (hence every) second full approximation in $\mathcal{E}_{B_{a_*}}$ with $w_i$ as its least member.
Let
$\mathcal{K}^{w_i}$ denote the set of all $x\in\mathcal{K}|X'$ such that $x_*(0)=w_i$.
For each $l<l(w_i)$, let
$\mathcal{K}_l^{w_i}$ denote the set of all $x\in\mathcal{K}^{w_i}$ such that 
$\pi_P(x_*(l))=\pi_P(w_i)$
and $V$ separates $u\cup \{w_i\}$ and $v\cup x_*(l)$.
Apply the Abstract Nash-Williams Theorem to each of
the partitions of $\mathcal{K}^{w_i}$ induced by
 $\mathcal{K}_l^{w_i}$,  $l<l(w_i)$,
to obtain an $X_{i+1}\in [d_i+1,X_i]$ homogeneous for each of these partitions.
Let $d_{i+1}$ be least above $d_i$ such that $X_{i+1}(d_{i+1})\in X_*$ and let $w_{i+1}$ denote $X_{i+1}(d_{i+1})$.
Let $Z=\bigcup_{i<\om} r_{d_i +1}(X_i)$.

\begin{subclaim}\label{subclm.1}
For each $x\in\mathcal{K}|Z$ and each $l<l(x_*)$,
if $V$ mixes $u\cup x_*(0)$ and $v\cup x_*(l)$,
then $\pi_P(x_*(0))=\pi_P(x_*(l))$.
\end{subclaim}

\begin{proof}
Suppose there is an $x\in\mathcal{K}|Z$ and $l<l(x_*)$  such that $V$ mixes 
$u\cup x_*(0)$ and $v\cup x_*(l)$,
but $\pi_P(x_*(0))\ne\pi_P(x_*(l))$.
Then letting  $w_{\wedge}=x_*(0)\wedge x_*(l)$, the meet of $x_*(0)$ and $x_*(l)$, 
we see that $w_{\wedge}\tl\pi_P(x_*(l))$.
Thus, there is a $y\in\mathcal{K}|Z$ such that $y_*(0)=x_*(0)$ (hence $l(y_*)=l(x_*)$) and $\pi_P(y_*(l))\ne\pi_P(x_*(l))$.
Let $i$ be the integer such that $x_*(0)=w_i$.
Since $Z$ is homogeneous for the partition of $\mathcal{K}^{w_i}$ induced by $\mathcal{K}^{w_i}_l$,
it follows that $V$ mixes $u\cup y_*(0)$ and $v\cup y_*(l)$.
Since $x_*(0)=y_*(0)$, transitivity of mixing implies that $V$ mixes 
$v\cup x_*(l)$ and $v\cup y_*(l)$, contradicting that  $\pi_P(y_*(l))\ne\pi_P(x_*(l))$.
\end{proof}

It follows from Subclaim \ref{subclm.1}
that no $x\in\mathcal{K}|Z$ can satisfy (b).

\begin{subclaim}\label{subclm.2}
For each $x\in\mathcal{K}|Z$,
if there is an $l\le l(x_*)$ satisfying (a),
then for all $j\le l(x_*)$ with $\pi_P(x_*(j))=\pi_P(x_*(0))$, 
$V$ separates $u\cup x_*(0)$ and $v\cup x_*(j)$.
\end{subclaim}

\begin{proof}
Suppose that  $x\in\mathcal{K}|Z$ and $l\le l(x_*)$ satisfy $\pi_P(x_*(0))=\pi_P(x_*(l))$ and $V$ separates $u\cup x_*(0)$ and $v\cup x_*(l)$.
Let $j<l(x_*)$ be any index such that $\pi_P(x_*(j))=\pi_P(x_*(l))$.
Then $V$ mixes $v\cup x_*(l)$ and $v\cup x_*(j)$.
Then $V$ must separate $u\cup x_*(0)$ and $v\cup x_*(j)$
(for the alternative would imply $V$ mixes $u\cup x_*(0)$ and $v\cup x_*(l)$).
\end{proof}

Now if $\mathcal{K}|Z\sse\mathcal{K}_0$,
then it cannot be because of satisfying (b), by Subclaim \ref{subclm.1},
so it must be because (a) holds for each $x\in\mathcal{K}|Z$.
Subclaim \ref{subclm.2} then implies that $Z$ separates $u$ and $v$, a contradiction.
Therefore, $\mathcal{K}|Z\cap\mathcal{K}_0$ must be empty.
This proves the claim.
\end{proof}

By Claim \ref{claim.mixiff=} and Lemma \ref{lem.hered},
the Lemma holds.
\end{proof}

This last section of the proof of the Ramsey-classification Theorem involves putting the previous lemmas, claims, and facts together to show that 
$u\, R\, v$   if and only if $\vp'(u)\setminus\{\emptyset\}=\vp'(v)\setminus\{\emptyset\}$ if and only if  $\vp(u)=\vp(v)$, for all $u,v\in\mathcal{F}|W$, and checking that $\vp$ is in fact irreducible.

It is important to point out the following facts about the projection maps.

\begin{facts}\label{facts.superusefulstructureresults}
Let $u\in\hat{\mathcal{F}}|W$.
For $i<|u|$, let $p_i^u$ denote $\pi_{r_i(u)}(u(i))$.
\begin{enumerate}
\item
If $p_i^u\ne\emptyset$,
then $p^u_i\tr w_{r_i(u)}$.
\item
If $p^u_i\ne\emptyset$, then whenever $j<|u|$ with $j\ne i$ and $p^u_j\tre p_i^u$,
it must be the case that   $j>i$ and $p^u_j\tr p^u_i$.
\item
Let $I_u$ be the subset of $i<|u|$ for which $p^u_i\ne\emptyset$.
Then the set $\{p^u_i:i\in I_u\}$ has no duplicates; that is,
 $i\ne j$ in $I_u$ implies  $p^u_i\ne p^u_j$.
Thus, $\vp'(u)\setminus\{\emptyset\}=\{p^u_i:i\in I_u\}$.
\item
For $i,j\in I_u$,  $i<j$ implies $p^u_i\prec p^u_j$.
\end{enumerate}
\end{facts}

\begin{proof}
Suppose $p^u_i\ne\emptyset$.
Then $P_{r_i(u)}\tr\{w_{r_i(u)}\}$,
so $p^u_i\tr w_{r_i(u)}$ and hence (1) holds.

Fix $i<|u|$ and suppose $j<|u|$ is given satisfying  $i\ne j$ and $p^u_j\tre p^u_i$.
Recall that  the definition of  $w_{r_i(u)}$ implies that $w_{r_i(u)}=u(i)\wedge r_i(u)$.
Further,  there are no members  $w\in u$ such that both $w\tre p^u_i$ and $w<_{\mathrm{lex}} u(i)$, since that would contradict the definition of $w_{r_i(u)}$ and that $w_{r_i(u)}\tl p^u_i$.
So if  $j<i$ and $u(j)<_{\mathrm{lex}} u(i)$, then $p_j^u$ cannot equal $p^u_i$.
For the other case, if $j<i$ and $u(i)<_{\mathrm{lex}} u(j)$,
then $u(i)\wedge u(j)\tle w_{r_i(u)}$, which is $\tl p^u_i$.
Hence,
$u(i)\wedge u(j)\not\tre p^i_u$, so again, $p^j_u$ cannot equal $p^u_i$.

For $j>i$, if $p^u_j= p^u_i$, then $w_{r_j(u)}\tre p^u_i$, since $w_{r_j(u)}=u(j)\wedge r_j(u)$.
But this implies  $p^u_j$ must be $\emptyset$, since  projection of $u(j)$ to $w_{r_j(u)}$ or some initial segment of it implies that $\pi_{r_j(u)}$ is the projection to the emptyset.
Thus, (2) holds.
(3) follows from (2).

For (4), 
notice that for any $j\in I_u$,
$p^u_j\ne\emptyset$ implies that $p^u_j\tr w_{r_j(u)}$,
and since $\min(u(j)\setminus w_{r_j(u)})$ is strictly greater than $\max(r_j(u))$,
it follows that $p^u_j\succ $ every member of $\widehat{r_j(u)}$.
Thus, if  $i<j$, then $p^u_i\in \widehat{r_j(u)}$ implies $p^u_i\prec p^u_j$.
\end{proof}

Let $\vp''(u)$ denote $\vp'(u)\setminus\{\emptyset\}$.
Then $\vp''(u)=\{p^u_i:i\in I_u\}$.

\begin{claim}\label{claim4.19}
For $u,v\in \hat{\mathcal{F}}|W$, if $\vp''(u)=\vp''(v)$,
then $W$ mixes $u$ and $v$.
Hence, for $u,v\in\mathcal{F}|W$, if $\vp''(u)=\vp''(v)$, then $u\, R\, v$.
\end{claim}

\begin{proof}
Suppose that $\vp''(u)=\vp''(v)$.
Using the notation in Fact \ref{facts.superusefulstructureresults},
let $\lgl p^u_i:i\in I_u\rgl$ and $\lgl p^v_j:j\in I_v\rgl$ denote the $\prec$-increasing enumerations of  $\vp''(u)$ and 
 $\vp''(v)$, respectively.
Since $\vp''(u)=\vp''(v)$, $|I_u|$ must equal $|I_v|$, so let $k=|I_u|$.
Enumerate $I_u$ as $i_0<\dots<i_{k-1}$
and $I_v$ as $ j_0<\dots<j_{k-1}$.
Since  the sequences 
 $\lgl p^u_i:i\in I_u\rgl$ and $\lgl p^v_j:j\in I_v\rgl$ 
are both in $\prec$-increasing order, the only way the sets $\vp''(u)$ and $\vp''(v)$ 
 can be equal is if for each $l<k$, $p^u_{i_l}=p^v_{j_l}$, which we will denote by
 $p_l$.
Given $l<k$,
 let $x_l$ denote $r_{i_l}(u)$ and let $y_l$ denote $r_{j_l}(v)$.
Note that $\vp''(x_l)=\vp''(y_l)$.
Since $p_l\ne\emptyset$, it follows that 
$w_{x_l}\tl p_l\tle u(m_l)$ and $w_{y_l}\tl p_l\tle v(n_l)$.
Therefore, $x_l\prec p_l\preceq u(m_l)$ and $y_l\prec p_l\preceq v(n_l)$.

Now $\vp''(x_0)=\vp''(y_0)=\emptyset$, so $W$ mixes $x_0$ and $y_0$, by Fact \ref{claim.4.17} (2).
Then Lemma \ref{lem.phiknows} implies that $W$ mixes $r_{m_0+1}(u)$ and $r_{n_0+1}(v)$, since $p^u_{i_0}=p^v_{j_0}$ (and hence $u(i_0)$ and $v(j_0)$ are $\prec$-above both $x_0$ and $y_0$).
Then for all $m\in(i_0+1,i_1]$ and all $n\in(j_0+1,j_1]$, $\vp''(r_m(u))=\vp''(x_0)=\vp''(y_0)=\vp''(r_n(v))$, so Fact \ref{claim.4.17} (2)  implies that $W$ mixes $x_1$ and $y_1$.
Continuing in this manner, one proves by induction on $l<k$ that $W$ mixes $r_{m_k+1}(u)$ and $r_{n_k+1}(v)$.
Since $\vp''(u)=\vp''(r_{i_{k-1}+1}(u))=\vp''(r_{j_{k-1}+1}(v))=\vp''(v)$, apply Fact \ref{claim.4.17} (2) as necessary to conclude that $W$ mixes $u$ and $v$.
In particular, if $u,v\in\mathcal{F}|W$, then  $W$ mixing $u$ and $v$  implies that $u\, R\, v$.
\end{proof}

%Given an inner map $\vp$ on $\mathcal{F}$, for each $u\in\mathcal{F}$, %let $l_u=|\vp(u)|$ and
%list the members of $\vp(u)$ in $\prec$-increasing order as $\lgl p^u_i:i< %l_u\rgl$.
%For each $i$, take $w^u_i\in u$ to be the $\prec$-least such that %$w^u_i\tre p^u_i$.
%Then $\lgl w^u_i:i< l_u\rgl$ is also $\prec$-increasing,
 %since each $p^u_i\preceq w_i^u$ and also 
%$p^u_i\prec p^u_{i+1}$ implies that there is a $w\tre p^u_i$ such that %$w\prec p^u_{i+1}$, so $w^u_i\prec p^u_{i+1}$.

We point out the following  useful observation:
For $u\in\mathcal{F}|W$ and $i<|u|$, if $\pi_{r_i(u)}(u(i))\ne\emptyset$,
then $\pi_{r_i(u)}(u(i))\not\in \widehat{r_i(u)}$.
Thus, $\vp''(r_i(u))=\vp''(u)\cap\widehat{r_i(u)}$.

\begin{claim}\label{claim.phiNW}
$\vp''$ is Nash-Williams on $\mathcal{F}|W$.
\end{claim}

\begin{proof}
Let $u,v\in\mathcal{F}|W$ and suppose $m<|u|-1$ is maximal such that $\vp''(r_m(u))=\vp''(v)$ but $\vp''(r_{m+1}(u))\ne\vp''(v)$.
Then $P_{r_m(u)}\ne\{w_{r_m(u)}\}$.
Letting $k=|\vp''(u)|$ and $l=|\vp''(v)|$,
 and $\lgl p^u_i:i<k\rgl$ and $\lgl p^v_i:i<l\rgl$ being their $\prec$-increasing enumerations, 
we see that 
$k>l$,
$\lgl p^u_i:i<l\rgl=\lgl p^v_i:i<l\rgl$,
 and   $p_{l-1}^v=p_{l-1}^u\prec p_{l}^u$.

Let $n$ be maximal such that $r_n(v)\prec p^u_l$.
Then $\vp''(r_n(v))=\vp''(v)$, and if $n<|v|$, then $\pi_{r_n(v)}=\pi_{\emptyset}$.
Since $\vp''(r_m(u))=\vp''(r_n(v))$,  Claim \ref{claim4.19} implies that $W$ mixes  $r_m(u)$ and $r_n(v)$.
By Fact \ref{claim.4.17} (1), $W$ mixes $r_n(v)$ and $r_m(u)\cup \{w\}$ for at most one $\pi_{r_m(u)}$-projection class of $w$'s in $W_{r_m(u)}$. 
Since $p^u_l \succ r_n(v),r_m(u)$,  letting $d=\depth_W(r_n(v),r_m(u))$,
 there is a $W'\in [d,W]$
 such that   the least member in $W'$ above $r_n(v)$ and $r_m(u)$ is also $\prec$-above $p_l^u$.
By Lemma \ref{lem.hered}, we may assume that we already thinned $W$ to  satisfy that no 1-extension of  $r_m(u)$ into $W$ is mixed with $r_n(v)$.
But then $W$ separates $r_n(v)$ and $r_m(u)$, a contradiction.
\end{proof}

\begin{claim}\label{claim.miximpliesphi=}
For all  $u,v\in\mathcal{F}|W$, if 
$u\, R\, v$, then 
 $\vp''(u)=\vp''(v)$.
\end{claim}

\begin{proof}
Let $u,v\in\mathcal{F}|W$ and suppose that $\vp''(u)\ne\vp''(v)$.
Let $\lgl p^u_i:i< |\vp''(u)| \rgl$ and $\lgl p^v_i:i< |\vp''(v)| \rgl$ enumerate $\vp''(u)$ and $\vp''(v)$ in $\prec$-increasing order.
Since  by Claim \ref{claim.phiNW} $\vp''$ is Nash-Williams,
neither of 
$\vp''(u)$, $\vp''(v)$ end-extends the other;
so let  $j<\min(|\vp''(u)|,|\vp''(v)|)$ be least such that $p^u_j\ne p^v_j$.
Let $m_j,n_j$ be the integers such that $\pi_{r_{m_j}}(u(m_j))=p^u_j$ and $\pi_{r_{n_j}}(v(n_j))=p^v_j$.

If both $u(m_j)$ and $v(n_j)$ are $\prec$-above both $r_{m_j}(u)$ and $r_{n_j}(v)$, then Lemma \ref{lem.phiknows}  along with $p^u_j\ne p^v_j$ imply that $r_{m_j+1}(u)$ and $r_{n_j+1}(v)$  are separated by $W$; and hence, $u\not\hskip-.06in R\, v$.
Otherwise, without loss of generality, suppose $u(m_j)\preceq r_{n_j}(v)$.
Take $n$ least such that $v(n)\succeq u(m_j)$.
Then $n< n_j$, $r_n(v)\prec u(m_j)$, and $v(n)\succ r_{m_j}(u)$.
Assume that $W$ mixes $r_n(v)$ and $r_{m_j}(u)$, for if not, then already 
we have $u\not\hskip-.06in R\, v$.
Since $n< n_j$, $\pi_{r_n(v)}(v(n))$ cannot equal $p^u_j$.
By Lemma \ref{lem.phiknows}, $W$ separates $r_n(v)$ and $r_{m_j}(u)$; hence  $u\not\hskip-.06in R\, v$.
\end{proof}

Claims  \ref{claim4.19}  and \ref{claim.miximpliesphi=} 
yield  that  $\vp''$ canonizes the equivalence relation $R$ on $\mathcal{F}|W$; that is, for
 each pair $u,v\in\mathcal{F}|W$,
$u\, R\, v$ if and only if $\vp''(u)=\vp''(v)$.

Next, we show that $\vp$ holds the same information about $R$  that $\vp''$ does. 
The following observation will be useful:
For  each $u\in\hat{\mathcal{F}}|W$,
\begin{equation}\label{eq.obs}
\pi_{r_0(u)}(u(0))\preceq u(0)\prec \pi_{r_1(u)}(u(1))\preceq u(1)\prec\cdots.
\end{equation}
Recall that for any two sets $D,E$, $D\triangle  E$ denotes their symmetric difference, $(D\setminus E)\cup(E\setminus D)$.

\begin{lem}\label{lem.vp''=vp}
For $u,v\in \hat{\mathcal{F}}|W$, $\vp''(u)=\vp''(v)$ if and only if $\vp(u)=\vp(v)$.
Therefore, for $u,v\in \mathcal{F}|W$, $u\, R\, v$ if and only if $\vp(u)=\vp(v)$.
\end{lem}

\begin{proof}
Since $\vp(u)$ is the set of all maximal nodes in  $\vp''(u)$, the forward direction is trivial.
So now assume that $\vp''(u)\ne\vp''(v)$.
Let $w$ denote the $\prec$-least member of $\vp''(u)\triangle \vp''(v)$, and without loss of generality, assume that $w\in \vp''(u)\setminus \vp''(v)$.
Let $i<|u|$ and $j<|v|$ be maximal such that $r_i(u)\prec w$ and $r_j(v)\prec w$.
Then $w\preceq u(i)$ and $w\preceq v(j)$.

Since  $w$ is $\prec$-minimal in $\vp''(u)\triangle \vp''(v)$, $w\in\vp''(u)$, 
and $r_i(u)$ is $\prec$-maximal below $w$, it follows from the observation 
(\ref{eq.obs}) that $w=\pi_{r_i(u)}(u(i))$.
Hence, 
the set $\{\pi_{r_l(u)}(u(l))\prec w:l<|u|\}$ equals $\vp''(r_i(u))$.
Let $w'$ denote $\pi_{r_j(v)}(v(j))$.
Then $w'$ is the $\prec$-least member of $\vp''(v)$ satisfying $w\prec w'$, so by observation (\ref{eq.obs}), we see that
 $\{\pi_{r_l(v)}(v(l))\prec w:l<|v|\}=\vp''(r_j(v))$.
Therefore,  $\vp''(r_i(u))=\vp''(r_j(v))$, since $w$ is  the $\prec$-least member of $\vp''(u)\triangle \vp''(v)$.

Note that $r_j(v)\prec w=\pi_{r_i(u)}(u(i))\preceq u(i)$ and $r_i(u)\prec w\prec w'\preceq v(j)$,
so $u(i)\succ r_j(v)$ and $v(j)\succ r_i(u)$.
Since $\vp''(r_i(u))=\vp''(r_j(v))$, Claim \ref{claim4.19} implies that $W$ mixes $r_i(u)$ and $r_j(v)$.  Hence, $P_{r_i(u)}$ and $P_{r_j(v)}$ are equal on $W$ above $r_i(u)$ and $r_j(v)$, by the proof of Lemma \ref{lem.phiknows}.
Since $P_{r_i(u)}$ is a uniform projection, $w\ne w'$ implies that $w$ and $w'$ are $\tle$-incomparable.
Hence, $w\not\tle v(j)$.
This along with the fact that $r_j(v)\prec w$ (and hence $w\not\in\widehat{r_j(v)}$) imply that
 $w\not\in\widehat{r_{j+1}(v)}$.
For $j< l<|v|$,  $\min(v(l)\setminus w_{r_l(v)})=\min(v(l)\setminus r_l(v))\ge \max(w')>\max(w)$.
Thus,
 $\pi_{r_j(v)}(v(j))\not\tre w$, since $w\not\in\widehat{r_{j+1}(v)}$ and every new number in $v(j)\setminus \widehat{r_{j+1}(v)}$ is strictly greater than $\max(w)$.
Hence, $w\not\in\widehat{\vp''(v)}$, which implies that
$\vp(u)\setminus \vp(v)\ne\emptyset$.
Thus, we have proved that whenever $\vp''(u)\ne\vp''(v)$, then also $\vp(u)\ne\vp(v)$, which proves the first statement of the lemma.
By Claims 
\ref{claim4.19} and
\ref{claim.miximpliesphi=}, $\vp$ canonizes $R$ on $\mathcal{F}|W$.
\end{proof}

\begin{fact}\label{fact.vpNW}
 $\vp$ is  Nash-Williams on $\mathcal{F}|W$.
\end{fact}

\begin{proof}
If $u,v\in\mathcal{F}|W$ and there is some $j<|v|$ such that $\vp(u)=\vp(v)\cap\widehat{r_j(v)}$,
then noting that this implies that $\vp(u)=\vp(r_j(v))$, 
Lemma \ref{lem.vp''=vp} implies that
$\vp''(u)=\vp''(r_j(v))$.
Since $\vp''$ is Nash-Williams, by Claim  \ref{claim.phiNW},
$\vp''(u)$ and $\vp''(v)$ must be equal.
Then  Lemma \ref{lem.vp''=vp} implies that $\vp(u)=\vp(v)$.
\end{proof}

The next lemma will conclude the proof of the Ramsey-classification Theorem.
Essentially, the property $(*)$ follows from taking $W'$ thin enough below $W$ and then applying Claim \ref{claim4.19}  and Lemma \ref{lem.phiknows}.

\begin{lem}\label{vpinner}
 $\vp$ is irreducible.
\end{lem}

\begin{proof}
Since  $\vp$ is inner, it suffices to show that $\vp$ satisfies property $(*)$.
Take $W'\le W$ thin enough so that the following holds:
There are $n(0,0)<\dots<n(0,l_0)$, where $l_0=\lh(W'(0))-1$, such that for each $i\le l_0$, $W(n(0,i))\prec W'(0)$ and $\lh (W(n(0,i))\wedge W'(0))=i$.
In general, we require $W'$ thin enough that for each $k\ge 1$,
there are $n(k,m_k)<\dots<n(k,l_k)$, where $m_k=\lh(w_{r_k(W)})$ and  $l_k=\lh(W'(k))-1$, such that for each $m_k\le i\le l_k$, $W'(k-1)\prec W(n(k,i))\prec W'(k)$ and $\lh (W(n(k,i))\wedge W'(k))=i$.

Let $u\in\mathcal{F}|W'$ and build $v\in\mathcal{F}|W$ as follows.
If $\pi_{r_0(u)}(u(0))=u(0)$, then let $v(0)=u(0)$.
Otherwise,
take $v(0)\prec u(0)$  in $W$ such that $v(0)\wedge u(0)=\pi_{r_0(u)}(u(0))$.  
Since $P_{r_0(u)}$ is a uniform projection and $v(0)\tre \pi_{r_0(u)}(u(0))$, it follows that 
$\pi_{r_0(u)}(v(0))=\pi_{r_0(u)}(u(0))$.
Thus, in both cases, letting $r_1(v)$ denote $\{v(0)\}$, we see that $\vp(r_1(v))=\vp(r_1(u))$ and  $r_1(v) \wedge u=\vp(r_1(u))$.

Suppose that $n\le |u|$ and we have chosen $r_n(v)=\{v(0),\dots,v(n-1)\}$ so that 
for each $i<n$, $\pi_{r_i}(v(i))=\pi_{r_i}(u(i))$ and $v(i)\wedge u=\pi_{r_i}(u(i))$.
Thus, $\vp(r_n(v))=\vp(r_n(u))= r_n(v)\wedge u$.
There are three possibilities.

If $r_n(v)\in\mathcal{F}$, then let $v=r_n(v)$.
By the induction hypothesis,
$\vp(v)=\vp(r_n(u))=v\wedge u$.  Since $\vp$ is Nash-Williams,  $\vp(u)$ must equal $\vp(v)$.

If  $n<|u|$ and $r_n(v)\not\in\mathcal{F}$,
recall that since $\vp(r_n(v))=\vp(r_n(u))$,
 the proof of Lemma \ref{lem.phiknows} shows that $\pi_{r_n(v)}=\pi_{r_n(u)}$.
Thus, choose $v(n)\tr w_{r_n(v)}$ so that
$u(n-1)\prec v(n)\prec u(n)$,
 $\pi_{r_n(v)}(v(n))=\pi_{r_n(u)}(u(n))$,
and $v(n)\wedge u=\pi_{r_n(v)}(v(n))$.
(This last condition is possible by taking $v(n)$ so that $\min (v(n)\setminus \pi_{r_n(v)}(v(n)))$ is not in $u$.)
Then   $\vp(r_{n+1}(v))=\vp(r_{n+1}(u))=r_{n+1}(v)\wedge u$.

If  $n=|u|$ and  $r_n(v)\not \in\mathcal{F}$,
let $w_*$ denote $w_{r_n(v)}\wedge u$.
Note that $w_*$ is in $\widehat{r_n(v)}\wedge \hat{u}$, since $w_{r_n(v)}\in \widehat{r_n(v)}$.
Thus, 
by  choosing any $v(n)\tr w_{r_n(v)}$ with $\min(v(n)\tr w_{r_n(v)})>\max(u\cup r_n(v))$, we obtain an $r_{n+1}(v)=r_n(v)\cup\{v(n)\}$
satisfying $v(n)\wedge u=w_*$.
Thus, $r_{n+1}(v)\wedge u=r_n(v)\wedge u$, which by the induction hypothesis is equal to $\vp(u)$ and $\vp(r_n(v))$.  
Since $\vp$ is Nash-Williams, any 
 extension of $r_n(v)$ to some $v\in\mathcal{F}$ will have $\vp(v)=\vp(r_n(v)))$.
Hence, also $\vp(r_{n+1}(v))=\vp(r_n(v))=\vp(u)$.
\end{proof}

Thus, $\vp$ is an irreducible map canonizing $R$ on $\mathcal{F}|W$.
Thus concludes the proof of Theorem \ref{thm.rc}.
\end{proof}

%************END of canon thm
%********

This section concludes with proving that the irreducible map in Theorem \ref{thm.rc} is unique.  First, some useful observations.

\begin{fact}
$u(i)\wedge \vp(u)\tr w_{r_i(u)}$ if and only if $\pi_{r_i(u)}(u(i))\ne\emptyset$.
\end{fact}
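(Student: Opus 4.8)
The plan is to prove the biconditional one direction at a time, writing $p^u_i:=\pi_{r_i(u)}(u(i))$ as in Facts~\ref{facts.superusefulstructureresults} (so $\vp'(u)=\{p^u_n:n<|u|\}$ and $\vp(u)$ is the set of $\tle$-maximal nodes of $\vp'(u)$); throughout $u\in\hat{\mathcal{F}}|W$, $i<|u|$, and the $P_v$ are the projections furnished by Claim~\ref{claim.mix1}. The easy direction is $p^u_i\ne\emptyset\Rightarrow u(i)\wedge\vp(u)\tr w_{r_i(u)}$: Facts~\ref{facts.superusefulstructureresults}(1) gives $p^u_i\tr w_{r_i(u)}$, the definition of the projection map (Definition~\ref{defn.uniformproj}) gives $p^u_i\tle u(i)$, and since $p^u_i$ is a nonempty member of $\vp'(u)$ it satisfies $p^u_i\tle p$ for some $p\in\vp(u)$; hence $p^u_i\in\widehat{\{u(i)\}}\cap\widehat{\vp(u)}$, and so $u(i)\wedge\vp(u)\tre p^u_i\tr w_{r_i(u)}$.

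For the converse I would argue the contrapositive: assume $p^u_i=\emptyset$, equivalently $P_{r_i(u)}=\{w_{r_i(u)}\}$, and deduce $u(i)\wedge\vp(u)\tle w_{r_i(u)}$. The crucial first step is a propagation upward: $p^u_n=\emptyset$ for every $n$ with $i\le n<|u|$. For the inductive step, if $p^u_n=\emptyset$ and $n+1<|u|$, then $\vp'(r_{n+1}(u))\setminus\{\emptyset\}=\vp'(r_n(u))\setminus\{\emptyset\}$, so $W$ mixes $r_n(u)$ and $r_{n+1}(u)$ by Fact~\ref{claim.4.17}(2); then $P_{r_n(u)}=\{w_{r_n(u)}\}$ together with Claim~\ref{claim.bothE_0} forces $P_{r_{n+1}(u)}=\{w_{r_{n+1}(u)}\}$, i.e.\ $p^u_{n+1}=\emptyset$. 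Consequently $I_u\sse\{0,\dots,i-1\}$, and since $\widehat{\vp(u)}=\widehat{\vp'(u)}$ the node $u(i)\wedge\vp(u)$ is the longest initial segment $w$ of $u(i)$ with $w\tle p^u_n$ for some $n\in I_u$; but each such $p^u_n$ satisfies $p^u_n\tle u(n)$ with $u(n)\in r_i(u)$ (as $n<i$), so $w\tle u(i)\wedge u(n)\tle u(i)\wedge r_i(u)=w_{r_i(u)}$ (and $u(i)\wedge\vp(u)=\emptyset$ if $I_u=\emptyset$). This gives $u(i)\wedge\vp(u)\tle w_{r_i(u)}$, as needed.

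The hard part is the propagation step: it is not a formal consequence of Facts~\ref{facts.superusefulstructureresults}, and indeed for an arbitrary subtree of $\bW_B$ one could have $p^u_0=\emptyset$ while $p^u_1=u(1)$, which would violate the statement. What rescues it is that all the $P_v$ arise from the single equivalence relation $R$, so that Claim~\ref{claim.bothE_0} applies. If one prefers not to reach inside the proof of Lemma~\ref{lem.phiknows} for Claim~\ref{claim.bothE_0}, the same propagation can instead be read off from the assertion in the proof of Lemma~\ref{lem.vp''=vp} that $\vp''(r_i(u))=\vp''(r_j(v))$ forces $P_{r_i(u)}$ and $P_{r_j(v)}$ to agree on $W$ above those stems, applied with $v=u$, $j=i+1$.
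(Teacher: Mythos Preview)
The paper states this Fact without proof, so there is nothing to compare your argument against directly. Your proof is correct, and more to the point, your diagnosis of where the content lies is exactly right: the forward direction is immediate from Facts~\ref{facts.superusefulstructureresults}(1), but the converse is \emph{not} a purely combinatorial statement about the tree $\widehat{\vp(u)}$ --- it genuinely depends on the coherence of the projections $P_v$ with one another.

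Your counterexample sketch makes this clear. In $\mathcal{E}_{[\om]^2}$, for instance, $u(0)$ and $u(1)$ always share a nonempty stem, so $u(0)\wedge u(1)\tr\emptyset=w_{r_0(u)}$; if one were allowed to set $P_\emptyset=\{\emptyset\}$ while $P_{r_1(u)}\ne\{w_{r_1(u)}\}$, the Fact would fail at $i=0$. What rules this out is precisely the propagation you isolate: $P_{r_n(u)}=\{w_{r_n(u)}\}$ forces $\vp'(r_{n+1}(u))\setminus\{\emptyset\}=\vp'(r_n(u))\setminus\{\emptyset\}$, so Fact~\ref{claim.4.17}(2) gives mixing, and Claim~\ref{claim.bothE_0} then forces $P_{r_{n+1}(u)}=\{w_{r_{n+1}(u)}\}$. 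Reaching into the proof of Lemma~\ref{lem.phiknows} for Claim~\ref{claim.bothE_0} is entirely legitimate here, since the Fact is asserted in the context of the $W$ produced by that argument.

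One small streamlining of your final step: once $I_u\sse\{0,\dots,i-1\}$, every $p\in\vp(u)$ satisfies $p\tle u(n)$ for some $n<i$, so $u(i)\wedge p\tle u(i)\wedge r_i(u)=w_{r_i(u)}$ directly (using the identity $w_{r_i(u)}=u(i)\wedge r_i(u)$ recorded in the proof of Facts~\ref{facts.superusefulstructureresults}); there is no need to pass through $\widehat{\vp(u)}=\widehat{\vp'(u)}$ explicitly.
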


\begin{fact}\label{fact.trmaxl}
$\vp$ is $\tle$-maximal among projection maps canonizing $R$ on $\mathcal{F}|W$.
That is, if $\gamma$ is another projection map such that for all $u,v\in\mathcal{F}|W$, $\gamma(u)=\gamma(v)\llra u\, R\, v$,
then $\gamma(u)\tle\vp(u)$, for each $u\in\mathcal{F}|W$.
\end{fact}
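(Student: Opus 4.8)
The plan is to leverage property $(*)$ of the irreducible map $\vp$ proved in Lemma~\ref{vpinner}. That property says that, after passing to a thin enough subspace $W'\le W$, every $u\in\mathcal{F}|W'$ has a ``twin'' $v\in\mathcal{F}|W$ with $\vp(u)=\vp(v)=u\wedge v$: thus $v$ is $R$-equivalent to $u$ yet shares with $u$ \emph{exactly} the tree $\widehat{\vp(u)}=\hat u\cap\hat v$. Any competing projection map $\gamma$ that canonizes $R$ must send $u$ and $v$ to one and the same antichain $\gamma(u)=\gamma(v)$, and that antichain, being simultaneously a projection of $u$ and of $v$, is forced into $\hat u\cap\hat v=\widehat{\vp(u)}$. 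Since the $\tle$-maximal nodes of $\widehat{\vp(u)}$ are exactly $\vp(u)$, this reads $\gamma(u)\tle\vp(u)$.

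In detail I would first fix $W'\le W$ witnessing $(*)$ for $\vp$ (Lemma~\ref{vpinner}), note that $\vp$ canonizes $R$ on $\mathcal{F}|W$ by Theorem~\ref{thm.rc} and $\gamma$ does so by hypothesis, and observe that it suffices to prove $\gamma(u)\tle\vp(u)$ for $u\in\mathcal{F}|W'$ (replacing $W$ by $W'\le W$ changes nothing in the $\tle$-maximality assertion, as $\gamma$ still canonizes $R$ on $\mathcal{F}|W'$). Then, fixing $u\in\mathcal{F}|W'$, let $v\in\mathcal{F}|W$ be the twin furnished by the explicit recursion in the proof of Lemma~\ref{vpinner}, so $\vp(u)=\vp(v)=u\wedge v$. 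From $\vp(u)=\vp(v)$ and Theorem~\ref{thm.rc} we get $u\,R\,v$, hence $\gamma(u)=\gamma(v)$. Since $\gamma$ is a projection map, $\gamma(u)$ is an antichain contained in $\hat u$ and $\gamma(v)$ one contained in $\hat v$ (Definition~\ref{def.innerNW}), so $\gamma(u)=\gamma(v)\sse\hat u\cap\hat v$. The set $\hat u\cap\hat v$ is downward closed under $\tle$, so it equals $\widehat{u\wedge v}=\widehat{\vp(u)}$, whence every node of $\gamma(u)$ is an initial segment of a node of $\vp(u)$; that is exactly $\gamma(u)\tle\vp(u)$.

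The one real obstacle is that $\gamma$ is assumed only to be a projection map canonizing $R$ — not Nash-Williams, not satisfying $(*)$ — so the uniqueness Theorem~\ref{thm.irred} does not apply and cannot shortcut the argument; the proof must genuinely rest on the structural content of $(*)$, i.e.\ on the fact that $\widehat{\vp(u)}$ is realized as $\hat u\cap\hat v$ for an honest $v\in\mathcal{F}$ with $u\,R\,v$. The two bookkeeping points to check carefully are that this $v$ can be taken in $\mathcal{F}|W$ (which is what the construction in the proof of Lemma~\ref{vpinner} actually produces, since both the canonization of $\vp$ and that of $\gamma$ are only available on $\mathcal{F}|W$), and that $\hat u\cap\hat v$ is downward closed so that its maximal antichain is literally $u\wedge v$. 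Everything else is immediate from the definitions of ``projection of $T'$'' and of $u\wedge v$.
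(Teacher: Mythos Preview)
Your argument is correct, but it takes a different route from the paper's and yields a slightly weaker conclusion. The paper does not invoke property $(*)$ at all; instead it works node-by-node: if $\gamma(u)\cap\widehat{u(n)}$ strictly extends $\vp(u)\cap\widehat{u(n)}$ for some $n$, one picks a sibling $w'\in W$ with $\pi_{r_n(u)}(w')=\pi_{r_n(u)}(u(n))$ but $w'\wedge u(n)=\pi_{r_n(u)}(u(n))$, extends $r_n(u)\cup\{u(n)\}$ and $r_n(u)\cup\{w'\}$ to $y,y'\in\mathcal{F}|W$ with $\vp(y)=\vp(y')$, and observes that $\gamma(y)\ne\gamma(y')$ while $y\,R\,y'$, a contradiction. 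This gives the conclusion for every $u\in\mathcal{F}|W$, exactly as stated. Your approach via $(*)$ is cleaner and more conceptual, but it forces you to pass to the witness $W'\le W$ from Lemma~\ref{vpinner}, so you only obtain $\gamma(u)\tle\vp(u)$ for $u\in\mathcal{F}|W'$. That is enough for the sole application (the proof of Theorem~\ref{thm.irred}), though note that there the $W'$ chosen is the $(*)$-witness for $\gamma$, not for $\vp$; to combine the two one simply passes to a common refinement, which is harmless since both canonizations persist downward. So: your proof is valid modulo this shrinking, and arguably more transparent; the paper's proof is more local and delivers the fact verbatim on all of $\mathcal{F}|W$.
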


\begin{proof}
Let $u\in\mathcal{F}|W$, and suppose $n<|u|$ 
is such that $\gamma(u)\cap\widehat{u(n)}\ne\vp(u)\cap\widehat{u(n)}$.
Let $w$ denote $u(n)$.
Since $\gamma$ is a projection map, then either 
$\gamma(u)\cap\hat{w}\tl \vp(u)\cap \hat{w}$  or
$\gamma(u)\cap\hat{w}\tr \vp(u)\cap \hat{w}$.
Suppose toward a contradiction that $\gamma(u)\cap\hat{w}\tr \vp(u)\cap \hat{w}$.
Let $x$ denote $r_n(u)$.
Take $w'\in W$ such that $x\cup\{w'\}\in r_{n+1}[x,W]$
and $\pi_x(w')=\pi_x(w)$.
Then $x\cup\{w\}$ and $x\cup\{w'\}$ are mixed, so we may extend $x\cup \{w\}$ and $x\cup\{w'\}$ to some $y,y'\in\mathcal{F}|W$, respectively, such that $\vp(y)=\vp(y')$.
Then $y\, R\, y'$.
Now $\gamma(y)\cap \hat{w}\ne\gamma(y')\cap \hat{w}'$; so since $\gamma$ is a projection map, it follows that $\gamma(y)\ne\gamma(y')$, which contradicts that $\gamma$ canonizes $R$ on $\mathcal{F}|W$.
\end{proof}

We now prove that irreducible maps are unique, up to restriction below some member of the space.
\vskip.1in

\noindent \it Proof of Theorem \ref{thm.irred}. \rm
Let $\gamma$ be any irreducible map canonizing $R$ on $\mathcal{F}|W$.
Take $W'\le W$ witnessing  $(*)$ for $\gamma$.
Suppose toward a contradiction that there is a   $u\in\mathcal{F}|W'$
such that $\gamma(u)\ne \vp(u)$.
Then $\gamma(u)\tl \vp(u)$, by Fact \ref{fact.trmaxl}.
Take  a $v\in \mathcal{F}|W$ such that   $\gamma(u)=\gamma(v)=u\wedge v$.
Then $u\wedge v\tl \vp(u)$.
Since $\vp(u)$ and $\vp(v)$ are projections of $u$ and $v$, respectively, if $\vp(u)$ and $\vp(v)$ are to be equal, then they must both be contained in $u\wedge v$, which is not the case.
Thus, $\vp(u)\ne\vp(v)$, a contradiction.
\hfill$\square$

%****************************
%***************************
%*****************************

\section{Conclusion and Remarks}

We close this paper with a few brief remarks.
The canonical equivalence relations on fronts of the form $\mathcal{AE}^B_k$ are obtained as a corollary to Theorem \ref{thm.rc}.  We do not present it here, since, unlike the analogous situation for all other known topological Ramsey spaces, here we do not obtain simply a sequence of $k$ many canonical equivalence relations on the 1-extensions.  This happens because, fixing $i<k$, for $X,Y\in\mathcal{E}_B$,  $r_{i+1}[i,X]$ and $r_{i+1}[i,Y]$ may be isomorphic to $\mathcal{E}_C$ and $\mathcal{E}_D$ for very different uniform barriers $C$ and $D$.

We have shown in Theorem \ref{thm.irred} that the irreducible map $\vp$ in Theorem \ref{thm.rc} is unique; more to the point, the property $(*)$ is  sufficient for uniqueness among inner maps canonizing the equivalence relation $R$.
We ask whether the following property is sufficient for uniqueness:  We say that a projection map $\gamma$ on a front $\mathcal{F}$ on $\mathcal{E}_B$ is {\em strongly Nash-Williams} if for each pair $u,v\in\mathcal{F}$ and each $i\le \depth_W(u,v)$, 
$\gamma(u)\cap\widehat{r_i(W)}\tle\gamma(v)\cap\widehat{r_i(W)}$ 
 implies $\gamma(u)\cap\widehat{r_i(W)}=\gamma(v)\cap\widehat{r_i(W)}$.
If a projection map $\gamma$ canonizing $R$ on $\mathcal{E}_B|W$ is strongly Nash-Williams, is it necessarily equal to $\vp$?
We leave this question for future work.

We conclude by mentioning that the topological Ramsey spaces $\mathcal{E}_B$ and the Ramsey-classification Theorem \ref{thm.rc} form the basis for forthcoming work \cite{DobrinenE_BTukey15} on the Rudin-Keisler and Tukey structures of the associated ultrafilters $\mathcal{G}_B$.

%\end{document}

\bibliographystyle{ieeetr}

\bibliography{references}

\end{document}